\documentclass[11pt, reqno]{amsart}

\usepackage[utf8]{inputenc}
\usepackage[T2A]{fontenc}
\usepackage[english,ukrainian,russian]{babel}

\usepackage[english]{d-omega3}

\usepackage{amssymb}
\usepackage{graphicx}
\usepackage{xcolor}
\usepackage[all]{xy}
\usepackage{multicol}

%
%
%

%
%
%

%
%
%







\usepackage{graphicx}
\usepackage{mathtools}
\usepackage{amssymb}
\usepackage{amsmath, amsthm}
\usepackage{amscd}
\usepackage{enumitem}
\usepackage[all]{xy}

\usepackage{hyperref}
\hypersetup{hidelinks=true, hypertexnames=false}
\hypersetup{
    hypertexnames=false,
    colorlinks,
    linkcolor={red!30!black},
    citecolor={blue!50!black},
    urlcolor={blue!80!black}
}







\newcommand\bC{{\mathbb C}}
\newcommand\DDD{{\mathbb D}}
\newcommand\bN{{\mathbb N}}

\newcommand\bR{{\mathbb R}}

\newcommand\bZ{{\mathbb Z}}

\newcommand\FF{{\mathcal F}}

\newcommand\KK{{\mathcal K}}

\newcommand\NN{{\mathcal N}}

\newcommand\cov[1]{\tilde{#1}}
\newcommand\eps{\varepsilon}

\newcommand\id{\mathrm{id}}          
\newcommand\Int{\mathrm{Int}\,}        


\newcommand\Aman{A}
\newcommand\Bman{B}

\newcommand\Dman{D}
\newcommand\Eman{E}
\newcommand\Kman{K}
\newcommand\Lman{L}
\newcommand\Mman{M}
\newcommand\Nman{N}
\newcommand\Pman{P}
\newcommand\Qman{Q}
\newcommand\Rman{R}

\newcommand\Tman{T}
\newcommand\Uman{U}
\newcommand\Vman{V}
\newcommand\Wman{W}
\newcommand\Xman{X}
\newcommand\Yman{Y}
\newcommand\Zman{Z}

\newcommand\Circle{S^1}            





\newcommand\SO{\mathrm{SO}}


\newcommand\Orb{\mathcal{O}}        
\newcommand\Stab{\mathcal{S}}       
\newcommand\Diff{\mathcal{D}}       
\newcommand\Homeo{\mathcal{H}}      

\newcommand\StabId{\Stab_{\id}}     

\newcommand\Cr[1]{\mathcal{C}^{#1}}
\newcommand\Cinfty{\Cr{\infty}}
\newcommand\Crm[3]{\mathcal{C}^{#1}(#2,#3)}               
\newcommand\Cont[2]{\Crm{}{#1}{#2}}            
\newcommand\Ci[2]{\Crm{\infty}{#1}{#2}}               

\newcommand\func{f}
\newcommand\gfunc{g}
\newcommand\dif{h}
\newcommand\hdif{\cov{\dif}}
\newcommand\gdif{g}

\newcommand\qdif{q}


\newcommand\Morse{\mathrm{Morse}}



\newcommand\Stabilizer[1]{\Stab(#1)}             
\newcommand\StabilizerPlus[1]{\Stab^{+}(#1)}     
\newcommand\StabilizerMinus[1]{\Stab^{-}(#1)}     
\newcommand\StabilizerId[1]{\StabId(#1)}         

\newcommand\Orbit[1]{\Orb(#1)}                   

\newcommand\SingularSet[1]{\Sigma_{#1}}             

\newcommand\AutKRGraphStab[1]{\mathbf{G}}             






\newcommand\fSing{\SingularSet{\func}}                


\newcommand\regN[1]{R_{#1}}


\newcommand\crLev{K}

\newcommand\fld{F}
\newcommand\flow{\mathbf{F}}
\newcommand{\hflow}{\cov{\flow}}

\newcommand\gfld{G}
\newcommand\gflow{\mathbf{G}}

\newcommand\hfld{\cov{\fld}}

\newcommand\FolStab{\Delta} 

\newcommand\MFolStabilizer[1]{\FolStab^{-}(#1)}

\newcommand\ddx[2]{\tfrac{\partial#1}{\partial#2}}

\newcommand\halpha{\cov{\alpha}}






\newcommand{\tXman}{\tilde{\Xman}}

\newcommand\KRGraphf{\Gamma_{\func}}







\begin{document}

\newcommand{\comment}[1]{}

\newcommand{\kerr}{\operatorname{Ker}}
\newcommand{\aut}{\operatorname{Aut}}
\newcommand\St[1]{\mathcal{S}(#1)}
\newcommand\Sfdd{\St{f, \partial D^2}}
\newcommand\SfdM{\St{f, \partial\Mman}}
\newcommand\SfdMX{\St{f, \partial\Mman;X}}
\newcommand{\DD}[2]{D_{#1}^{#2}}
\newcommand{\ova}{\overline{A_i}}
\newcommand{\po}{\varphi_1}
\newcommand{\pot}{\varphi_2}

\newcommand{\Com}{\left[ G\mathop{\wr}\limits_n\bZ, G\mathop{\wr}\limits_n\bZ \right]}
\newcommand{\com}{\left[ G\wr_n\bZ, G\wr_n\bZ \right]}

\newcommand{\pd}{\partial\Mman}
\newcommand{\ha}{\widehat{A}}
\newcommand{\gha}{\widehat{g}}
\newcommand{\delm}{\Delta^-(f)}
\newcommand{\eks}{(\Circle\times N)\setminus L}
\newcommand{\wh}{\hdif}
\newcommand{\n}{(\bR\times n)\setminus\widetilde{L}}
\newcommand\invp[1]{({#1},+)}
\newcommand\invm[1]{({#1},-)}

\newcommand\sq{\mathsf{sq}}
\newcommand\shift{\sigma}

\newcommand\CrCircle[2]{C^{#1}_{#2}(\Circle,\Circle)}
\newcommand\CCircle[1]{\CrCircle{}{#1}}
\newcommand\pz{z}
\newcommand\px{x}
\newcommand\pt{t}
\newcommand\ps{s}
\newcommand\py{y}
\newcommand\prj{p}
\newcommand\tnp{TNP}
\newcommand\tnpz[1]{$\tnp(#1)$}

\newcommand\regNV{\Rman_{\Vman}}
\newcommand\regZ{\regN{\Zman}}

\newcommand\FMP{\mathcal{F}(\Mman,\Pman)}

\newcommand\tLman{\widetilde{\Lman}}
\newcommand\tYman{\widetilde{\Yman}}
\newcommand\Gx{L_{\px}}
\newcommand\tGx{\widetilde{L}_{\px}}

\newcommand\vk{k}
\newcommand\vl{l}
\newcommand\chp{\NN}
\newcommand\SingFol{\Xi_{\func}}
\newcommand\SGman{\Tman}
\newcommand\str[1]{\sigma_{#1}}
\newcommand\bs[2]{\mathfrak{b}_{#1}(#2)}

\newcommand\ori{\mathsf{or}}
\newcommand\minv[1]{#1^{-}}
\newcommand\pinv[1]{#1^{+}}

\newcommand\HZ{\Homeo_{cf}(\Zman)}
\newcommand\HY{\Homeo_{cf}(\Yman)}
\newcommand\jind{{\color{red}j}}

\newcommand\Ji[1]{I_{#1}}
\newcommand\tJi[1]{\widetilde{I}_{#1}}

\newcommand\tHsp{\mathbb{H}} 
\newcommand\oHsp{\Int(\mathbb{H})} 

\newcommand\chpmm{\chp^{-}}
\newcommand\chppp{\chp^{+}}
\newcommand\Kmm{\Kman^{-}}
\newcommand\Kpp{\Kman^{+}}
\newcommand\Rmm{\regN{\Kmm}}
\newcommand\Rpp{\regN{\Kpp}}

\title{Reversing orientation homeomorphisms of surfaces}
\author{Iryna Kuznietsova, Sergiy Maksymenko}
\address{Topology Laboratory, Department of Algebra and Topology, Institute of Mathematics of NAS of Ukraine, Tereshchenkivska str. 3, Kyiv, 01601, Ukraine}
\email{maks@imath.kiev.ua, kuznietsova@imath.kiev.ua}

\keywords{Diffeomorphism, Morse function, dihedral group}

\abstract{english}{Let $M$ be a connected compact orientable surface, $f:M\to \mathbb{R}$ be a Morse function, and $h:M\to M$ be a diffeomorphism which preserves $f$ in the sense that $f\circ h = f$.
We will show that if $h$ leaves invariant each regular component of each level set of $f$ and reverses its orientation, then $h^2$ is isotopic to the identity map of $M$ via $f$-preserving isotopy.

\apar
This statement can be regarded as a foliated and a homotopy analogue of a well known observation that every reversing orientation orthogonal isomorphism of a plane has order $2$, i.e. is a mirror symmetry with respect to some line.
The obtained results hold in fact for a larger class of maps with isolated singularities from connected compact orientable surfaces to the real line and the circle.
}

\shortAuthorsList{I.~Kuznietsova, S.~Maksymenko}


\maketitle

\section{Introduction}
The present paper describes several \myemph{foliated} and \myemph{homotopy} variants of a ``rigidity'' property for reversing orientation linear motions of the plane claiming that \myemph{every such motion has order $2$}.
Though it is motivated by study of deformations of smooth functions on surfaces (and we prove the corresponding statements), the obtained results seem to have an independent interest.

Let $\DDD_n =\{ r,s \mid r^n = s^2 = 1,\, rs = s r^{-1} \}$ be the \myemph{dihedral} group, i.e. group of symmetries of a right $n$-polygon.
Then each ``reversing orientation'' element is written as $r^k s$ for some $k$ and has order $2$:
\[
    (r^k s)^2 = r^k s r^k s = r^k r^{-k} s s = 1.
\]

More generally, let
$
    SO^{-}(2) =
    \left\{
        \left(\begin{smallmatrix}
            \sin t & \cos t \\
            \cos t & -\sin t
        \end{smallmatrix}
        \right) \mid t\in[0;2\pi)
    \right\}
$
be the \myemph{set} of reversing orientation orthogonal maps of $\bR^2$, i.e.~the adjacent class of $O(2)/SO(2)$ distinct from $SO(2)$.
Then again one easily checks that each element of $SO^{-}(2)$ has order $2$.

Another counterpart of this effect is that every motion of $\bR$ which reverses orientation is given by the formula: $f(x) = a-x$ for some $a\in\bR$, and therefore it has order $2$, i.e. $f(f(x))=x$.

Notice that such a property does not hold for ``non-rigid motions'', like arbitrary homeomorphisms or diffeomorphisms of $\bR$ or $\Circle$.
For example, let $\dif:\bR\to\bR$ be given by $\dif(x)=-x^3$.
It reverses orientation, but $\dif(\dif(x))= -(-x^3)^3=x^9$, and therefore it is not the identity.

Nevertheless, counterparts of the above rigidity effects for homeomorphisms can still be obtained on a ``homotopy'' level.

For instance, let $\Homeo(\Circle)$ be the group of all homeomorphisms of the circle $\Circle$ and $\Homeo^{+}(\Circle)$, (resp. $\Homeo^{-}(\Circle)$), be the subgroup (resp. subset) consisting of homeomorphisms preserving (resp. reversing) orientation.
Endow these spaces with compact open topologies.
Notice that we have a natural inclusion $O(2) \subset \Homeo(\Circle)$, which \myemph{consists of two inclusions} $SO^{-}(2) \subset \Homeo^{-}(\Circle)$ and $SO(2) \subset \Homeo^{+}(\Circle)$ between the corresponding path components.
It is well known and is easy to see $SO^{-}(2)$ (resp. $SO(2)$) is a strong deformation retract of $\Homeo^{-}(\Circle)$ (resp. $\Homeo^{+}(\Circle)$).
This implies that the map $\sq:\Homeo^{-}(\Circle) \to \Homeo^{+}(\Circle)$ defined by $\sq(\dif) = \dif^2$ is null homotopic.

The aim of the present paper is to prove a parametric variant of the above ``rigidity'' statements for self-homeomorphisms of open subsets of topological products $\Xman\times\Circle$ preserving first coordinate (Theorem~\ref{th:shift_func_XS1_L}).
That result will be applied to diffeomorphisms preserving a Morse function on an orientable surface and reversing certain regular components of some of its level-sets (Theorems~\ref{th:sq_Sminus_Sid}, \ref{th:exist:2disk}, and~\ref{th:exist:g_rev_or}).

\section{Preliminaries}



\subsection{Action of the groups of diffeomorphisms}
Let $\Mman$ be a compact surface and $\Pman$ be either the real line $\bR$ or the circle $\Circle$ and $\Diff(\Mman)$ be the group of $\Cinfty$ diffeomorphisms of $\Mman$.
Then there is a natural right action of $\Diff(\Mman)$ on the space of smooth maps $\Ci{\Mman}{\Pman}$ defined by the following rule: $(\dif,\func)\mapsto \func\circ\dif$, where $\dif\in\Diff(\Mman)$, $\func\in \Ci{\Mman}{\Pman}$.
Let
\begin{align*}
  \Stabilizer{\func} &= \{\dif\in \Diff(\Mman) \mid \func\circ\dif = \func\}, &
  \Orbit{\func} &= \{ \func\circ\dif  \mid \dif\in\Diff(\Mman)\},
\end{align*}
be the \myemph{stabilizer} and the \myemph{orbit} of $\func\in \Ci{\Mman}{\Pman}$ with respect to the above action.
It will be convenient to say that elements of $\Stabilizer{\func}$ \myemph{preserve} $\func$.
Endow the spaces $\Diff(\Mman)$, $\Ci{\Mman}{\Pman}$ with Whitney $\Cinfty$-topologies, and their subspaces $\Stabilizer{\func}$, $\Orbit{\func}$ with induced ones.
Denote by $\StabilizerId{\func}$ the identity path component of $\Stabilizer{\func}$ consisting of all $\dif\in\Stabilizer{\func}$ isotopic to $\id_{\Mman}$ by some $\func$-preserving isotopy.

For orientable $\Mman$ we denote by $\Diff^{+}(\Mman)$ the group of its orientation preserving diffeomorphisms.
Also for $\func\in\Ci{\Mman}{\Pman}$ we put
\begin{align*}
\StabilizerPlus{\func} &= \Diff^{+}(\Mman) \cap \Stabilizer{\func}, &
\StabilizerMinus{\func} &= \Stabilizer{\func} \setminus \StabilizerPlus{\func},
\end{align*}
so $\StabilizerMinus{\func}$ is another adjacent class of $\Stabilizer{\func}$ by $\StabilizerPlus{\func}$ distinct from $\StabilizerPlus{\func}$.

\subsection{Homogeneous polynomials on $\bR^2$ without multiple factors}
It is well known and is easy to prove that every homogeneous polynomial $\func:\bR^2\to\bR$ is a product of finitely many linear and irreducible over $\bR$ quadratic factors: $\func = \prod\limits_{i=1}^{l}L_i \cdot \prod\limits_{j=1}^{q}Q_j$, where $L_i(\px,\py) = a_i \px + b_i \py$ and $Q_j(\px,\py) = c_i \px^2 + d_i \px\py + e_i \py^2$.

Suppose $\deg\func\geq2$.
Then the origin $0$ is a unique critical point of $\func$ if and only if $\func$ has no multiple (non-proportional each other) linear factors.
Structure of level sets of $\func$ near $0$ is shown in Figure~\ref{fig:level_sets}.
\begin{figure}[htbp!]\footnotesize
\begin{tabular}{ccccccc}
\includegraphics[height=1.4cm]{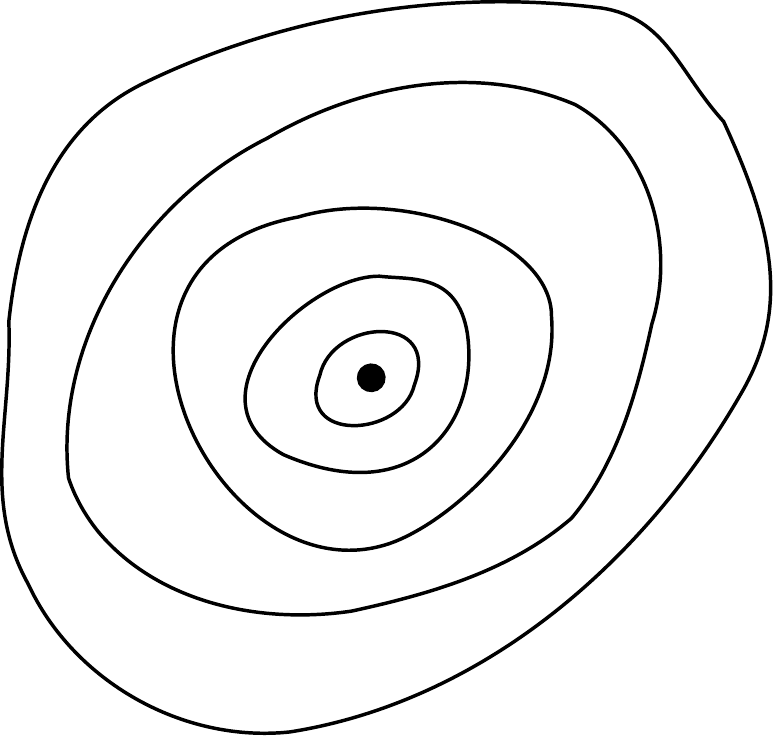}  & 
\includegraphics[height=1.4cm]{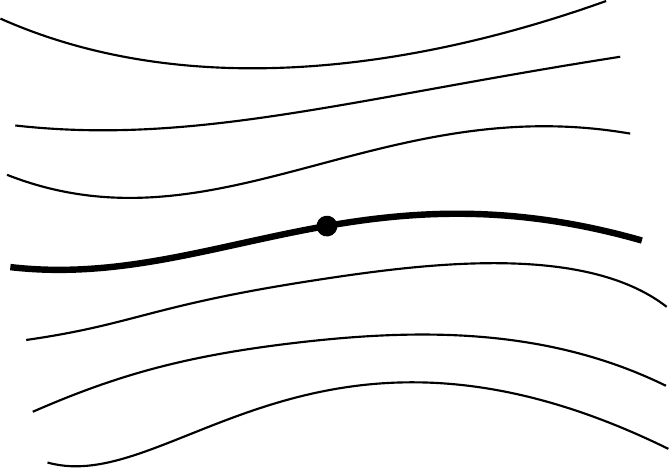}    & 
\includegraphics[height=1.4cm]{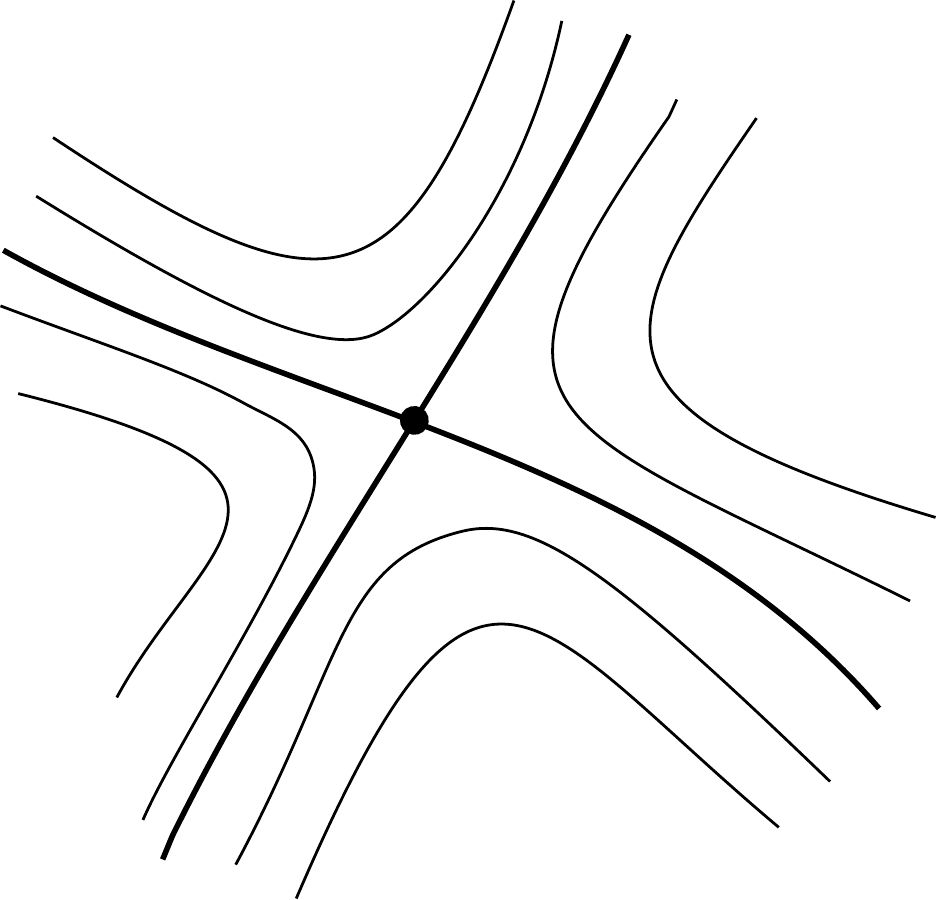}    & 
\includegraphics[height=1.4cm]{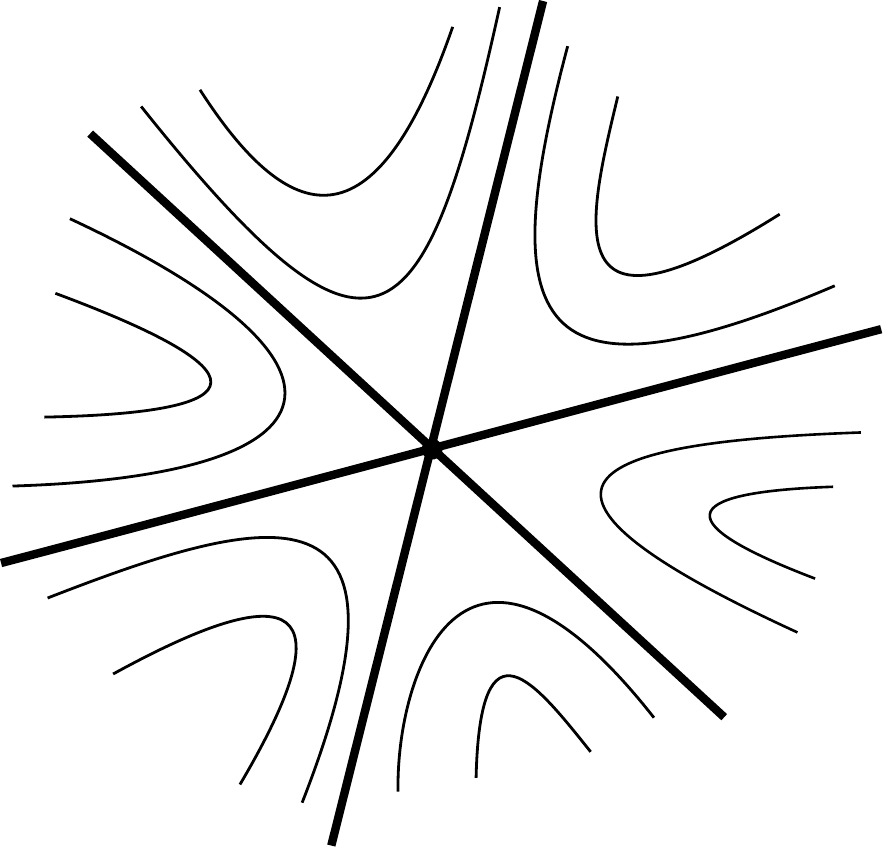}    \\
a) local extreme  &
b) quasi-saddle   &
c) non-degenerate &
d) saddle          \\
        &
        &
 saddle &
 $f = L_1 \cdots L_l Q_1\cdots Q_q$ \\
$f = Q_1\cdots Q_q$ &
$f = L_1 Q_1\cdots Q_q$ &
$f = L_1 L_2$ &
$l\geq 2$ \\
$l=0$, $q\geq 1$ &
$l=1$, $q\geq 1$ &
$l=2$, $q=0$ &

\end{tabular}
\caption{Topological structure of level sets of a homogeneous polynomial $\func:\bR^2\to\bR$ without multiple factors}
\label{fig:level_sets}
\end{figure}

We will restrict ourselves with the following space of smooth maps.

\begin{subdefinition}
Let $\FMP$ be the set of $\Cinfty$ maps $f:\Mman\to\Pman$ satisfying the following conditions:
\begin{enumerate}[leftmargin=*, label={\rm(A\arabic*)}]
\item\label{enum:F:bd}
The map $\func$ takes constant value at each connected component of $\partial\Mman$ and has no critical points in $\partial\Mman$.
\item\label{enum:F:hompol}
For every critical point $z$ of $f$ there is a local presentation $f_z\colon \bR^2 \to \bR$ of $\func$ near $z$ such that $f_z$ is a homogeneous polynomial $\bR^2 \to \bR$ without multiple factors.
\end{enumerate}
\end{subdefinition}

A map $\func\in \Ci{\Mman}{\Pman}$ will be called \myemph{Morse} if $\func$ satisfies~\ref{enum:F:bd} and all its critical points are non-degenerate.
Notice that due to Morse lemma each Morse map $\func$ satisfies the condition~\ref{enum:F:hompol} with homogeneous polynomials $f_z=\pm x^2\pm y^2$ for each critical point $z$.
Denote by $\Morse(\Mman,\Pman)$ the set of all Morse maps $\Mman\to\Pman$.
Then we have the following inclusion:
\[ \Morse(\Mman,\Pman)\subset \FMP \subset \Ci{\Mman}{\Pman}. \]
Then $\Morse(\Mman,\Pman)$ is open and everywhere dense in the subset of smooth functions from $\Ci{\Mman}{\Pman}$ satisfying the condition~\ref{enum:F:hompol}.
Hence $\FMP$ consists of \myemph{even more that typical maps $\Mman\to\Pman$}.

\subsection{Several constructions associated with $\func\in\FMP$.}
In what follows we will assume that $\func\in \FMP$.
Let $\fSing$ be the set of critical points of $f$.
Then condition~\ref{enum:F:hompol} implies that each $\pz\in\fSing$ is isolated.
In the case when $\Pman=\Circle$ one can also say about local extremes of $\func$, and even about local minimums or maximums if we fix an orientation of $\Circle$.

A connected component $\Kman$ of a level-set $f^{-1}(c)$, $c\in\Pman$, will be called a \myemph{leaf} (of $\func$).
We will call $\Kman$ \myemph{regular} if it contains no critical points.
Otherwise, it will be called \myemph{critical}.

For $\eps>0$ let $\Nman_{\eps}$ be the connected component of $f^{-1}[c-\varepsilon, c+\varepsilon]$ containing $\Kman$.
Then $\Nman_{\eps}$ will be called an \myemph{$\func$-regular neighborhood of $\Kman$} if $\eps$ is so small that $\Nman_{\eps}\setminus\Kman$ contains no critical points of $\func$   and no boundary components of $\partial\Mman$.

A submanifold $\Uman\subset\Mman$ will be called \myemph{$\func$-adapted} if $\Uman = \mathop{\cup}\limits_{i=1}^{a} \Aman_i$, where each $\Aman_i$ is either a regular leaf of $\func$ or an $\func$-regular neighborhood of some (regular or critical) leaf of $\func$.

\subsubsection{Graph of $\func$}
Let $\KRGraphf$ be the partition of $\Mman$ into leaves of $\func$, and $\prj:\Mman\to\KRGraphf$ be the natural map associating to each $x\in\Mman$ the corresponding leaf of $\func$ containing $\px$.
Endow $\KRGraphf$ with the quotient topology, so a subset (a collection of leaves) $\Uman\subset \KRGraphf$ is open iff $p^{-1}(\Uman)$ (i.e. their union) is open in $\Mman$.
It follows from axioms~\ref{enum:F:bd} and~\ref{enum:F:hompol} that $\KRGraphf$ has a natural structure of $1$-dimensional CW-complex, whose $0$-cells correspond to boundary components of $\Mman$ and critical leaves of $\func$.
It is known as Reeb or Kronrod-Reeb or Lyapunov graph of $\func$, see~\cite{Adelson-Welsky-Kronrode:DAN:1945, Reeb:CR:1946, Kronrod:UMN:1950, Franks:Top:1985}.
We will call $\KRGraphf$ simply the \myemph{graph} of $\func$.

The following statement is evident:
\begin{sublemma}\label{lm:KR_is_a_tree}
For $\func\in\FMP$ the following conditions are equivalent:
\begin{enumerate}[label={\rm(\alph*)}]
\item\label{enum:lm:KR_is_a_tree:1} every regular leaf of $\func$ in $\Int{\Mman}$ separates $\Mman$;
\item\label{enum:lm:KR_is_a_tree:2} the graph $\KRGraphf$ of $\func$ is a tree.
\end{enumerate}
For instance, those conditions hold if $\Mman$ is either of the surfaces: $2$-disk, cylinder, $2$-sphere, M\"obius band, projective plane.
\qed
\end{sublemma}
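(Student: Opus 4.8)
The plan is to derive the equivalence of \ref{enum:lm:KR_is_a_tree:1} and \ref{enum:lm:KR_is_a_tree:2} from one point-set fact about the projection $\prj\colon\Mman\to\KRGraphf$ together with one combinatorial fact about graphs. Recall that $\prj$ is a quotient map whose fibres are exactly the leaves of $\func$, and each leaf is connected by definition. First I would record the elementary lemma: \emph{if $q\colon X\to Y$ is a quotient map with nonempty connected fibres, then $X$ is connected if and only if $Y$ is}. Only ``$\Leftarrow$'' needs an argument: if $X=U\sqcup V$ with $U,V$ disjoint, nonempty and open, then each fibre, being connected, lies wholly in $U$ or wholly in $V$, so $U$ and $V$ are $q$-saturated; hence $q(U),q(V)$ are disjoint, nonempty, open and cover $Y$, contradicting connectedness of $Y$. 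Now fix a regular leaf $\Kman\subset\Int\Mman$. It is closed in $\Mman$, and $\Mman\setminus\Kman$ is open and $\prj$-saturated with $\prj(\Mman\setminus\Kman)=\KRGraphf\setminus\{\prj(\Kman)\}$ (here one uses $\prj^{-1}(\prj(\Kman))=\Kman$, since $\Kman$ is a fibre). Therefore the corestriction $\prj\colon\Mman\setminus\Kman\to\KRGraphf\setminus\{\prj(\Kman)\}$ is again a quotient map with connected fibres, and by the lemma \emph{$\Kman$ separates $\Mman$ if and only if $\prj(\Kman)$ separates $\KRGraphf$}.

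Next I would match the relevant leaves with the geometry of $\KRGraphf$. Since the $0$-cells of $\KRGraphf$ are precisely the boundary components of $\Mman$ and the critical leaves of $\func$, the preimages under $\prj$ of the points interior to the edges of $\KRGraphf$ are exactly the regular leaves of $\func$ contained in $\Int\Mman$. Hence \ref{enum:lm:KR_is_a_tree:1} says exactly that every point interior to some edge separates $\KRGraphf$. It then remains to use the purely combinatorial statement that \emph{a connected graph is a tree if and only if deleting any point interior to an edge disconnects it}: in a tree, were such a deletion to leave the graph connected, the two endpoints of that edge would be joined by an edge-path avoiding it, producing a cycle; conversely, if the graph contains a cycle, then choosing an edge of that cycle (or the loop itself, when the cycle is a single loop edge) its two endpoints remain joined after removing an interior point of it, so that point fails to disconnect the graph. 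Combining this with the previous paragraph yields \ref{enum:lm:KR_is_a_tree:1}$\Leftrightarrow$\ref{enum:lm:KR_is_a_tree:2}.

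For the last assertion, observe that a regular leaf $\Kman\subset\Int\Mman$ is a compact connected $1$-manifold without boundary along which $\func$ is a submersion, hence an embedded circle possessing a two-sided (annular) neighbourhood. It therefore suffices to check that in each of $\Disk$, $\Cylinder$, $\Sphere$, $\MobiusBand$, $\PrjPlane$ every two-sided embedded circle lying in the interior separates. For $\Disk$ and $\Sphere$ this is the Jordan curve theorem (equivalently, $H_1=0$); in the cylinder $\Cylinder$ every embedded circle separates; and in the M\"obius band $\MobiusBand$ and the projective plane $\PrjPlane$ the only non-separating embedded circle in the interior is the one-sided core, which is not two-sided and so cannot be a regular leaf. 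Thus \ref{enum:lm:KR_is_a_tree:1} holds for all five surfaces.

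I expect the only points genuinely requiring care to be the closedness-and-saturation bookkeeping that makes $\prj$ restricted to $\Mman\setminus\Kman$ a quotient map, and the handling of loop edges (cycles of length one) in the graph-theoretic step; everything else is routine.
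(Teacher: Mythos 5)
The paper gives no proof of this lemma: it is prefaced by ``The following statement is evident'' and given no printed argument, so there is nothing of the authors' to compare yours against. Your argument is correct and supplies what the paper leaves unsaid. The reduction through the quotient map $\prj:\Mman\to\KRGraphf$ is exactly the right mechanism: a quotient map with nonempty connected fibres reflects connectedness; the restriction of $\prj$ to the open saturated set $\Mman\setminus\Kman$ is again such a map onto $\KRGraphf\setminus\{\prj(\Kman)\}$; hence a regular leaf $\Kman\subset\Int\Mman$ separates $\Mman$ precisely when $\prj(\Kman)$ is a cut point of $\KRGraphf$. Identifying these leaves with the interior points of edges and invoking the graph-theoretic fact that a connected finite graph is a tree iff every interior point of every edge is a cut point (with the loop-edge case you correctly flag) gives the equivalence of~\ref{enum:lm:KR_is_a_tree:1} and~\ref{enum:lm:KR_is_a_tree:2}. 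Your treatment of the five named surfaces is also right: a regular leaf always has an annular, hence two-sided, tubular neighbourhood because $\func$ is a submersion along it, and in each listed surface every two-sided simple closed curve in the interior separates --- by the Jordan curve theorem for $\Disk$ and $\Sphere$, by an Euler-characteristic or cut-and-paste argument for $\Cylinder$, and because the only non-separating simple closed curve in $\MobiusBand$ or in $\PrjPlane$ is one-sided.

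One small caveat worth recording: the equivalence~\ref{enum:lm:KR_is_a_tree:1}$\Leftrightarrow$\ref{enum:lm:KR_is_a_tree:2}, as you (and the paper) state and use it, implicitly assumes $\Mman$ is connected. For disconnected $\Mman$ condition~\ref{enum:lm:KR_is_a_tree:1} can hold while $\KRGraphf$ is a disconnected forest rather than a tree. That hypothesis is explicit in all the downstream theorems that invoke this lemma, so this is not a defect of your argument relative to the paper --- merely a hypothesis to read into the statement.
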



\subsubsection{Singular foliation of $\func$}
Consider finer partition $\SingFol$ of $\Mman$ whose elements are of the following three types:
\begin{enumerate}[label={\rm(\roman*)}]
\item regular leaves of $\func$;
\item critical points of $\func$;
\item connected components of the sets $\crLev\setminus\fSing$, where $\crLev$ runs over all critical leaves of $\func$ (evidently, each such component is an open arc).
\end{enumerate}
In other words, each critilal leaf of $\func$ is additionally partitioned by critical points.
We will call $\SingFol$ the \myemph{singluar foliation of $\func$}.

\subsubsection{Hamiltonian like flows of $\func\in\FMP$}
Suppose $\Mman$ is orientable.
A smooth vector field $\fld$ on $\Mman$ will be called \myemph{Hamiltonian-like} for $\func$ if the following conditions hold true.
{\em\begin{enumerate}[leftmargin=*, label={\rm(\alph*)}, itemsep=1ex]
\item\label{enum:HamVF:F_0_crpt}
$\fld(z)=0$ if and only if $z$ is a critical point of $\func$.
\item\label{enum:HamVF:Ff_0}
$d\func(\fld)\equiv0$ everywhere on $\Mman$, i.e. $\func$ is constant along orbits of $\fld$.
\item\label{enum:HamVF:local_form}
Let $z$ be a critical point of $\func$.
Then there exists a local representation of $\func$ at $z$ in the form of a homogeneous polynomial $\gfunc:\bR^2\to\bR$ without multiple factors as in Axiom~\ref{enum:F:hompol}, such that in the same coordinates $(x,y)$ near the origin $0$ in $\bR^2$ we have that $\fld = -\gfunc'_{y}\,\tfrac{\partial}{\partial x} + \gfunc'_{x}\,\tfrac{\partial}{\partial y}$.
\end{enumerate}}

It follows from~\ref{enum:HamVF:F_0_crpt} and Axiom~\ref{enum:F:bd} that the orbits of $\fld$ precisely elements of the singular foliation $\SingFol$.

By~\cite[Lemma~5.1]{Maksymenko:AGAG:2006} or~\cite[Lemma~16]{Maksymenko:ProcIM:ENG:2010} every $\func\in\FF(\Mman,\Pman)$ admits a Hamiltonian-like vector field.

The folowing statement is a principal technical result established in a series of papers by the second author, see~\cite[Lemma~6.1(iv)]{Maksymenko:TA:2020} for details:
\begin{sublemma}\label{lm:char_Sid}
Let $\flow:\Mman\times\bR\to\Mman$ be a Hamiltonian like flow for $\func$ and $\dif\in\Stabilizer{\func}$.
Then $\dif\in\StabilizerId{\func}$ if and only if there exists a $\Cinfty$ function $\alpha:\Mman\to\bR$ such that $\dif(\px) = \flow(\px,\alpha(\px))$ for all $\px\in\Mman$.
Moreover, in this case the homotopy $H:\Mman\times[0;1] \to \Mman$ given by $H(\px,\pt) = \flow(\px,\pt\alpha(\px))$ is an isotopy between $H_0 = \id_{\Mman}$ and $H_1 =\dif$ in $\Stabilizer{\func}$.
\end{sublemma}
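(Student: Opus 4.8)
The plan is to prove the two implications of the equivalence separately and to deduce the ``moreover'' statement from the ``if'' direction. Throughout, let $\fld$ be the Hamiltonian-like vector field generating $\flow$; by conditions~\ref{enum:HamVF:F_0_crpt} and~\ref{enum:HamVF:Ff_0} the orbits of $\flow$ are exactly the elements of the singular foliation $\SingFol$ of $\func$ (so each orbit sits inside a single leaf) and $\func$ is constant along them.

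For the ``if'' direction I would start from the hypothesis $\dif(\px)=\flow(\px,\alpha(\px))$, $\alpha\in\Cinfty(\Mman,\bR)$, and put $H(\px,\pt)=\flow(\px,\pt\alpha(\px))$. Then $H$ is smooth, $H_0=\id_{\Mman}$, $H_1=\dif$, and $\func\circ H_\pt=\func$ for every $\pt$ since $\func$ is constant along orbits of $\flow$, so the whole matter reduces to showing that each $H_\pt$ is a diffeomorphism; $H$ is then the required $\func$-preserving isotopy. First I would check that $H_\pt$ is a \emph{local} diffeomorphism everywhere: at $z\in\fSing$ one has $\flow(z,\cdot)\equiv z$, hence $D_\px H_\pt(z)$ equals the time-$\pt\alpha(z)$ flow $\exp\bigl(\pt\alpha(z)\,D\fld(z)\bigr)$ of the linearisation of $\fld$ at $z$, which is invertible (here the normal form in~\ref{enum:HamVF:local_form} is used); at $\px\notin\fSing$ the field $\fld$ spans the tangent line to the leaves, and in a flow-box chart $H_\pt$ has Jacobian $1+\pt\,d\alpha(\fld)$. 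Now the function $1+d\alpha(\fld)$, which is this Jacobian at $\pt=1$, never vanishes (as $H_1=\dif$ is a diffeomorphism), equals $1$ on $\fSing$, and is positive along every circle leaf (there $\dif$ acts, in a flow parameter, by $s\mapsto s+\tilde\alpha(s)$ with $\tilde\alpha$ periodic, an orientation-preserving circle map); being continuous and nowhere zero on the connected surface $\Mman$ it is therefore everywhere $>0$, so $d\alpha(\fld)>-1$ and hence $1+\pt\,d\alpha(\fld)>0$ for all $\pt\in[0;1]$. Finally I would observe that $H_\pt$ carries each leaf into itself (orbits of $\flow$ lie inside leaves) and that, in a flow parameter, $H_\pt$ restricted to a leaf $\ell$ reads $s\mapsto s+\pt\tilde\alpha(s)$, a strictly increasing proper map, hence a bijection of $\ell$; as distinct leaves are disjoint and cover $\Mman$, $H_\pt$ is a bijection of $\Mman$, and therefore a diffeomorphism. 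This gives $\dif\in\StabilizerId{\func}$ and the last assertion simultaneously.

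For the ``only if'' direction I would take an $\func$-preserving isotopy $G\colon\Mman\times[0;1]\to\Mman$ with $G_0=\id_{\Mman}$ and $G_1=\dif$. Since $\func\circ G_\pt=\func$ and $G_0=\id_{\Mman}$, the path $\pt\mapsto G_\pt(\px)$ stays inside the connected component of $\func^{-1}(\func(\px))$ through $\px$, so each $G_\pt$ preserves every leaf of $\func$ as well as every point of $\fSing$. On the open dense set $\Mman\setminus\fSing$ the field $\fld$ is nowhere zero and tangent to the leaves, so the velocity $\tfrac{\partial G}{\partial\pt}(\px,\pt)$, being tangent to the leaf through $G_\pt(\px)$, can be written uniquely as $u(\px,\pt)\,\fld\bigl(G_\pt(\px)\bigr)$ with $u$ smooth on $(\Mman\setminus\fSing)\times[0;1]$. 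I would then set $\alpha(\px)=\int_0^1 u(\px,\pt)\,d\pt$ on $\Mman\setminus\fSing$ and note that $\pt\mapsto G_\pt(\px)$ and $\pt\mapsto\flow\bigl(\px,\int_0^{\pt}u(\px,\tau)\,d\tau\bigr)$ solve the same ODE $\dot y=u(\px,\pt)\,\fld(y)$ with $y(0)=\px$, hence coincide; at $\pt=1$ this yields $\dif(\px)=\flow(\px,\alpha(\px))$ for every $\px\notin\fSing$.

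The hard part, which I expect to be the main obstacle, is to show that this $\alpha$ extends to a $\Cinfty$ function on all of $\Mman$; once it does, continuity forces $\dif(z)=z=\flow(z,\alpha(z))$ at each $z\in\fSing$ automatically, finishing the proof. The trouble is that the ``flow-time'' function $u$ may blow up along orbits approaching a critical point $z$, and one has to show that this divergence is cancelled after integration in $\pt$, so that $\alpha$ together with all its derivatives has finite limits at $z$. The leverage is the explicit form of $\fld$ near $z$ dictated by a homogeneous polynomial without multiple factors (axioms~\ref{enum:F:hompol} and~\ref{enum:HamVF:local_form}) together with the fact that each $G_\pt$ is a diffeomorphism, which bounds the ``shearing'' of $G$ near $z$; a similar but lighter analysis covers neighbourhoods of the edges of the critical leaves. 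This local estimate is the technical core of the statement and is where essentially all the effort in the cited references goes.
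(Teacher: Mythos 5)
The paper does not prove this lemma: it is quoted from \cite[Lemma~6.1(iv)]{Maksymenko:TA:2020} as an imported result, so there is no in-paper argument to compare against. Your outline nonetheless reproduces the correct overall architecture of the argument in that reference (the flow-box Jacobian criterion for the ``if'' direction; the ODE/integral construction of $\alpha$ for the ``only if'' direction). However, there is a genuine gap in the ``only if'' direction, which you flag yourself: you assert but do not prove that $\alpha(\px)=\int_0^1 u(\px,\pt)\,d\pt$, defined a priori only on $\Mman\setminus\fSing$, extends to a $\Cinfty$ function on all of $\Mman$. This is not a routine limiting step to be supplied later; it is essentially the entire mathematical content of the lemma. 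Near a critical point $z$ the coefficient $u(\px,\pt)$ in $\tfrac{\partial G}{\partial\pt}=u\cdot(\fld\circ G)$ typically diverges as $\px\to z$, since $\fld(z)=0$, and showing that the integral and all of its derivatives nevertheless have finite, smooth limits at $z$ requires a detailed normal-form analysis of $\fld$ near a homogeneous-polynomial singularity (axioms~\ref{enum:F:hompol} and~\ref{enum:HamVF:local_form}). This occupies most of the cited paper, and without it the ``only if'' direction is a roadmap rather than a proof.

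A smaller concern in the ``if'' direction: you conclude that each $H_\pt$ is a bijection leaf-by-leaf, invoking properness of $s\mapsto s+\pt\tilde\alpha(s)$ on non-compact leaves (arcs of $\SingFol$ inside critical level sets). Properness is not automatic there, since $\tilde\alpha$ may be unbounded along such an arc. The cleaner and standard route, once each $H_\pt$ has been shown to be a local diffeomorphism, is to note that a local diffeomorphism of the compact connected surface $\Mman$ to itself is a finite covering, and its degree, constant along the homotopy $\{H_\pt\}$ and equal to $1$ at $\pt=0$, is $1$; hence $H_\pt$ is a global diffeomorphism.
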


\section{Main results}
Let $\Mman$ be a compact surface, $\dif:\Mman\to\Mman$ a homeomorphism, and $\gamma \subset \Mman$ a submanifold of $\Mman$.
Then $\gamma$ is \myemph{$\dif$-invariant}, whenever $\dif(\gamma)=\gamma$.

Moreover, suppose $\gamma$ is connected and \myemph{orientable} $\dif$-invariant submanifold with $\dim\gamma\geq1$.
Then $\gamma$ is \myemph{$\pinv{\dif}$-invariant} (resp. \myemph{$\minv{\dif}$-invariant}) whenever the restriction $\dif|_{\gamma}:\gamma\to\gamma$ preserves (resp. reverses) orientation of $\gamma$.
If $\gamma$ is a fixed point of $\dif$, then we assume that $\gamma$ is mutually $\pinv{\dif}$- and $\minv{\dif}$-invariant.

We will be interesting in the structure of diffeomorphisms preserving $\func\in\FMP$ and reversing orientations of some regular leaves of $\func$.
The following easy lemma can be proved similarly to~\cite[Lemma~3.5]{Maksymenko:AGAG:2006}.
\begin{lemma}\label{lm:pres_fol}
Let $\Mman$ be a connected orientable surface and $\dif\in\Stabilizer{\func}$ be such that every regular leaf of $\func$ is $\dif$-invariant.
Then every critical leaf of $\func$ in also $\dif$-invariant and the following conditions are equivalent:
\begin{enumerate}[label={\rm(\arabic*)}]
\item some regular leaf of $\func$ is $\pinv{\dif}$-invariant;
\item all regular leaves of $\func$ are $\pinv{\dif}$-invariant;
\item $\dif$ preserves orientation of $\Mman$.
\end{enumerate}
\end{lemma}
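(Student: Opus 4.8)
The plan is to reduce the statement, following the pattern of \cite[Lemma~3.5]{Maksymenko:AGAG:2006}, to the sign of the Jacobian of $\dif$ near a single regular leaf, which is globally constant on the connected surface $\Mman$; the critical leaves will be disposed of by a short accumulation argument. First I would show that every critical leaf $\crLev$ is $\dif$-invariant. Pick an $\func$-regular neighbourhood $\crNbh$ of $\crLev$, so $\crNbh$ is a connected component of some $\func^{-1}[c-\eps,c+\eps]$ with no critical points off $\crLev$, and $\crNbh\setminus\crLev\ne\varnothing$ because $\crLev$ is $1$-dimensional. For $q\in\crNbh\setminus\crLev$ the leaf $L_q$ through $q$ is connected, lies in $\func^{-1}[c-\eps,c+\eps]$ and meets $\crNbh$, hence $L_q\subset\crNbh$; it cannot equal $\crLev$, so it is disjoint from $\crLev$ and is a regular leaf inside $\crNbh\setminus\crLev$, thus $\dif$-invariant by hypothesis. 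Letting $q$ vary we get $\dif(\crNbh\setminus\crLev)=\crNbh\setminus\crLev$; and since $\dif$ permutes the components of $\func^{-1}[c-\eps,c+\eps]$ and fixes a leaf inside $\crNbh$, it also fixes $\crNbh$. As $\dif$ is a bijection, $\dif(\crLev)=\dif(\crNbh)\setminus\dif(\crNbh\setminus\crLev)=\crLev$.

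For the equivalences, fix a regular leaf $L$. Since $\func$ has no critical points on $L$ and is constant on each component of $\partial\Mman$, $L$ is a circle, contained either in $\Int\Mman$ or in $\partial\Mman$ as a whole component. I would then use a foliated product neighbourhood of $L$: a neighbourhood $V$ and a diffeomorphism $V\cong L\times J$, with $J=(-1,1)$ in the interior case and $J=[0,1)$ in the boundary case, under which the leaves of $\func$ lying in $V$ are exactly the slices $L\times\{t\}$ and $\func|_V$ is an affine function of $t$. Such a $V$ is produced in the standard way (a short-time flow along a vector field transverse to the levels and adapted to $\partial\Mman$, or the $\prj$-preimage of a small neighbourhood of $L$ in $\KRGraphf$). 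Because $\dif$ preserves $\func$ and each slice $L\times\{t\}$ is a regular leaf, it follows that $\dif(V)=V$ and $\dif(\ell,t)=(\varphi_t(\ell),t)$ for a smooth family of circle diffeomorphisms $\varphi_t\colon L\to L$ with $\varphi_0=\dif|_L$.

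Next I would orient $L$ so that over the connected set $V$ the orientation of $\Mman$ equals the product of the orientations of $L$ and of $J$. In these coordinates $D\dif$ is block-triangular with diagonal entries the derivative $\varphi_t'(\ell)$ of $\varphi_t$ and the number $1$, so $\det D\dif(\ell,t)=\varphi_t'(\ell)$. As $\Mman$ is connected and $\dif\in\Diff(\Mman)$, this function has constant sign on $\Mman$, hence on $V$; that common sign is positive if and only if $\dif$ preserves the orientation of $\Mman$, and, read off at $t=0$, if and only if $\varphi_0=\dif|_L$ preserves the orientation of $L$, i.e. if and only if $L$ is $\pinv{\dif}$-invariant. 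Hence for \emph{every} regular leaf $L$: the leaf $L$ is $\pinv{\dif}$-invariant precisely when $\dif$ preserves the orientation of $\Mman$. Since $\func$ is non-constant there is at least one regular leaf, so this equivalence yields the cycle $(1)\Rightarrow(3)\Rightarrow(2)\Rightarrow(1)$, which is the assertion.

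The step I expect to require the most care is the construction of the foliated product neighbourhood $V$ with $\func$ in normal form, uniformly in the interior and boundary cases and respecting $\partial\Mman$; the rest is a bijection chase together with the constancy of the sign of the Jacobian on a connected manifold.
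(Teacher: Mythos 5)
Your proof is correct. The paper itself gives no argument here, merely citing Lemma~3.5 of Maksymenko's 2006 paper, so there is no in-text proof to compare against; your argument is the natural one that the citation points to. The accumulation argument for the invariance of critical leaves (passing through $\func$-regular neighbourhoods $\crNbh$ and using that $\dif$ permutes the components of $\func^{-1}[c-\eps,c+\eps]$) is sound, and the reduction of the equivalences to the sign of $\det D\dif$ in product coordinates $L\times J$ near a regular leaf $L$, combined with the constancy of that sign on the connected manifold $\Mman$, is exactly the expected mechanism. Two small remarks: you should note that $\FMP$ excludes constant maps (axiom~\ref{enum:F:hompol} forces $\deg f_z\geq 2$), so regular leaves do exist and the cycle $(1)\Rightarrow(3)\Rightarrow(2)\Rightarrow(1)$ is non-vacuous; and in the block-triangular Jacobian step it is worth saying explicitly that for $\dif(V)=V$ you also need each slice to be $\dif^{-1}$-invariant, which follows since $\dif$ is a bijection permuting leaves and fixing each regular one. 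Neither point is a gap, just a matter of stating the hygiene.
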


\begin{counterexample}
Lemma~\ref{lm:pres_fol} fails for non-orientable surfaces.
Let $\Mman$ be a M\"obius band, and $\func:\Mman\to\bR$ be a Morse function having two critical points: one local extreme $\pz$ and one saddle $\py$.
Let $\Kman$ be the critical leaf of $\func$ containing $\py$, and $\Dman$ and $\Eman$ be the connected component of $\Mman\setminus\Kman$ containing $\pz$ and $\partial\Mman$ respectively.
Then it is easy to construct a diffeomorphism $\dif:\Mman\to\Mman$ (called \myemph{slice along central circle of M\"obius band}) which is fixed on $\partial\Mman$, and for which
$\Dman$ is $\minv{\dif}$-invariant.
Then regular leaves of $\func$ in $\Dman$ are $\minv{\dif}$-invariant, while regular leaves of $\func$ in $\Eman$ are $\pinv{\dif}$-invariant.
\end{counterexample}

Denote by $\MFolStabilizer{\func}$ the \myemph{subset} of $\Stabilizer{\func}$ consisting of diffeomorphisms $\dif$ such that \myemph{every regular leaf of $\func$ is $\minv{\dif}$-invariant}.

Let $\dif\in\MFolStabilizer{\func}$.
Then by Lemma~\ref{lm:pres_fol} every critical leaf $\Kman$ of $\func$ is $\minv{\dif}$-invariant, however, $\dif$ may interchange critical points of $\func$ in $\Kman$ and the leaves of $\SingFol$ contained in $\Kman$ (i.e.~connected components of $\Kman\setminus\fSing$).
Notice also that in general $\MFolStabilizer{\func}$ can be empty.

The following theorem can be regarded as a \myemph{homotopical} and \myemph{foliated} variant of the above rigidity property for diffeomorphisms preserving $\func$.

\begin{theorem}\label{th:sq_Sminus_Sid}
Let $\Mman$ be a connected compact orientable surface, $\func\in\FMP$, and $\dif\in \MFolStabilizer{\func}$.
Then $\dif^2\in\StabilizerId{\func}$.
In particular, for every $\dif\in\MFolStabilizer{\func}$ each leaf of $\SingFol$ is $(\dif^2,+)$-invariant.
\end{theorem}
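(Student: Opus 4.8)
The plan is to reduce the statement, by means of Lemma~\ref{lm:char_Sid}, to producing a flow shift. Fix a Hamiltonian-like flow $\flow\colon\Mman\times\bR\to\Mman$ for $\func$ (it exists by the results quoted above), whose orbits are exactly the elements of $\SingFol$. Then it suffices to find a $\Cinfty$ function $\alpha\colon\Mman\to\bR$ with $\dif^2(\px)=\flow(\px,\alpha(\px))$ for all $\px\in\Mman$, for then $\dif^2\in\StabilizerId{\func}$ and an explicit $\func$-preserving isotopy to $\id_\Mman$ is provided by Lemma~\ref{lm:char_Sid}. In particular one must first check that $\dif^2$ carries every element of $\SingFol$ onto itself. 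By Lemma~\ref{lm:pres_fol} the hypothesis $\dif\in\MFolStabilizer{\func}$ already gives that $\dif$, hence $\dif^2$, preserves every regular leaf and every critical leaf; moreover $\dif^2$ \emph{preserves} the orientation of each regular leaf.

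The heart of the argument is to show that $\dif^2$ fixes every critical point of $\func$ and maps every arc of $\SingFol$ onto itself preserving its orientation. Fix a critical leaf $\Kman$ and view it as a finite graph whose vertices are the critical points it contains and whose edges are the closures of the arcs of $\SingFol$ inside $\Kman$; then $\dif$ restricts to a graph automorphism of $\Kman$. Let $\Nman$ be an $\func$-regular neighbourhood of $\Kman$ and $\Wman$ a connected component of $\Nman\setminus\Kman$. A regular leaf $\Kman'\subset\Wman$ lying sufficiently close to $\Kman$ is a circle which traverses a closed edge--path in $\Kman$, and this traversal arranges the edges and vertices it meets into a cyclic sequence of some length $N$. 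Since $\dif$ leaves $\Wman$ invariant (because $\dif(\Kman')=\Kman'$ and $\dif$ permutes the components of $\Nman\setminus\Kman$) and $\dif$ \emph{reverses} the orientation of $\Kman'$, the induced permutation of this cyclic sequence reverses cyclic order, hence has the form $i\mapsto a-i \pmod{N}$ for some $a$; consequently its square is the identity. Thus $\dif^2$ fixes every edge and vertex of $\Kman$ met by $\Kman'$ and preserves the traversal direction of each such edge. Every edge of $\Kman$ is met in this way by a regular leaf in a component of $\Nman\setminus\Kman$ lying on one of its two sides, so $\dif^2$ maps each edge of $\Kman$ onto itself with its orientation and therefore fixes each of its endpoints; the remaining critical points are isolated local extremes, which form critical leaves by themselves and so are fixed by $\dif^2$ directly. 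Since $\Kman$ was arbitrary, $\dif^2$ fixes every critical point and preserves every arc of $\SingFol$ with orientation; together with the previous paragraph, $\dif^2$ sends every orbit of $\flow$ to itself, preserving orientation.

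It remains to assemble the shift function. On $\Mman\setminus\fSing$ each orbit of $\flow$ is an element of $\SingFol$ carried to itself by $\dif^2$ with preserved orientation, so in flow-box coordinates there is locally a $\Cinfty$ function with $\dif^2=\flow(\cdot,\alpha)$; on the pieces of $\Mman$ carrying a product structure $\Xman\times\Circle$ over a regular leaf this local existence is precisely the situation addressed by Theorem~\ref{th:shift_func_XS1_L}. Two such local functions differ by a locally constant multiple of the period of the relevant orbit, hence --- after composing $\dif^2$, if necessary, with finitely many Dehn twists along regular leaves, each of which lies in $\StabilizerId{\func}$ so that the coset of $\dif^2$ modulo $\StabilizerId{\func}$ is unchanged --- they glue to a single $\Cinfty$ function $\alpha$ on $\Mman\setminus\fSing$. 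Finally $\alpha$ extends smoothly over each critical point $\pz$: here one invokes the local normal form of the Hamiltonian-like flow near $\pz$ (condition~\ref{enum:HamVF:local_form}) together with the facts that $\dif^2$ is a diffeomorphism fixing $\pz$ and that it preserves the local leaves with their orientations, which forces the travel time $\alpha$ to be $\Cinfty$ at $\pz$. Then Lemma~\ref{lm:char_Sid} yields $\dif^2\in\StabilizerId{\func}$. The ``in particular'' clause follows automatically: an element of $\StabilizerId{\func}$ is $\func$-preservingly isotopic to $\id_\Mman$, and along such an isotopy critical points cannot move and arcs and regular leaves cannot be reflected, so every leaf of $\SingFol$ is $(\dif^2,+)$-invariant.

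I expect the main obstacle to be the last step --- the globally consistent choice of the travel-time branch together with its smooth extension over the saddle points --- and the orientation-reversal hypothesis is exactly what removes the potential obstruction, through the order-two phenomenon for reflections of a finite cyclic set isolated in the second paragraph.
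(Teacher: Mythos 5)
Your proposal pursues a direct argument, whereas the paper deduces Theorem~\ref{th:sq_Sminus_Sid} as the special case $\Zman=\Mman$ of the more general Theorem~\ref{th:exist:g_rev_or} (itself resting on the chipped-cylinder analysis of Theorem~\ref{th:shift_func_on_chip_nbh}). Your first two paragraphs match the paper's technical backbone: the reduction to producing a shift function via Lemma~\ref{lm:char_Sid}, and the ``order-two phenomenon'' for the cyclic sequence of edges read off by a nearby regular leaf, which establishes that $\dif^2$ fixes every critical point and preserves every arc of $\SingFol$ with orientation. That part is sound.

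The third paragraph contains two genuine gaps, both concentrated exactly where you anticipated trouble. First, the gluing of the local shift functions is not handled by Dehn twists in the paper, and ``after composing $\dif^2$, if necessary, with finitely many Dehn twists'' is too vague to carry the argument: you give no reason that the transition cocycle can always be killed this way, and for a surface with a complicated Reeb graph this is not a priori clear. The point the paper exploits instead is that because $\dif$ restricts to a degree $-1$ map on every regular leaf, it has at least two fixed points on each such leaf (Lemma~\ref{lm:lift_dif_circle}\ref{enum:lm:lift_dif_circle:fixed_pt}); imposing that $\alpha$ vanish at the fixed points of $\dif$ pins the shift function down \emph{canonically}, with no residual integer ambiguity and no need to modify $\dif^2$ at all. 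This is the content of Theorem~\ref{th:shift_func_XS1_L}\ref{enum:shift_func_XS1_L:2} and Corollary~\ref{cor:shift_func_circle}\ref{enum:cor:shift_func_circle:2}, and it is what lets the locally defined $\alpha$'s agree on overlaps automatically (see Lemma~\ref{shift} in the paper). Second, your claim that ``$\dif^2$ is a diffeomorphism fixing $\pz$ and \dots preserves the local leaves with their orientations, which forces the travel time $\alpha$ to be $\Cinfty$ at $\pz$'' is false at a \emph{degenerate} local extreme. For a degenerate extreme, orbit-and-orientation preservation is not enough: a small rotation through an irrational angle preserves all the concentric orbits with orientation yet has unbounded travel time as one approaches $\pz$. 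The extension step genuinely needs the full hypothesis that $\dif^2$ is the square of an orientation-reversing orbit-preserving map; this is exactly case~\ref{emum:lm:ext_alpha:3} of Lemma~\ref{lm:ext_alpha} (equivalently, via Lemma~\ref{lm:lift}, that the lift in polar coordinates is fixed on $\bR\times 0$, which is case~\ref{emum:lm:ext_alpha:2}). Non-degenerate extremes and all saddles are covered by case~\ref{emum:lm:ext_alpha:1}, so, contrary to the last sentence of your write-up, the saddle points are not where the extension difficulty lies.
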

\begin{corollary}
Let $\ori:\pi_0 \Stabilizer{\func} \to \bZ_2 = \{0,1\}$ be the orientation homomorphism defined by $\ori(\StabilizerPlus{\func})=0$ and  $\ori(\StabilizerMinus{\func})=1$.
Then for each $\dif\in\MFolStabilizer{\func}$ the map $s:\bZ_2\to \pi_0\Stabilizer{\func}$, defined by $s(0)=[\id_{\Mman}]$ and $s(1)=[\dif]$ is a homomorphism satisfying $\ori\circ s = \id_{\bZ_2}$.
In other words, $s$ is a section of $\ori$, whence $\pi_0\Stabilizer{\func}$ is a certain semidirect product of $\pi_0\StabilizerPlus{\func}$ and $\bZ_2$.
\end{corollary}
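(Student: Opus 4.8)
The plan is to exhibit, with the help of Lemma~\ref{lm:char_Sid}, an explicit $\func$-preserving isotopy from $\dif^2$ to $\id_{\Mman}$. First I would fix a Hamiltonian-like flow $\flow\colon\Mman\times\bR\to\Mman$ for $\func$ (it exists by the results cited after its definition). Since $\dif\in\MFolStabilizer{\func}$ reverses the orientation of every regular leaf, condition~(3) of Lemma~\ref{lm:pres_fol} fails for $\dif$, so $\dif$ reverses the orientation of $\Mman$ and $\dif^2\in\StabilizerPlus{\func}$; applying Lemma~\ref{lm:pres_fol} now to $\dif^2$ shows that every leaf of $\func$ — regular or critical — is $\dif^2$-invariant and every regular leaf is $(\dif^2,+)$-invariant. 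As the leaves of $\func$ are exactly the orbits of $\flow$, this means $\dif^2(\px)$ always lies on the $\flow$-orbit of $\px$, with matching orientation on regular leaves. Hence it suffices, by Lemma~\ref{lm:char_Sid}, to produce a $\Cinfty$ function $\alpha\colon\Mman\to\bR$ with $\dif^2(\px)=\flow(\px,\alpha(\px))$ for all $\px$; then $H(\px,\pt)=\flow(\px,\pt\,\alpha(\px))$ is the desired isotopy, and the ``in particular'' assertion follows since $\flow$ preserves each leaf of $\SingFol$ together with its orientation.

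On the open set $\Mman\setminus\fSing$ of regular points I would build $\alpha$ by a cocycle computation. Because $\dif$ maps each (necessarily circular) regular leaf to itself reversing its $\flow$-orientation, there is a unique $\Cinfty$ function $\phi$ on $(\Mman\setminus\fSing)\times\bR$ with $\phi(\px,0)=0$ and $\dif(\flow(\px,t))=\flow(\dif(\px),\phi(\px,t))$; it satisfies $\partial_t\phi<0$, is a cocycle over $\flow$ (that is, $\phi(\px,u+t)=\phi(\px,u)+\phi(\flow(\px,u),t)$), and obeys $\phi(\px,t+\theta(\px))=\phi(\px,t)-\theta(\px)$, where $\theta(\px)>0$ is the period of the orbit of $\px$. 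Applying $\dif$ once more yields $\dif^2(\flow(\px,t))=\flow(\dif^2(\px),\psi(\px,t))$ with $\psi(\px,t):=\phi(\dif(\px),\phi(\px,t))$, and one checks that $\psi$ is again a cocycle over $\flow$ with $\psi(\px,t+\theta(\px))=\psi(\px,t)+\theta(\px)$. Consequently $c(\px,t):=\psi(\px,t)-t$ is a cocycle over $\flow$ which is $\theta(\px)$-periodic in $t$ and whose value over a full period vanishes, $c(\px,\theta(\px))=0$. A cocycle over $\flow$ vanishing over every closed orbit is a coboundary: choosing on a local transversal the value $\alpha(\px)$ to be a $\flow$-time from $\px$ to $\dif^2(\px)$ and propagating by $c$ produces a $\Cinfty$ function $\alpha$ on $\Mman\setminus\fSing$ with $\alpha(\flow(\px,t))=\alpha(\px)+c(\px,t)$ and, automatically, $\flow(\px,\alpha(\px))=\dif^2(\px)$. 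Different auxiliary choices change $\alpha$ only by an integer multiple of the period, so there is a well-defined ``bounded near $\fSing$'' normalization.

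What remains — and this is where I expect the real difficulty — is to show that $\alpha$ extends to a $\Cinfty$ function on all of $\Mman$; once it does, Lemma~\ref{lm:char_Sid} gives $\dif^2\in\StabilizerId{\func}$ and the explicit isotopy, finishing the proof. Merely producing a continuous extension already forces $\dif^2$ to fix each critical point (take limits in $\dif^2(\px)=\flow(\px,\alpha(\px))$) and each open arc of each critical leaf. For smoothness I would pass to a local presentation of $\func$ at each critical point as a homogeneous polynomial without multiple factors (Axiom~\ref{enum:F:hompol}): there the $\flow$-orbits are the level curves of the polynomial, $\dif$ restricts to an $\func$-preserving orientation-reversing germ (in particular, a combinatorial argument on the $4$-valent graph formed by a critical leaf shows $\dif$ acts on the critical points of that leaf by an involution, consistent with $\dif^2$ fixing them), and one must verify that the shift function of $\dif^2$ extends smoothly across the singular point. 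At local extrema, where the nearby orbits are closed, this is a normal-form/boundedness estimate near the origin; at saddles and quasi-saddles, where $\theta\to\infty$, it requires the finer structural results on $\func$-preserving germs from~\cite{Maksymenko:AGAG:2006,Maksymenko:TA:2020}, together with the parametric rigidity statement for self-homeomorphisms of open subsets of $\Xman\times\Circle$ preserving the first coordinate (Theorem~\ref{th:shift_func_XS1_L}), applied on the cylinders $\Circle\times[0,1]$ that constitute the regular part of an $\func$-adapted decomposition of $\Mman$ lying between the critical blocks; the same decomposition lets one check that the locally defined shift functions agree on overlaps and hence glue to the global $\alpha$. In short, the cocycle bookkeeping away from $\fSing$ is routine, and the entire obstacle is concentrated in controlling $\alpha$ near the saddles — precisely the place where the hard analysis of deformations of $\func$ must be invoked.
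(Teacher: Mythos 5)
Your argument addresses the wrong statement at the wrong level: it is an attempted (re-)proof of Theorem~\ref{th:sq_Sminus_Sid} itself, not of the corollary, and it is moreover incomplete.

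The corollary is a formal consequence of Theorem~\ref{th:sq_Sminus_Sid}, which has already been established (its own proof occupies most of the paper, via Theorem~\ref{th:exist:g_rev_or}, the chipped-cylinder decomposition of Section~\ref{sect:chip_cyl}, Theorem~\ref{th:shift_func_on_chip_nbh}, and Lemma~\ref{lm:gm_in_Sid}). Given the theorem, the argument is one line: $[\dif]^{2}=[\dif^{2}]=[\id_{\Mman}]$ in $\pi_0\Stabilizer{\func}$, so $s$ is a group homomorphism from $\bZ_2$; by Lemma~\ref{lm:pres_fol}, $\dif\in\MFolStabilizer{\func}$ reverses orientation of $\Mman$, so $\ori(s(1))=1$ while clearly $\ori(s(0))=0$, giving $\ori\circ s=\id_{\bZ_2}$; and a section of a surjection $\pi_0\Stabilizer{\func}\to\bZ_2$ with kernel $\pi_0\StabilizerPlus{\func}$ yields the semidirect-product structure. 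Nothing further is needed.

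Instead you embark on a direct construction of a global shift function $\alpha$ with $\dif^{2}=\flow_{\alpha}$, which is precisely the content of Theorem~\ref{th:sq_Sminus_Sid}. Your cocycle bookkeeping on $\Mman\setminus\fSing$ is plausible as a variant of the covering-space argument of Lemma~\ref{lm:lift_of_shifts} and Corollary~\ref{lm:cov_shift_func}, but you explicitly concede that the critical step --- extending $\alpha$ smoothly across $\fSing$, in particular at saddles where the period function blows up --- is not carried out: you write that ``the entire obstacle is concentrated in controlling $\alpha$ near the saddles'' and gesture at the needed machinery without invoking it to close the gap. That is exactly the technical heart of the paper (Lemma~\ref{lm:ext_alpha}, Theorem~\ref{th:shift_func_on_chip_nbh}, and the gluing in Section~\ref{sect:proof:th:exist:g_rev_or}), and leaving it as an expectation means the sketch does not establish $\dif^{2}\in\StabilizerId{\func}$. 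In short: cite Theorem~\ref{th:sq_Sminus_Sid} and conclude in a sentence, rather than attempting to reconstruct its proof and stopping before the difficult part.
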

\begin{proof}
One should just mention that Theorem~\ref{th:sq_Sminus_Sid} implies that $[\dif]^2 = [\id_{\Mman}]$ in $\pi_0\Stabilizer{\func}$.
\end{proof}

Now consider the situation, when not all regular leaves are $\minv{\dif}$-invariant.
To clarify the situation we first formulate a particular case of the general statement for maps on $2$-disk and cylinder.

\begin{theorem}\label{th:exist:2disk}
Let $\Mman$ be a connected compact orientable surface, $\func\in\FMP$, $\Vman$ be a regular leaf of $\func$, and $\dif\in\Stabilizer{\func}$.
Suppose every regular leaf of $\func$ in $\Int{\Mman}$ separates $\Mman$ (this holds e.g. when $\Mman$ is a $2$-disk, cylinder or a $2$-sphere, see Lemma~\ref{lm:KR_is_a_tree}), and that $\Vman$ is $\minv{\dif}$-invariant.
Then there exists $\gdif\in\Stabilizer{\func}$ which coincide with $\dif$ on some neighborhood of $\Vman$ and such that $\gdif^2 \in \StabilizerId{\func}$.
\end{theorem}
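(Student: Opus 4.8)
The plan is to use the fact that $\Vman$ separates $\Mman$ to localise all difficulty near $\Vman$: I will choose $\gdif$ to agree with $\dif$ on a thin cylinder about $\Vman$ and to be an honest $\func$-preserving \emph{involution} outside a slightly larger cylinder, so that $\gdif^2$ differs from $\id_\Mman$ only on a cylinder about $\Vman$, where the difference is killed by the circle-rigidity recalled in the Introduction. More precisely, assume first $\Vman\subset\Int\Mman$ (the case $\Vman\subset\partial\Mman$ is analogous and easier). Write $\Mman=\Mman_1\cup_{\Vman}\Mman_2$ for the closures of the two sides of $\Vman$; since $\dif$ preserves $\func$ and fixes $\Vman$ setwise, comparing the values of $\func$ on the two local sides of $\Vman$ shows that $\dif$ cannot swap them, so $\dif(\Mman_i)=\Mman_i$. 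Pick an $\func$-regular neighbourhood $\crNbh$ of $\Vman$, identified with $\Circle\times[-1,1]$ so that $\Vman=\Circle\times0$ and $\func$ is the projection to $[-1,1]$, and set $\crNbh_0=\Circle\times[-\tfrac12,\tfrac12]$. Then $\dif|_{\crNbh}$ has the form $(\theta,s)\mapsto(\dif_s(\theta),s)$ with $\dif_0=\dif|_\Vman$ orientation reversing, hence every $\dif_s\in\Homeo^{-}(\Circle)$.

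The key construction is to produce, on $W_i:=\overline{\Mman_i\setminus\crNbh}$, an $\func|_{W_i}$-preserving involution $\sigma_i$ which restricts on the inner boundary leaf of $W_i$ to some linear reflection $\rho_i\in\SO^{-}(2)$. Since $\KRGraphf$ is a tree --- this is where the hypothesis enters, via Lemma~\ref{lm:KR_is_a_tree} --- one attempts to extend $\rho_i$ outward over the decomposition of $W_i$ along regular leaves into cylinders $\Circle\times[a,b]$ and $\func$-regular neighbourhoods $R$ of critical leaves: over a cylinder, extend a boundary involution $\iota$ as $\iota\times\id$; over a neighbourhood $R$ of a critical leaf $\crLev$, use the homogeneous-polynomial normal forms of~\ref{enum:F:hompol} to build a model involution near each critical point of $\crLev$ --- fixing those critical points whose local normal form admits an orientation reversing linear symmetry, and \emph{swapping the remaining critical points of $\crLev$ in pairs} (which is admissible for an involution, since $\sigma_i^2$ then fixes each of them with trivial $1$-jet) --- and glue these models by the $\func$-trivial cylinders inside $R$. \emph{This is the step I expect to be the main obstacle:} one has to show that for every critical leaf allowed by the tree condition, and for every prescribed boundary reflection, such local models exist and can be assembled consistently over all of $W_i$; this is a combinatorial analysis of critical leaves and carries the real content of the theorem.

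Finally set $\gdif:=\dif$ on $\crNbh_0$, $\gdif:=\sigma_i$ on $W_i$, and over the two transition half-cylinders $\crNbh\setminus\Int\crNbh_0$ join $\dif_{\pm1/2}$ to $\rho_i$ through a path in $\Homeo^{-}(\Circle)$ (possible since $\SO^{-}(2)$ is a deformation retract of $\Homeo^{-}(\Circle)$) and extend it $\func$-preservingly; keeping the pieces independent of $s$ near the seams makes $\gdif$ a well-defined element of $\Stabilizer{\func}$ agreeing with $\dif$ on the neighbourhood $\Int\crNbh_0$ of $\Vman$. Then $\gdif^2=\id$ on $\Mman\setminus\Int\crNbh$, while $\gdif^2|_{\crNbh}=(\theta,s)\mapsto(\gdif_s^2(\theta),s)$ where $s\mapsto\gdif_s$ is a path in $\Homeo^{-}(\Circle)$ from $\rho_2$ to $\rho_1$, so $s\mapsto\gdif_s^2=\sq(\gdif_s)$ is a path in $\Homeo^{+}(\Circle)$ from $\id$ to $\id$. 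As $\sq\colon\Homeo^{-}(\Circle)\to\Homeo^{+}(\Circle)$ is null-homotopic through a homotopy that is constant on $\SO^{-}(2)$, this path is homotopic, relative to its endpoints, to the constant path at $\id$; equivalently (this is what Theorem~\ref{th:shift_func_XS1_L} yields for the cylinder $\crNbh$) $\gdif^2|_{\crNbh}$ is isotopic to $\id_{\crNbh}$ by an $\func$-preserving isotopy fixed on $\partial\crNbh$. Extending it by the constant isotopy over $\Mman\setminus\Int\crNbh$ gives an $\func$-preserving isotopy from $\id_\Mman$ to $\gdif^2$, so $\gdif^2\in\StabilizerId{\func}$, as required. (Alternatively, one may verify directly, via Lemma~\ref{lm:char_Sid}, that $\gdif^2$ is the shift along a Hamiltonian-like flow of $\func$ by a smooth function.)
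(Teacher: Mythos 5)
Your plan hinges on constructing, on each piece $W_i=\overline{\Mman_i\setminus\crNbh}$, an $\func$-preserving involution $\sigma_i$ extending a prescribed boundary reflection, and you yourself flag this step as ``the main obstacle'' and ``the real content of the theorem''. In fact this step cannot in general be carried out, so the gap is not merely unfinished business. Take a critical point $z$ that any $\func$-preserving homeomorphism of its critical leaf must fix (for instance the unique vertex of that leaf). If $\sigma$ is a germ of an orientation-reversing involution at $z$ with $\func_z\circ\sigma=\func_z$, then its linear part $A=D\sigma(z)$ satisfies $A^2=I$, $\det A=-1$, and, comparing lowest-order Taylor terms of $\func_z\circ\sigma$ and $\func_z$, also $\func_z\circ A=\func_z$. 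Such an $A$ is a linear reflection which, by unique factorization, must permute the zero lines of the linear factors of $\func_z$. For three or more pairwise non-proportional linear factors at generic angles no reflection permutes them, so no such $A$ and hence no $\sigma$ exists. Since the class $\FMP$ explicitly admits such degenerate critical points, your $\sigma_i$ fails to exist on the corresponding $W_i$, and the stronger conclusion your scheme would yield --- $\gdif^2=\id$ off a tube around $\Vman$ --- is false in general.

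The paper sidesteps this entirely by never forcing $\gdif^2$ to be the identity anywhere. Theorem~\ref{th:exist:2disk} is a one-line corollary of the general Theorem~\ref{th:exist:g_rev_or}: the separation hypothesis immediately gives condition~\ref{cond:B}. In the proof of that theorem one takes $\Zman$ to be the union of \emph{all} $\minv{\dif}$-invariant regular leaves together with $\func$-regular neighbourhoods of the critical leaves they meet (a much larger set than your $\crNbh$), shows via Theorem~\ref{th:shift_func_on_chip_nbh} that $\dif^2|_{\Zman}$ is already a shift $\flow_\alpha$ along a Hamiltonian-like flow, observes from Lemma~\ref{lm:dZ_prop} and condition~\ref{cond:B} that the components of $\Mman\setminus\Zman$ come in pairs swapped by $\dif$, and then (Lemma~\ref{lm:gm_in_Sid} with $m=2$) redefines $\dif$ on one member of each pair so that $\gdif^2$ becomes a global shift, hence lies in $\StabilizerId{\func}$ by Lemma~\ref{lm:char_Sid}. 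Your circle-rigidity observations --- the null-homotopy of $\sq$ and the retraction of $\Homeo^{\pm}(\Circle)$ onto $\SO^{\pm}(2)$ --- are in precisely the spirit of Theorem~\ref{th:shift_func_XS1_L}, which the paper does invoke, but inside Theorem~\ref{th:shift_func_on_chip_nbh} and applied to all of $\Zman$, not to a tube around $\Vman$ with an involutive extension prescribed outside.
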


Emphasize that Theorem~\ref{th:exist:2disk} \myemph{does not claim that $\gdif\in\MFolStabilizer{\func}$}, though $\gdif$ reverses orientation of $\Vman$ and its square belong to $\StabilizerId{\func}$.

\subsection*{General result}
Theorems~\ref{th:sq_Sminus_Sid} and~\ref{th:exist:2disk} are consequences of the following Theorem~\ref{th:exist:g_rev_or} below.
Let $\Mman$ be a connected compact (not necessarily orientable) surface, $\func\in\FMP$, and $\dif\in\Stabilizer{\func}$.
Let also
\begin{enumerate}[leftmargin=6ex, label={$\bullet$}, itemsep=0.8ex]
\item
$\Aman$ be the union of \myemph{all $\minv{\dif}$-invariant regular leaves of $\func$},
\item
$\Kman_1,\ldots,\Kman_{\vk}$ be all the critical leaves of $\func$ such that $\overline{\Aman}\cap\Kman_i\not=\varnothing$;
\item
for $i=1,\ldots,\vk$, let $\regN{\Kman_i}$ be an $\func$-regular neighborhood of $\Kman_i$ chosen so that $\regN{\Kman_i} \cap \regN{\Kman_j} = \varnothing$ for $i\not= j$ and
\[
   \Zman  :=  \Aman \,\bigcup\,  \Bigl( \mathop{\cup}\limits_{i=1}^{\vk} \regN{\Kman_i} \Bigr).
\]
Evidently, $\Zman$ is an $\func$-adapted subsurface of $\Mman$ and each of its connected components intersects $\Aman$.
\end{enumerate}

\begin{sublemma}\label{lm:dZ_prop}
Suppose $\Zman$ is orientable.
Let also $\gamma$ be a boundary component of $\Zman$.
Consider the following conditions:
\begin{enumerate}[label={\rm(\arabic*)}]
\item\label{enum:lm:dZ_prop:1} $\gamma \subset \Int{\Mman}$;
\item\label{enum:lm:dZ_prop:2} $\gamma = \partial\Uman\cap\partial\Zman$ for some connected component $\Uman$ of $\overline{\Mman\setminus\Zman}$;
\item\label{enum:lm:dZ_prop:3} $\dif(\gamma) \cap \gamma = \varnothing$;
\item\label{enum:lm:dZ_prop:4} $\dif(\gamma) \not= \gamma$.
\end{enumerate}
Then \ref{enum:lm:dZ_prop:1}$\Leftrightarrow$\ref{enum:lm:dZ_prop:2} $\Rightarrow$ \ref{enum:lm:dZ_prop:3}$\Leftrightarrow$\ref{enum:lm:dZ_prop:4}.
\end{sublemma}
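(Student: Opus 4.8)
The plan is to verify the three implications one at a time, relying throughout on the description of $\Zman$ as the union of $\Aman$ (all $\minv{\dif}$-invariant regular leaves) together with disjoint $\func$-regular neighborhoods $\regN{\Kman_i}$ of the finitely many critical leaves meeting $\overline{\Aman}$. The key structural fact I want to extract first is: \emph{every boundary component $\gamma$ of $\Zman$ is a regular leaf of $\func$.} Indeed, by construction $\partial\Zman$ lies in $\Aman \cup \bigl(\cup_i \partial\regN{\Kman_i}\bigr)$; a leaf in $\Aman$ is by definition regular, and each $\partial\regN{\Kman_i}$ consists of regular leaves on the ``critical value $\pm\eps$'' levels by the definition of an $\func$-regular neighborhood. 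Moreover such a $\gamma$, being a regular leaf of $\func\in\FMP$, is a circle (it is a compact $1$-manifold, connected, with no critical points, and it is not an arc since $\func$ has no critical points on $\partial\Mman$). So $\gamma$ is an embedded circle on which $\dif|_\gamma$ is a diffeomorphism onto its image $\dif(\gamma)$, which is again a regular leaf of $\func$ at the \emph{same} level $\func(\gamma)$ since $\dif$ preserves $\func$.

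For \ref{enum:lm:dZ_prop:1}$\Leftrightarrow$\ref{enum:lm:dZ_prop:2}: the implication \ref{enum:lm:dZ_prop:2}$\Rightarrow$\ref{enum:lm:dZ_prop:1} is immediate because $\partial\Uman\cap\partial\Zman$ cannot contain any component of $\partial\Mman$ (such a component would have to be simultaneously in $\Aman$, but $\func$ is \emph{constant} with \emph{no critical points} on $\partial\Mman$ by~\ref{enum:F:bd}, so $\partial\Mman$-components are leaves but are never interior regular leaves forming part of $\Aman$; and likewise $\partial\regN{\Kman_i}$ was chosen to avoid $\partial\Mman$). Conversely, if $\gamma\subset\Int{\Mman}$, then $\gamma$ is a two-sided embedded circle in the interior, and a collar neighborhood of $\gamma$ in $\Mman$ meets $\overline{\Mman\setminus\Zman}$ on (at least) one side; take $\Uman$ to be the connected component of $\overline{\Mman\setminus\Zman}$ containing that side, and then $\gamma\subset\partial\Uman\cap\partial\Zman$. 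Here I should be slightly careful to argue that $\gamma$ is entirely a boundary component of such a $\Uman$ and not, say, split between two components of the complement; this follows because $\gamma$ is connected and lies in $\overline{\Mman\setminus\Int\Zman}$, so it is contained in a single component $\Uman$.

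For \ref{enum:lm:dZ_prop:3}$\Leftrightarrow$\ref{enum:lm:dZ_prop:4}: clearly \ref{enum:lm:dZ_prop:3}$\Rightarrow$\ref{enum:lm:dZ_prop:4}. For the converse, suppose $\dif(\gamma)\cap\gamma\not=\varnothing$; both $\gamma$ and $\dif(\gamma)$ are regular leaves of $\func$ at the same value, hence they are connected components of the same level set $\func^{-1}(\func(\gamma))$, and two connected components of a set that have nonempty intersection coincide — so $\dif(\gamma)=\gamma$, contradicting \ref{enum:lm:dZ_prop:4}.

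The implication I expect to require the most care is \ref{enum:lm:dZ_prop:1}$\Rightarrow$\ref{enum:lm:dZ_prop:3} (equivalently \ref{enum:lm:dZ_prop:2}$\Rightarrow$\ref{enum:lm:dZ_prop:4}), since it is the only one using the actual geometry of $\Zman$ rather than formal properties of leaves. The plan is to argue by contradiction: assume $\gamma\subset\Int\Mman$ but $\dif(\gamma)=\gamma$. Since $\gamma$ is a boundary component of $\Zman$ lying in the interior of $\Mman$, on one side of $\gamma$ locally we are inside $\Zman$ and on the other side we are in a complementary region $\Uman$ (using \ref{enum:lm:dZ_prop:2}). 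Now $\gamma$ itself is a regular leaf; if it were $\minv{\dif}$-invariant it would belong to $\Aman\subset\Int\Zman$, so it could not be a boundary component of $\Zman$ — hence $\gamma$ is $\pinv{\dif}$-invariant, i.e.\ $\dif$ preserves the orientation of $\gamma$ and, being a $\func$-preserving diffeomorphism fixing $\gamma$ setwise with $d\func\not=0$ along $\gamma$, it maps the ``$\Zman$-side'' collar of $\gamma$ to itself and the ``$\Uman$-side'' to itself. But now consider regular leaves of $\func$ slightly on the $\Uman$-side of $\gamma$: by continuity of $\dif$ and the fact that $\dif$ permutes leaves preserving their levels, leaves immediately outside $\gamma$ are mapped to leaves immediately outside $\gamma$; their orientation behaviour under $\dif$ varies continuously, so since $\gamma$ is $\pinv{\dif}$, nearby leaves on the $\Uman$-side are also $\pinv{\dif}$-invariant (a leaf cannot be \emph{both}). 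Tracing this along $\Uman$ — using that $\Uman$ is a connected component of $\overline{\Mman\setminus\Zman}$ and hence contains \emph{no} $\minv{\dif}$-invariant regular leaf, because all of those were put into $\Aman\subset\Zman$ — gives no immediate contradiction by itself, so the real point must instead be: a collar of $\gamma$ inside $\Zman$ must contain $\minv{\dif}$-invariant leaves (since $\gamma\in\partial\Zman$ forces the interior of $\Zman$ near $\gamma$ to meet $\Aman$, by how the $\regN{\Kman_i}$ and $\Aman$ glue), and these accumulate on $\gamma$; passing to the limit, $\gamma$ would be $\minv{\dif}$-invariant as well, contradicting $\gamma\subset\partial\Zman$ and $\Aman\subset\Int\Zman$. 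I would formalize the ``orientation behaviour is locally constant on the nerve of leaves near a fixed regular leaf'' step carefully — that is the crux — perhaps invoking Lemma~\ref{lm:pres_fol} applied to the restriction of $\dif$ to a small $\func$-adapted neighborhood of $\gamma$, which is a cylinder, to conclude that on such a neighborhood $\dif$ either preserves orientation of \emph{all} regular leaves or of \emph{none}.
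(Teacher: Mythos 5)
The gap is in your proof of (1)$\Rightarrow$(3). You end with the claim that $\minv{\dif}$-invariant leaves from $\Aman$ accumulate on $\gamma$ from inside $\Zman$, so that in the limit $\gamma$ would be $\minv{\dif}$-invariant. That claim is false. A boundary component $\gamma\subset\partial\Zman\cap\Int{\Mman}$ is never contained in $\Aman$ (by the openness of $\Aman\cap\Int{\Mman}$, as you yourself observe), so it lies on $\partial\regN{\Kman_i}$ for some $i$, and the collar of $\gamma$ inside $\Zman$ consists of leaves of $\regN{\Kman_i}$, not of $\Aman$. Worse, under the standing hypothesis $\dif(\gamma)=\gamma$, $\dif$ preserves the chipped cylinder $\chp$ whose interior contains $\gamma$, so all regular leaves of $\Int{\chp}$ --- on \emph{both} sides of $\gamma$ --- are $\dif$-invariant with the same orientation behaviour as $\gamma$. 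Hence if $\gamma$ is $\pinv{\dif}$-invariant, no leaf in any small neighbourhood of $\gamma$ belongs to $\Aman$; the local picture around $\gamma$ is entirely consistent with $\dif(\gamma)=\gamma$, so no contradiction can come from a neighbourhood of $\gamma$ alone.

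The contradiction must be extracted globally, and this is exactly the role of Lemma~\ref{lm:pres_fol} in the paper's proof. Let $\Zman'$ be the connected component of $\Zman$ containing $\gamma$; by construction $\Zman'$ meets $\Aman$, so some regular leaf of $\func$ in $\Zman'$ is $\minv{\dif}$-invariant. Since $\Zman'$ is connected and orientable, and $\dif$ preserves the co-orientation of every regular leaf (because $\func\circ\dif=\func$), reversing the orientation of one $\dif$-invariant regular leaf is equivalent to $\dif$ reversing the orientation of $\Zman'$, which in turn forces $\dif$ to reverse the orientation of every $\dif$-invariant regular leaf in $\Zman'$ --- this is the content of Lemma~\ref{lm:pres_fol}. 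In particular $\gamma$ is $\minv{\dif}$-invariant, so $\gamma\subset\Aman$, and then your openness argument gives $\gamma\subset\Int{\Zman}$, contradicting $\gamma\subset\partial\Zman$. So the $\minv{\dif}$-property reaches $\gamma$ not through nearby leaves (there are none in $\Aman$ nearby), but by propagation across the whole component $\Zman'$. Two smaller remarks: in (2)$\Rightarrow$(1) the stated reason is wrong --- a component of $\partial\Mman$ can perfectly well be a $\minv{\dif}$-invariant regular leaf and hence lie in $\Aman$; the correct reason is simply that a boundary circle of $\Zman$ contained in $\partial\Mman$ has its $\Mman$-collar inside $\Zman$ and so is not adjacent to $\overline{\Mman\setminus\Zman}$. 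Your argument for (3)$\Leftrightarrow$(4) is correct and agrees with the paper's.
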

\begin{proof}
\ref{enum:lm:dZ_prop:1}$\Leftrightarrow$\ref{enum:lm:dZ_prop:2} is evident, and~\ref{enum:lm:dZ_prop:3}$\Leftrightarrow$\ref{enum:lm:dZ_prop:4} follows from the observation that $\gamma$ is a regular leaf of $\dif$ and $\dif$ permutes leaves of $\func$.

\ref{enum:lm:dZ_prop:2}$\Rightarrow$\ref{enum:lm:dZ_prop:4}
Suppose $\dif(\gamma) = \gamma$.
Let $\Zman'$ be a connected component of $\Zman$ containing $\gamma$.
Then by the construction $\Zman'$ must intersect $\Aman$, whence $\dif$ changes orientation of some regular leaves in $\Zman'$.
Since $\Zman'$ is also orientable, we get from Lemma~\ref{lm:pres_fol} that $\dif$ also changes orientation of $\gamma$.
This means that $\gamma\subset\Aman$, and therefore there exists an open neighborhood $\Wman \subset \Aman$ of $\gamma$ consisting of regular leaves of $\func$.
In particular, the regular leaves in $\Wman\cap \Int{\Uman}$ must be contained in $\Aman$ which contradicts to the assumption that $\Int{\Uman} \subset \Mman\setminus \Zman \subset  \Mman\setminus \Aman$.
\end{proof}

Thus $\dif$ interchanges boundary components of $\Zman$ belonging to the interior of $\Mman$.
We will introduce the following property on $\Zman$:
\begin{enumerate}[label={\rm(B)}]
\item\label{cond:B}
{\it every connected component of $\partial\Zman \cap \Int{\Mman}$ separates $\Mman$.}
\end{enumerate}

This condition means that there is a bijection between boundary components of $\partial\Zman \cap \Int{\Mman}$ and connected components of $\Mman\setminus\Zman$, whence by Lemma~\ref{lm:dZ_prop} there will be no $\dif$-invariant connected components of $\Mman\setminus\Zman$.

\begin{theorem}\label{th:exist:g_rev_or}
If $\Zman$ is non-empty, orientable and has property~\ref{cond:B}, then there exists $\gdif\in\Stabilizer{\func}$ such that $\gdif=\dif$ on $\Zman$ and $\gdif^2 \in \StabilizerId{\func}$.
\end{theorem}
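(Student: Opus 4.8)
The plan is to construct $\gdif$ by modifying $\dif$ outside of $\Zman$ so that the resulting diffeomorphism becomes, up to isotopy through $\Stabilizer{\func}$, ``as symmetric as possible'', and then to verify that its square lies in $\StabilizerId{\func}$ via the criterion of Lemma~\ref{lm:char_Sid}. The first observation is that, by Lemma~\ref{lm:dZ_prop} together with condition~\ref{cond:B}, the diffeomorphism $\dif$ permutes the connected components of $\overline{\Mman\setminus\Zman}$ with \emph{no fixed component}: each interior boundary circle $\gamma$ of $\Zman$ satisfies $\dif(\gamma)\ne\gamma$, and since every such $\gamma$ separates $\Mman$, the components of $\overline{\Mman\setminus\Zman}$ are in bijection with the circles of $\partial\Zman\cap\Int{\Mman}$. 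Consequently the components of $\overline{\Mman\setminus\Zman}$ are organized by $\dif$ into orbits, and on each orbit $\dif$ acts like a cyclic shift of copies of a single surface (possibly with a nontrivial ``monodromy'' diffeomorphism after one full turn). Any boundary circles of $\Zman$ lying in $\partial\Mman$ are, by axiom~\ref{enum:F:bd} and Lemma~\ref{lm:pres_fol}, $\minv{\dif}$-invariant and cause no trouble.

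Next I would reduce to the two model situations already singled out in the paper. Fix a Hamiltonian-like flow $\flow$ for $\func$ (which exists by \cite[Lemma~5.1]{Maksymenko:AGAG:2006} on the orientable pieces; globally one works componentwise). On $\Zman$ itself we keep $\gdif=\dif$ by fiat. For a component $\Uman$ of $\overline{\Mman\setminus\Zman}$ whose orbit under $\dif$ has length $\geq 2$, the map $\dif$ carries $\Uman$ onto a different copy, and one can simply \emph{redefine} $\gdif$ on the whole orbit by transporting a fixed model copy around via $\dif$ and closing up; the only constraint is to match the prescribed value of $\gdif=\dif$ on the adjacent collar inside $\Zman$, which is automatic because the collars themselves are cyclically permuted. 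Thus on orbits of length $\geq 2$ no obstruction arises, and in fact $\gdif$ restricted to such an orbit can be taken so that $\gdif^2$ already has the form $\flow(\cdot,\beta(\cdot))$ for a smooth $\beta$ — indeed a shift along $\flow$ can be inserted freely on the ``interior'' copies. The genuinely new content is therefore the case of an orbit of length exactly $1$: a component $\Uman$ bounded by a single circle $\gamma\subset\partial\Zman$ with $\dif(\gamma)=\gamma$. But Lemma~\ref{lm:dZ_prop} shows this cannot happen under~\ref{cond:B}: every interior boundary circle is moved off itself. Hence \emph{all} orbits of interior components have length $\geq 2$, and the construction closes up.

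The last step is to assemble these local modifications into a single $\gdif\in\Stabilizer{\func}$ and check $\gdif^2\in\StabilizerId{\func}$. Globally $\gdif$ equals $\dif$ on $\Zman$ and the newly built cyclic-shift maps on $\Mman\setminus\Int\Zman$; smoothness along $\partial\Zman$ is arranged by the usual collar-matching, using that both descriptions agree on a neighborhood of $\partial\Zman\cap\Int\Mman$ and that $\func$ is a product of fibers of the flow there. Since $\gdif^2$ fixes every component of $\overline{\Mman\setminus\Zman}$ and, on each such component and on each orbit-sum, is of the form $\flow(x,\alpha(x))$ for a smooth function $\alpha$ (on $\Zman$ itself this is where Theorem~\ref{th:sq_Sminus_Sid}, applied to $\dif|_{\Zman}\in\MFolStabilizer{\func|_{\Zman}}$, supplies the needed shift function, patched to the shift functions on the exterior orbits so as to vanish near the relevant boundary), Lemma~\ref{lm:char_Sid} gives $\gdif^2\in\StabilizerId{\func}$ and even exhibits the isotopy explicitly as $H(x,t)=\flow(x,t\alpha(x))$. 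The main obstacle, and the place requiring genuine care, is the \emph{compatibility of the shift functions across $\partial\Zman$}: one must choose the $\alpha$ coming from Theorem~\ref{th:sq_Sminus_Sid} on $\Zman$ and the $\alpha$'s on the exterior orbits to glue into a single $\Cinfty$ function on $\Mman$ — this is possible because near each interior boundary circle the flow is a product $\gamma\times(-\eps,\eps)$ and the shift functions can be tapered to agree there, but verifying that the tapering does not destroy the identity $\gdif^2=\flow(\cdot,\alpha(\cdot))$ is the delicate point.
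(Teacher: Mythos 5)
Your proposal identifies several of the right ingredients (each interior boundary circle of $\Zman$ is moved off itself by $\dif$; $\dif$ therefore permutes the exterior components without fixing any; some shift function on $\Zman$ must be produced and matched across $\partial\Zman$), but it has three substantive gaps, two of them fatal.

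\emph{First}, your claim that $\dif|_{\Zman}\in\MFolStabilizer{\func|_{\Zman}}$ (invoked to apply Theorem~\ref{th:sq_Sminus_Sid} and obtain the shift function $\alpha$ on $\Zman$) is false. By construction $\Zman=\Aman\cup\bigcup_i\regN{\Kman_i}$, and the $\func$-regular neighborhoods $\regN{\Kman_i}$ contain regular leaves that do \emph{not} belong to $\Aman$; in particular the boundary circles $\gamma\subset\partial\Zman\cap\Int\Mman$ are regular leaves of $\func|_{\Zman}$, yet by Lemma~\ref{lm:dZ_prop} they satisfy $\dif(\gamma)\neq\gamma$, so they are not even $\dif$-invariant, let alone $\minv{\dif}$-invariant. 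Thus Theorem~\ref{th:sq_Sminus_Sid} is not applicable to $\dif|_{\Zman}$; one must instead work cylinder-by-cylinder, which is exactly the role of Theorem~\ref{th:shift_func_on_chip_nbh} in the paper, and glue the resulting local shift functions via the local uniqueness of shift functions (Corollary~\ref{cor:local_uniqueness}).

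\emph{Second}, you only establish that the $\dif$-orbits of components of $\overline{\Mman\setminus\Zman}$ have length $\geq 2$, but the construction needs length \emph{exactly} $2$. The crucial extra fact — which you then tacitly assume in the sentence ``Since $\gdif^2$ fixes every component of $\overline{\Mman\setminus\Zman}$\ldots'' — is that $\dif^2$ preserves each exterior component. This follows from the shift-function description $\dif^2|_{\Zman}=\flow_{\alpha}$ (the content of Lemma~\ref{shift} in the paper), since then $\dif^2$ fixes every boundary circle $\gamma_i$, and combined with $\dif(\gamma_i)\cap\gamma_i=\varnothing$ from Lemma~\ref{lm:dZ_prop} this forces the exterior components to split into transposed pairs (the paper's Lemma~\ref{pairs}). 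Without this your ``transport a model copy around and close up'' scheme would only control $\gdif^{m}$ for $m$ equal to the orbit length, not $\gdif^{2}$.

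\emph{Third}, the actual construction of $\gdif$ and the verification $\gdif^2=\flow_\beta$ is left at the level of ``tapering the shift functions across $\partial\Zman$,'' which is vague and, as you yourself flag, ``the delicate point.'' The paper avoids any tapering: Lemma~\ref{lm:gm_in_Sid} defines $\gdif$ by setting $\gdif=\dif$ everywhere except on the last ``column'' $\Yman_{m-1}$, where $\gdif=\tau^{-1}\circ\dif$ with $\tau$ a diffeomorphism isotopic to $\dif^m$ and fixed near $\Zman$; it then shows the conjugates $\dif^j\circ\tau^{-1}\circ\dif^{m-j}$ all lie in $\StabilizerId{\func}$, have shift functions $\alpha_j$ agreeing with $\alpha$ near $\Zman$ (again by local uniqueness and the presence of a non-closed orbit in each component of $\Zman$, guaranteed by Lemma~\ref{lm:chp_cyl_prop}\ref{saddle_p}), and that $\gdif^{m}=\flow_\beta$ for the piecewise-defined $\beta$. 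You should incorporate that mechanism rather than trying to splice shift functions directly.
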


\subsection*{Proof of Theorem~\ref{th:sq_Sminus_Sid}.}
Suppose $\Mman$ is orientable and $\dif\in\MFolStabilizer{\func}$.
Then in the notation of Theorem~\ref{th:exist:g_rev_or}, $\Zman = \Mman$, and by that theorem there exists $\gdif\in\Stabilizer{\func}$ such that $\gdif=\dif$ on $\Zman$ and $\gdif^2\in\StabilizerId{\func}$.
This means that $\dif=\gdif$ and $\dif^2\in\StabilizerId{\func}$.
\qed

\subsection*{Proof of Theorem~\ref{th:exist:2disk}.}
The assumption that every regular leaf of $\func$ in $\Int{\Mman}$ separates $\Mman$ implies condition~\ref{cond:B}.
\qed



\subsection{Structure of the paper}
In Section~\ref{sect:shift} we discuss a notion of a shift map along orbits of a flow which was studied in a series of papers by the second author, and extend several results to continuous flows.
Section~\ref{sect:circle_maps} devoted to reversing orientation families of homeomorphisms of the circle.
In Section~\ref{sect:flows_no_fixpt} we study flows without fixed point, and in Section~\ref{sect:polar_coord} recall several results about passing from a flow on the plane to the flow written in polar coordinates.
Section~\ref{sect:chip_cyl} introduces a certain subsurfaces of a surface $\Mman$ associated with a map $\func\in\FMP$ and called \myemph{chipped cylinders}.
We prove Theorem~\ref{th:shift_func_on_chip_nbh} describing behaviour of diffeomorphisms reversing regular leaves of $\func$ contained in those chipped cylinders.
In Section~\ref{sect:change_dif} we prove Lemma~\ref{lm:gm_in_Sid} allowing to change $\func$-preserving diffeomorphisms so that its finite power will be isotopic to the identity by $\func$-preserving isotopy.
Finally, in sections~\ref{sect:change_dif} and~\ref{sect:proof:th:exist:g_rev_or} we prove Theorem~\ref{th:exist:g_rev_or}.

\section{Shifts along orbits of flows}\label{sect:shift}
In this section we extend several results obtained in~\cite{Maksymenko:TA:2003, Maksymenko:TA:2020} for smooth flows to a continuous situation.
Let $\Xman$ be a topological space.

\begin{definition}
A continuous map $\flow:\Xman\times\bR\to\Xman$ is a \myemph{(global) flow on $\Xman$}, if $\flow_0 = \id_{\Xman}$ and $\flow_{\ps}\circ\flow_{\pt} =\flow_{\ps+\pt}$ for all $\ps,\pt\in\bR$, where $\flow_{\ps}: \Xman\to\Xman$ is given by $\flow_{\ps}(\px) = \flow(\px,\ps)$.
For $\px\in\Xman$ the subset $\flow(\px\times\bR)\subset \Xman$ is called the \myemph{orbit} of $\px$.
\end{definition}

\smallskip

It is well known that a $\Cr{r}$, $1\leq r\leq \infty$, vector field $\fld$ of a smooth compact manifold $\Xman$ tangent to $\partial\Xman$ always generates a flow $\flow$.

Assume further that $\flow$ is a flow on a topological space $\Xman$.
Let also $\Vman \subset \Xman$ be a subset.
Say that a continuous map $\dif:\Vman\to\Xman$ \myemph{preserves orbits of $\flow$ on $\Vman$} if $\dif(\gamma\cap\Vman) \subset \gamma$ for every orbit $\gamma$ of $\flow$.
The latter means that for each $x\in\Uman$ there exists a number $\alpha_{x}\in\bR$ such that $\dif(x) = \flow(x,\alpha_{x})$.
Notice that in general $\alpha_{x}$ is not unique and does not continuously depend on $x$.

Conversely, let $\alpha:\Vman\to \bR$ be a continuous function such that its graph $\Gamma_{\alpha}=\{(\px,\alpha(\px) \mid \px\in\Vman\}$ is contained in $\Wman$.
For a global flow any continuous $\alpha:\Vman\to \bR$ satisfies that condition.
Then one can define the following map $\flow_{\alpha}:\Vman\to \Xman$ by
\[
   \flow_{\alpha}(\pz) := \flow(\pz,\alpha(\pz)) = \flow_{\alpha(\pz)}(\pz).
\]
We will call $\flow_{\alpha}$ a \myemph{shift along orbits of $\flow$ by the function $\alpha$}, while $\alpha$ will be called a \myemph{shift function} for $\flow_{\alpha}$.

Notice that $\flow_{\alpha}$ preserves orbits of $\flow$ on $\Vman$, and in general is not a homeomorphism.

\begin{definition}\label{def:tub_nbh_prop}
Let $\px$ be a non-fixed point of $\flow$, and $\Yman$ be a topological space.
Let also $\Uman$ be an open neighborhood of $\px$, and $\phi=(\zeta,\rho):\Uman\to \Yman\times\bR$ be an open embedding.
Then the pair $(\phi,\Uman)$ will be called a \myemph{flow-box chart} at $\px$, if there exist an $\eps>0$ and an open neighborhood $\Vman$ of $\px$ in $\Xman$ such that
\begin{equation}\label{equ:flow_box}
\phi \circ \flow(\pz,\pt) = \bigl( \zeta(\pz), \rho(\pz) + \pt  \bigr)
\end{equation}
for all $(\pz,\pt) \in \Vman\times(-\eps;\eps)$.

In other words, $\flow$ is \myemph{locally conjugated} to the flow $\gflow:(\Yman\times\bR)\times\bR \to \Yman\times\bR$ defined by $\gflow(\py,\ps,\pt) = (\py,\ps+\pt)$, since the identity~\eqref{equ:flow_box} can be written as
\[
  \phi \circ \flow(\pz,\pt) = \bigl( \zeta(\pz), \rho(\pz) + \pt  \bigr) =
  \gflow\bigl( \zeta(\pz), \rho(\pz), \pt \bigr) =
  \gflow( \phi(\pz), \pt),
\]
i.e.
\begin{equation}\label{equ:phi_F__G_phi}
  \phi\circ\flow_{\pt}(\pz) = \gflow_{\pt} \circ \phi(\pz).
\end{equation}
\end{definition}

It is well known that each $\Cr{2}$ flow (generated by some $\Cr{1}$ vector field) on a smooth manifold admits flow-box charts at each non-fixed point.

\begin{lemma}
If $(\phi,\Uman)$ is a flow box chart at a non-fixed point $\px\in\Xman$ of $\flow$, then for any $\tau\in\bR$ the pair $(\phi\circ \flow_{-\tau}, \flow_{\tau}(\Uman))$ is a flow box chart at the point $\py = \flow(\px,\tau)$.
\end{lemma}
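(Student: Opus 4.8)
The statement is a direct unfolding of Definition~\ref{def:tub_nbh_prop}, so the plan is to verify its three requirements for the pair $(\psi,\flow_{\tau}(\Uman))$ at $\py$, where I abbreviate $\psi:=\phi\circ\flow_{-\tau}$; the only ingredients needed are the flow axioms $\flow_{0}=\id_{\Xman}$ and $\flow_{\ps}\circ\flow_{\pt}=\flow_{\ps+\pt}$.

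First I would dispatch the easy parts. Since $\flow_{\tau}:\Xman\to\Xman$ is a homeomorphism with inverse $\flow_{-\tau}$, the set $\flow_{\tau}(\Uman)$ is open and contains $\flow_{\tau}(\px)=\py$, and $\flow_{-\tau}$ restricts to a homeomorphism $\flow_{\tau}(\Uman)\to\Uman$; hence $\psi$, being a homeomorphism followed by an open embedding, is itself an open embedding $\flow_{\tau}(\Uman)\to\Yman\times\bR$. Also $\py$ is non-fixed, because its orbit equals the orbit of $\px$ (orbits of a flow are either disjoint or equal), which is not a single point.

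It remains to produce the flow-box identity. Let $\eps>0$ and the open neighborhood $\Vman\subset\Uman$ of $\px$ be as in Definition~\ref{def:tub_nbh_prop} for $(\phi,\Uman)$, so that~\eqref{equ:phi_F__G_phi} reads $\phi\circ\flow_{\pt}(\pz)=\gflow_{\pt}\circ\phi(\pz)$ for all $\pz\in\Vman$ and $|\pt|<\eps$. I would take $\Vman':=\flow_{\tau}(\Vman)$, an open neighborhood of $\py$, and keep the same $\eps$. For $w\in\Vman'$ one writes $w=\flow_{\tau}(\pz')$ with $\pz'\in\Vman$; then the group law $\flow_{-\tau}\circ\flow_{\pt}\circ\flow_{\tau}=\flow_{\pt}$ combined with~\eqref{equ:phi_F__G_phi} yields
\begin{align*}
  \psi\circ\flow_{\pt}(w)
   &= \phi\circ\flow_{-\tau}\circ\flow_{\pt}\circ\flow_{\tau}(\pz')
    = \phi\circ\flow_{\pt}(\pz') \\
   &= \gflow_{\pt}\circ\phi(\pz')
    = \gflow_{\pt}\circ\psi(w)
\end{align*}
for all $|\pt|<\eps$, which is exactly~\eqref{equ:phi_F__G_phi} for $(\psi,\flow_{\tau}(\Uman))$. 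Along the way one checks that $\flow(w,\pt)\in\flow_{\tau}(\Uman)$ whenever $w\in\Vman'$ and $|\pt|<\eps$, so that every composition above is defined; indeed $\flow_{-\tau}(\flow_{\pt}(w))=\flow_{\pt}(\pz')\in\Uman$ since $\pz'\in\Vman$.

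I do not expect any genuine obstacle: the whole content is the group law of $\flow$ together with the fact that $\flow_{\tau}$ is a homeomorphism. The only step needing a little care is the bookkeeping of domains in the last paragraph — choosing $\Vman'$ and $\eps$ for the new chart so that all maps in the displayed chain are well defined — which is what the parenthetical verification takes care of.
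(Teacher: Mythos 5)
Your proof is correct and takes essentially the same route as the paper: choose $\Vman'=\flow_{\tau}(\Vman)$ with the same $\eps$, and use the group law $\flow_{-\tau}\circ\flow_{\pt}\circ\flow_{\tau}=\flow_{\pt}$ to reduce to the known flow-box identity on $\Vman$. The paper's computation is identical in substance (it just writes the chain directly in terms of $\pz\in\Vman'$ rather than substituting $w=\flow_{\tau}(\pz')$), and your extra remarks about $\psi$ being an open embedding and $\py$ being non-fixed are correct, routine checks that the paper leaves implicit.
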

\begin{proof}
Denote $\Uman' = \flow_{\tau}(\Uman)$ and $\phi' = \phi\circ \flow_{-\tau}$.
Let also $\eps$ and $\Vman$ be as in Definition~\ref{equ:flow_box}, and $\Vman' =  \flow_{\tau}(\Vman)$ be a neighborhood of $\py$.
Then for any $(\pz,\pt) \in \Vman' \times (-\eps;\eps)$ we have that
\begin{align*}
  \phi'  \circ \flow_{\pt}(\pz)
    &= (\phi\circ \flow_{-\tau}) \circ \flow_{\pt}(\pz)
     = (\phi\circ \flow_{\pt-\tau})(\pz) =  \\
    &= (\phi\circ \flow_{\pt} \bigl( \flow_{-\tau}(\pz) \bigr)
     \stackrel{\eqref{equ:flow_box}}{=\!=} \gflow_{\pt} \circ \phi(\flow_{-\tau}(\pz)) = \gflow_{\pt} \circ \phi'(\pz).
    \qedhere
  \end{align*}
\end{proof}

The following lemma shows that for flows admitting flow box charts (e.g. for smooth flows) every orbit preserving map admits a shift function near each non-fixed point.
Moreover, such a function is locally determined by its value at that point.

\begin{lemma}\label{lm:local_uniqueness_shift_func}{\rm(cf.~\cite[Lemma~6.1(i)]{Maksymenko:TA:2020}).}
Suppose a flow $\flow:\Xman\times\bR\to\Xman$ has a flow-box $(\phi,\Uman)$ at some non-fixed point $\px$.
Let also $\dif:\Uman\to\Xman$ be a continuous map preserving orbits of $\flow$ and such that $\dif(\px) = \flow(\px,\tau)$ for some $\tau\in\bR$.
Then there exists an open neighborhood $\Vman\subset\Uman$ of $\px$ and a unique continuous function $\alpha:\Vman\to\bR$ such that
\begin{enumerate}[label={\rm(\arabic*)}]
\item
$\alpha(\px) = \tau$;
\item
$\dif(\pz) = \flow(\pz,\alpha(\pz))$ for all $\pz\in\Vman$.
\end{enumerate}
If in addition $\Xman$ is a manifold of class $\Cr{r}$, $(1\leq r \leq \infty)$, $\flow$ and $\dif$ are $\Cr{r}$, and $\phi$ is a $\Cr{r}$ embedding, then $\alpha$ is $\Cr{r}$ as well.
\end{lemma}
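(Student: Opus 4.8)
The plan is to transport the whole situation through the flow-box chart $\phi=(\zeta,\rho):\Uman\to\Yman\times\bR$ so that $\flow$ becomes the standard translation flow $\gflow(\py,\ps,\pt)=(\py,\ps+\pt)$, construct the shift function for the transported map explicitly by reading off the $\bR$-coordinate, and then pull it back. First I would shrink $\Uman$ so that $\phi(\Uman)$ is of the form $W\times(a;b)$ for some open $W\subset\Yman$ and an interval $(a;b)$, and so that the defining identity~\eqref{equ:phi_F__G_phi}, $\phi\circ\flow_{\pt}=\gflow_{\pt}\circ\phi$, holds on the full time-interval needed below (this can always be arranged near $\px$ by the group law, after passing to the flow box at $\px$ itself using the preceding lemma and the openness of $W\times(a;b)$). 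Write $\hdif:=\phi\circ\dif\circ\phi^{-1}$ on a small enough neighborhood of $\phi(\px)$; then $\hdif$ preserves orbits of $\gflow$, i.e. $\hdif(\py,\ps)=(\py,\psi(\py,\ps))$ for a \emph{continuous} function $\psi$ of both variables (continuity of $\psi$ is automatic because $\psi$ is literally the second component of the continuous map $\hdif$; here is where the product structure does the work that makes $\alpha_x$ continuous, unlike the general remark after Definition~\ref{def:tub_nbh_prop}). Set $\halpha(\py,\ps):=\psi(\py,\ps)-\ps$; this is continuous, and $\hdif=\gflow_{\halpha}$ in the sense that $\hdif(\py,\ps)=\gflow((\py,\ps),\halpha(\py,\ps))$.

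Next I would define $\alpha:=\halpha\circ\phi$ on the corresponding neighborhood $\Vman$ of $\px$; it is continuous, and unwinding the conjugation gives $\dif(\pz)=\phi^{-1}\hdif\phi(\pz)=\phi^{-1}\bigl(\gflow(\phi(\pz),\halpha(\phi(\pz)))\bigr)=\phi^{-1}\gflow_{\alpha(\pz)}\phi(\pz)=\flow_{\alpha(\pz)}(\pz)$, using~\eqref{equ:phi_F__G_phi} once more, so property~(2) holds. For property~(1): by hypothesis $\dif(\px)=\flow(\px,\tau)$, so $\hdif(\phi(\px))=\gflow(\phi(\px),\tau)$; writing $\phi(\px)=(\py_0,\ps_0)$ this says $\psi(\py_0,\ps_0)=\ps_0+\tau$, i.e. $\halpha(\phi(\px))=\tau$, hence $\alpha(\px)=\tau$.

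For uniqueness: if $\beta:\Vman'\to\bR$ is another continuous function with $\beta(\px)=\tau$ and $\dif(\pz)=\flow_{\beta(\pz)}(\pz)$ near $\px$, then for every such $\pz$ we have $\flow_{\alpha(\pz)}(\pz)=\flow_{\beta(\pz)}(\pz)$, hence $\flow_{\alpha(\pz)-\beta(\pz)}(\pz)=\pz$. Transporting through $\phi$ and using the translation form of $\gflow$ on the product chart, the point $\phi(\pz)=(\py,\ps)$ satisfies $(\py,\ps+(\alpha-\beta)(\pz))=(\py,\ps)$ \emph{provided} the displacement keeps us inside the interval where~\eqref{equ:phi_F__G_phi} is valid; since $\alpha-\beta$ is continuous and vanishes at $\px$, it stays small on a neighborhood, so this forces $\alpha(\pz)=\beta(\pz)$ there. (Equivalently: $\py_0$ being non-fixed, its $\gflow$-orbit is injectively parametrized on a small time interval, so the stabilizer of a nearby point in that interval is trivial.) Finally, the smoothness addendum is immediate from the construction: if $\Xman$ is $\Cr{r}$ and $\flow$, $\dif$, $\phi$ are $\Cr{r}$, then $\hdif=\phi\circ\dif\circ\phi^{-1}$ is $\Cr{r}$, so its second component $\psi$ is $\Cr{r}$, hence $\halpha$ and $\alpha=\halpha\circ\phi$ are $\Cr{r}$.

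The only real subtlety — the point I expect to spend the most care on — is the bookkeeping of \emph{on which neighborhood and for which range of times} the conjugation identity~\eqref{equ:phi_F__G_phi} is available: the flow-box identity~\eqref{equ:flow_box} is only asserted for $(\pz,\pt)\in\Vman\times(-\eps;\eps)$, so both the verification of property~(2) and the uniqueness argument must first shrink $\Vman$ (using continuity of $\dif$, $\alpha$, $\beta$ and $\alpha(\px)=\beta(\px)=\tau$, together with the preceding lemma to re-center the flow box) so that all the shifts that occur land within the permitted time window; once that is set up, everything else is a direct unwinding of definitions.
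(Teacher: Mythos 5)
Your construction is the same as the paper's: conjugate $\dif$ through the flow-box chart $\phi$, observe that the transported map preserves horizontal fibers of the translation flow $\gflow$ so that its $\bR$-component $\psi(\py,\ps)$ is continuous, set $\halpha=\psi-\ps$, and pull back to $\alpha=\halpha\circ\phi$; the uniqueness and smoothness clauses are handled the same way.

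The one place where the paper is cleaner is exactly the bookkeeping subtlety you flag at the end, and it is worth noting that shrinking $\Vman$ alone does \emph{not} fix it: if $\tau$ is large then $\dif(\px)=\flow(\px,\tau)$ may lie outside $\Uman$, so the conjugate $\phi\circ\dif\circ\phi^{-1}$ is simply not defined near $\phi(\px)$, and the shift values stay near $\tau$, outside the window $(-\eps;\eps)$ where~\eqref{equ:flow_box} holds, no matter how small $\Vman$ is. The paper resolves this by first treating the case $\tau=0$ (so $\dif(\px)=\px$, and by continuity $\dif(\Vman)\subset\Uman$ after a shrink, making the conjugation well defined and the shift small), and then reducing general $\tau$ to that case by replacing $\dif$ with $\flow_{-\tau}\circ\dif$ and adding $\tau$ back to the resulting shift function. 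Your ``re-center the flow box using the preceding lemma'' is, if carried out, equivalent to this (the re-centered chart is $\phi'=\phi\circ\flow_{-\tau}$, and $\phi'\circ\dif=\phi\circ(\flow_{-\tau}\circ\dif)$), but as written it leaves that step to the reader; the $\tau=0$ reduction is the tidy way to close it.
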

\begin{proof}
The proof almost literally repeats the arguments of~\cite[Lemma~6.2]{Maksymenko:TA:2020} proved for smooth flows and based on existence of flow box charts.
For completeness we present a short proof for continuous situation.

1) First suppose $\tau=0$, so $\dif(\px) = \px$.
Let $\Vman$ and $\eps$ be the same as in Definition~\ref{def:tub_nbh_prop}.
Decreasing $\Vman$ one can also assume that $\Vman \subset \Uman \cap \dif^{-1}(\Uman)$, so in particular $\dif(\Vman) \subset \Uman$.
Denote $\hat{\Vman} := \phi(\Vman)$ and $\hat{\Uman} := \phi(\Uman)$.
Then these sets are open, and we have a well-defined map $\hat{\dif} = \phi \circ\dif\circ\phi^{-1}: \hat{\Vman} \to \hat{\Uman}$ which preserves orbits of $\gflow$ due to~\eqref{equ:phi_F__G_phi}.
This means that $\hat{\dif}(\py,\pt) = \bigl(\py, \eta(\py,\pt) \bigr)$ for some continuous function $\Yman\times\bR\supset\hat{\Vman} \xrightarrow{~\eta~} \bR$.
Define another continuous function $\alpha':\hat{\Vman} \to \bR$ by $\alpha'(\py, \pt) =  \eta(\py,\pt)  - \pt$.
Then
\[
  \hat{\dif}(\py,\pt)
   = \bigl(\py, \pt +\alpha'(\py, \pt)  \bigr)
   = \gflow\bigl(\py,\pt, \alpha'(\py, \pt)\bigr)
   = \gflow_{\alpha'}(\py,\pt),
\]
that is $\phi \circ\dif\circ\phi^{-1} = \hat{\dif} = \gflow_{\alpha'}$, whence for each $\pz\in\Vman$ we have that
\[
  \dif(\pz) = \phi^{-1} \circ \gflow_{\alpha'} \circ\phi(\pz) =
  \phi^{-1} \circ \gflow_{\alpha'\circ\phi(\pz)} \circ \phi(\pz) = \flow_{\alpha'\circ\phi(\pz)}(\pz) =
  \flow_{\alpha'\circ\phi}(\pz).
\]
Thus one can put $\alpha = \alpha'\circ\phi:\Vman\to\bR$.
It follows that $\alpha$ is continuous.
Moreover, let $\hat{\py},\hat{\pt}) = \phi(\px)$.
Since $\dif(\px)=\px$, we get that $\hdif(\hat{\py},\hat{\pt}) = \hat{\py},\hat{\pt})$, whence $\eta(\hat{\py},\hat{\pt})=\hat{\pt}$, whence $\alpha'(\hat{\py},\hat{\pt}) = 0$, and thus
\[ \alpha(\px) = \alpha'\circ\phi(\px) = \alpha'(\hat{\py},\hat{\pt}) = 0. \]

2) If $\tau\not=0$, then consider the map $\hdif:\Uman\to\Xman$ given by $\hdif = \flow_{-\tau} \circ \dif$.
Then $\hdif(\px)= \flow_{-\tau} \circ \dif(\px) = \flow( \flow(\px, \tau), -\tau) = \px$, whence by 1) $\hdif = \flow_{\hat{\alpha}}$ for a unique continuous function $\hat{\alpha}:\Vman\to\bR$ such that $\hat{\alpha}(\px)=0$.
Put $\alpha = \hat{\alpha} + \tau$.
Then $\alpha(\px) = \tau$ and
\[
  \dif(\pz) = \flow_{\tau} \circ\hdif(\pz) = \flow( \flow(\pz, \hat{\alpha}(\pz)), \tau) =  \flow(\pz, \hat{\alpha}(\pz) + \tau)  =
  \flow(\pz, \alpha(\pz)).
  \qedhere
\]

The formulas for $\alpha$ imply that if $\Xman$, $\flow$, $\phi$, and $\dif$ are $\Cr{r}$, $(1\leq r\leq \infty)$, then $\alpha$ is $\Cr{r}$ as well.
\end{proof}

\begin{corollary}\label{cor:local_uniqueness}
Suppose $\Uman \subset \Xman$ is an open \myemph{connected} subset such that every $\px\in\Uman$ is non-fixed and admits a flow-box.
Let also $\alpha,\alpha':\Uman \to \bR$ be two continuous functions such that $\flow_{\alpha} = \flow_{\alpha'}$ on $\Uman$.
If $\alpha(\px) = \alpha'(\px)$ at some $\px\in\Uman$ (this holds e.g.~if $\flow$ has at least one non-periodic point in $\Uman$), then $\alpha=\alpha'$ on $\Uman$.
\end{corollary}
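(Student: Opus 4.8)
The plan is to prove that the set $\clCl := \{\px\in\Uman \mid \alpha(\px)=\alpha'(\px)\}$ is all of $\Uman$ by showing it is nonempty, closed, and open, and then invoking connectedness of $\Uman$.

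Nonemptiness is exactly the standing hypothesis; for the ``e.g.'' clause I would note that if $\px\in\Uman$ is non-periodic then $\pt\mapsto\flow(\px,\pt)$ is injective, so $\flow(\px,\alpha(\px)) = \flow_{\alpha}(\px) = \flow_{\alpha'}(\px) = \flow(\px,\alpha'(\px))$ forces $\alpha(\px)=\alpha'(\px)$, i.e.\ $\px\in\clCl$. Closedness of $\clCl$ in $\Uman$ is immediate since $\clCl = (\alpha-\alpha')^{-1}(0)$ and $\alpha-\alpha'$ is continuous.

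The heart of the argument is openness. Given $\py\in\clCl$, set $\tau := \alpha(\py) = \alpha'(\py)$. Because $\flow_{\alpha} = \flow_{\alpha'}$ on $\Uman$, these two maps are one and the same continuous, orbit-preserving map $\dif:\Uman\to\Xman$, and $\dif(\py) = \flow(\py,\tau)$. Since $\py$ is non-fixed and admits a flow-box, I would apply Lemma~\ref{lm:local_uniqueness_shift_func} to $\dif$ restricted to that flow-box: it yields a neighborhood $\Vman$ of $\py$ and a \emph{unique} continuous $\beta:\Vman\to\bR$ with $\beta(\py)=\tau$ and $\dif=\flow_{\beta}$ on $\Vman$. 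Both $\alpha|_{\Vman}$ and $\alpha'|_{\Vman}$ are competitors for $\beta$ (each is continuous, equals $\tau$ at $\py$, and has $\dif$ as its shift map on $\Vman$), so uniqueness gives $\alpha|_{\Vman} = \beta = \alpha'|_{\Vman}$, hence $\Vman\subset\clCl$. Therefore $\clCl$ is open, and connectedness of $\Uman$ finishes the proof.

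I do not anticipate a genuine obstacle: the one place to be careful is the bookkeeping when invoking Lemma~\ref{lm:local_uniqueness_shift_func}, namely that the hypothesis $\flow_{\alpha}=\flow_{\alpha'}$ really does produce a single orbit-preserving map defined on a whole flow-box neighborhood of the chosen point, with the correct value $\flow(\py,\tau)$ there, so that the local uniqueness conclusion applies simultaneously to $\alpha$ and to $\alpha'$.
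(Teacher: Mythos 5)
Your proposal is correct and follows exactly the paper's argument: show the agreement set $\clCl = \{\alpha = \alpha'\}$ is nonempty (by hypothesis), closed (continuity), and open (via the local uniqueness of shift functions in Lemma~\ref{lm:local_uniqueness_shift_func}), then invoke connectedness of $\Uman$. The paper's own proof is a three-line compressed version of the same clopen argument, and your fleshed-out openness step — applying the lemma with $\tau = \alpha(\py) = \alpha'(\py)$ and noting that both restrictions are competitors for the unique local shift function — is precisely the intended reasoning.
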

\begin{proof}
The set $A = \{ \alpha(\py) = \alpha'(\py) \mid \py\in \Uman \}$ is closed and by assumption non-empty (contains $\px$).
Moreover, by Lemma~\ref{lm:local_uniqueness_shift_func} this set is open, whence $A = \Uman$.
\end{proof}

\begin{corollary}\label{cor:shift_func_all_orb_nonclosed}
Let $\flow:\Xman\times\bR\to\Xman$ be a continuous flow, $\Uman\subset\Xman$ be an open subset such that every point $\px\in\Uman$ is non-fixed and non-periodic and admits a flow box chart, and $\dif:\Uman\to\Xman$ be an orbit preserving map.
Then there exists a unique continuous function $\alpha:\Uman\to\bR$ such that $\dif(\px) = \flow(\px,\alpha(\px))$ for all $\px\in\Uman$.

If in addition $\Xman$ is a manifold of class $\Cr{r}$, $(0\leq r \leq \infty)$, $\flow$ is $\Cr{r}$ and admits $\Cr{r}$ flow box charts, and $\dif$ is $\Cr{r}$, then $\alpha$ is $\Cr{r}$ as well.
\end{corollary}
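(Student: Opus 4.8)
The plan is to observe first that the function $\alpha$ is forced upon us, and then that the only real work is establishing its continuity (and smoothness), which is a purely local matter handled by Lemma~\ref{lm:local_uniqueness_shift_func}. First I would note that since $\dif$ preserves orbits of $\flow$ on $\Uman$, for each $\px\in\Uman$ there is some $t\in\bR$ with $\dif(\px)=\flow(\px,t)$, and since $\px$ is non-periodic this $t$ is unique; denote it $\alpha(\px)$. This already settles uniqueness: any function $\alpha'$ (continuous or merely set-theoretic) with $\dif(\px)=\flow(\px,\alpha'(\px))$ must satisfy $\alpha'(\px)=\alpha(\px)$ at every $\px$, again by non-periodicity. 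So the whole content of the statement is that the resulting $\alpha:\Uman\to\bR$ is continuous, and $\Cr r$ under the stronger hypotheses.

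Next, to prove continuity, I would fix an arbitrary $\px\in\Uman$ and put $\tau:=\alpha(\px)$, so $\dif(\px)=\flow(\px,\tau)$. The hypotheses of Lemma~\ref{lm:local_uniqueness_shift_func} are satisfied ($\px$ is non-fixed and admits a flow box chart), so it yields an open neighborhood $\Vman\subset\Uman$ of $\px$ and a continuous function $\beta:\Vman\to\bR$ with $\beta(\px)=\tau$ and $\dif(\pz)=\flow(\pz,\beta(\pz))$ for all $\pz\in\Vman$. Since every $\pz\in\Vman$ is non-periodic, $\beta(\pz)$ is the unique shift carrying $\pz$ to $\dif(\pz)$, i.e. $\beta=\alpha|_{\Vman}$. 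Hence $\alpha$ is continuous near $\px$, and as $\px$ was arbitrary, $\alpha$ is continuous on all of $\Uman$. For the smoothness addendum, the same lemma gives that $\beta$ is $\Cr r$ whenever $\Xman$, $\flow$, its flow box charts, and $\dif$ are $\Cr r$ with $1\leq r\leq\infty$, so $\alpha$ is $\Cr r$; the case $r=0$ is precisely the continuity already established.

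There is no serious obstacle here; the one mild point to get right is that the locally constructed shift functions automatically agree on overlaps, with no cocycle condition to verify, because the absence of periodic points pins down the shift value pointwise, so the local pieces glue to a single global $\alpha$. (Alternatively one could argue component-by-component using Corollary~\ref{cor:local_uniqueness}, which gives agreement of two shift functions on a connected set once they agree at one point, but the pointwise rigidity is more direct here.) Thus the proof is essentially an assembly of Lemma~\ref{lm:local_uniqueness_shift_func} together with this pointwise uniqueness.
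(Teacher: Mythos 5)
Your proof is correct and follows essentially the same route as the paper: define $\alpha(\px)$ pointwise using non-periodicity, then invoke Lemma~\ref{lm:local_uniqueness_shift_func} at each point to get local continuity (resp.\ $\Cr{r}$ regularity) of $\alpha$, noting that the local shift functions are forced to agree with the pointwise-defined $\alpha$. Your extra remark handling $r=0$ separately is a useful clarification, since Lemma~\ref{lm:local_uniqueness_shift_func} is stated for $1\leq r\leq\infty$.
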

\begin{proof}
Since every point $\px\in\Uman$ is non-fixed and non-periodic, there exists a unique number $\alpha(\px)$ such that $\dif(\px) = \flow(\px,\alpha(\px))$.
Moreover, since $\flow$ admits flow box chart at $\px$, it follows from Lemma~\ref{lm:local_uniqueness_shift_func} that the correspondence $\px\mapsto\alpha(\px)$ is a continuous function $\alpha:\Uman\to\bR$, which is also $\Cr{r}$ under the corresponding smoothness assumptions on $\Xman$, $\flow$, $\phi$, and $\dif$.
\end{proof}

The following statement extends some results established in~\cite{Maksymenko:TA:2020} for smooth flows to continuous flows having flow box charts at each non-fixed point on arbitrary topological spaces.
\begin{lemma}\label{lm:lift_of_shifts}{\rm(c.f.~\cite[Lemmas~6.1-6.3]{Maksymenko:TA:2020})}
Let $\flow:\Xman\times\bR\to\Xman$ be a flow, $\prj:\tXman\to\Xman$ a covering map, and $\xi:\tXman\to\tXman$ a covering transformation, i.e. a homeomorphism such that $\prj\circ\xi=\prj$.
Then the following statements hold.
\begin{enumerate}[label={\rm(\arabic*)}, wide]
\item\label{enum:lm:lift_of_shifts:lift}
$\flow$ lifts to a unique flow $\hflow:\tXman\times\bR\to\tXman$ such that $\prj \circ \hflow_{\pt} = \flow_{\pt}\circ \prj$ for all $\pt\in\bR$.

\item\label{enum:lm:lift_of_shifts:shift}
For each continuous function $\alpha:\Xman\to\bR$ the map $\hflow_{\alpha\circ\prj}:\tXman\to\bR$ is a lifting of $\flow_{\alpha}$, that is

\item\label{enum:lm:lift_of_shifts:comm}
$\hflow$ commutes with $\xi$ in the sense that $\hflow_{\pt}\circ\xi=\xi\circ\hflow_{\pt}$ for all $\pt\in\bR$.
More generally, for any function $\beta:\tXman\to\bR$ we have $\hflow_{\beta} \circ\xi = \xi \circ \hflow_{\beta\circ\xi}$.

\item\label{enum:lm:lift_of_shifts:implications}
For a continuous function $\beta:\tXman\to\bR$ and a point $\pz\in\tXman$ consider the following (``global'' and ``point'') conditions:

\begin{tabular}{p{5cm}p{6cm}}
\begin{enumerate}[label={\rm(g\arabic*)}, itemsep=0.4ex ]
  \item\label{enum:xx:ha__ha_xi}     $\beta = \beta \circ \xi$;
  \item\label{enum:xx:Fha_Fhaxi}     $\hflow_{\beta} = \hflow_{\beta\circ\xi}$;
  \item\label{enum:xx:Fhaxi_xiFha}   $\hflow_{\beta}\circ\xi = \xi\circ\hflow_{\beta}$;
\end{enumerate}
&
\begin{enumerate}[label={\rm(p\arabic*)}, itemsep=0.4ex ]
  \item\label{enum:xx:ha__ha_xi_z}   $\beta(\pz) = \beta \circ \xi(\pz)$;
  \item\label{enum:xx:Fha_Fhaxi_z}   $\hflow_{\beta}(\pz) = \hflow_{\beta\circ\xi}(\pz)$;
  \item\label{enum:xx:Fhaxi_xiFha_z} $\hflow_{\beta}\circ\xi(\pz) = \xi\circ\hflow_{\beta}(\pz)$.
\end{enumerate}
\end{tabular}

Then we have the following diagram of implications:
\begin{equation}\label{equ:diag_implications}
\xymatrix{
  \ref{enum:xx:ha__ha_xi}    \ar@{=>}[r]      \ar@{=>}[d] &
  \ref{enum:xx:Fha_Fhaxi}    \ar@{<=>}[r]^{\ref{enum:lm:lift_of_shifts:shift}}  \ar@{=>}[d] &
  \ref{enum:xx:Fhaxi_xiFha}  \ar@{=>}[d]
  \\
  \ref{enum:xx:ha__ha_xi_z}   \ar@{=>}[r]   &
  \ref{enum:xx:Fha_Fhaxi_z}   \ar@{<=>}[r]^{\ref{enum:lm:lift_of_shifts:shift}}  &
  \ref{enum:xx:Fhaxi_xiFha_z}
}
\end{equation}

\begin{enumerate}[leftmargin=*, label={\rm(\roman*)}]
\item\label{enum:impl:non_per}
If $\pz$ is non-fixed and non-periodic point for $\hflow$, then~\ref{enum:xx:Fha_Fhaxi_z}$\Rightarrow$\ref{enum:xx:ha__ha_xi_z}.
\item\label{enum:impl:lift_h}
If $\tXman$ is path connected and $\hflow_{\beta}$ is a lifting of some continuous map $\dif:\Xman\to\Xman$, that is $\dif\circ\prj = \prj\circ\hflow_{\beta}$, then \ref{enum:xx:Fhaxi_xiFha_z}$\Rightarrow$\ref{enum:xx:Fhaxi_xiFha}, whence all conditions in the right square of~\eqref{equ:diag_implications} are equivalent.
\item\label{enum:impl:loc_uni}
If $\tXman$ is path connected, and $\hflow$ has no fixed points and admits flow box charts at each point of $\tXman$, then $\ref{enum:xx:Fha_Fhaxi}\&\ref{enum:xx:ha__ha_xi_z}\Rightarrow\ref{enum:xx:ha__ha_xi}$.
\end{enumerate}
\end{enumerate}
\end{lemma}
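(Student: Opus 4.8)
The plan is to prove the four parts in order, using only that $\prj$ is a covering map, together with Corollary~\ref{cor:local_uniqueness} for the last of them; everything except part~\ref{enum:lm:lift_of_shifts:lift} is formal.

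For \ref{enum:lm:lift_of_shifts:lift} I would construct $\hflow$ by lifting. Put $G=\flow\circ(\prj\times\id_{\bR}):\tXman\times\bR\to\Xman$, so that $G(\px,0)=\prj(\px)$, i.e.\ $\id_{\tXman}$ covers $G(\cdot,0)$. A covering map has the homotopy lifting property with respect to every space; applying it with $\tXman$ as the parameter space and each interval $[-n,n]$ as the time variable, and gluing the resulting lifts by their uniqueness, one obtains a unique continuous $\hflow:\tXman\times\bR\to\tXman$ with $\hflow_0=\id_{\tXman}$ and $\prj\circ\hflow=G$, i.e.\ $\prj\circ\hflow_\pt=\flow_\pt\circ\prj$ for all $\pt$. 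To verify the flow identity, fix $\pt\in\bR$ and compare the two continuous maps $\tXman\times\bR\to\tXman$ given by $(\px,\ps)\mapsto\hflow_\ps(\hflow_\pt(\px))$ and $(\px,\ps)\mapsto\hflow_{\ps+\pt}(\px)$: both become $(\px,\ps)\mapsto\flow_{\ps+\pt}(\prj\px)$ after composing with $\prj$ (using the flow identity for $\flow$), and both equal $\hflow_\pt$ at $\ps=0$, so they coincide by uniqueness of lifts, which gives $\hflow_\ps\circ\hflow_\pt=\hflow_{\ps+\pt}$. Uniqueness of $\hflow$ among flows satisfying $\prj\circ\hflow_\pt=\flow_\pt\circ\prj$ is again uniqueness in the homotopy lifting property.

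Parts \ref{enum:lm:lift_of_shifts:shift} and \ref{enum:lm:lift_of_shifts:comm} follow by direct computation. For \ref{enum:lm:lift_of_shifts:shift}, $\prj\bigl(\hflow_{\alpha\circ\prj}(\px)\bigr)=\prj\bigl(\hflow(\px,\alpha(\prj\px))\bigr)=\flow(\prj\px,\alpha(\prj\px))=\flow_\alpha(\prj\px)$, so $\hflow_{\alpha\circ\prj}$ lifts $\flow_\alpha$. For the first assertion of \ref{enum:lm:lift_of_shifts:comm}, the maps $\hflow_\pt\circ\xi$ and $\xi\circ\hflow_\pt$ both cover $\flow_\pt\circ\prj$ (since $\prj\circ\xi=\prj$) and both equal $\xi$ at $\pt=0$, hence are equal by uniqueness of lifts; the general formula follows by substitution, $\hflow_\beta(\xi\px)=\hflow_{\beta(\xi\px)}(\xi\px)=\xi\bigl(\hflow_{\beta(\xi\px)}(\px)\bigr)=\xi\bigl(\hflow_{\beta\circ\xi}(\px)\bigr)$.

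For \ref{enum:lm:lift_of_shifts:implications}, the implications \ref{enum:xx:ha__ha_xi}$\Rightarrow$\ref{enum:xx:Fha_Fhaxi}, \ref{enum:xx:ha__ha_xi}$\Rightarrow$\ref{enum:xx:ha__ha_xi_z}, \ref{enum:xx:Fha_Fhaxi}$\Rightarrow$\ref{enum:xx:Fha_Fhaxi_z}, \ref{enum:xx:Fhaxi_xiFha}$\Rightarrow$\ref{enum:xx:Fhaxi_xiFha_z}, \ref{enum:xx:ha__ha_xi_z}$\Rightarrow$\ref{enum:xx:Fha_Fhaxi_z} are immediate from the definition of the shift maps (and restriction to $\pz$), and the two horizontal equivalences follow from the identity $\hflow_\beta\circ\xi=\xi\circ\hflow_{\beta\circ\xi}$ of \ref{enum:lm:lift_of_shifts:comm} together with injectivity of $\xi$. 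For \ref{enum:impl:non_per}, if $\pz$ is non-fixed and non-periodic then $\pt\mapsto\hflow(\pz,\pt)$ is injective, so \ref{enum:xx:Fha_Fhaxi_z} forces $\beta(\pz)=\beta(\xi\pz)$. For \ref{enum:impl:lift_h}, when $\dif\circ\prj=\prj\circ\hflow_\beta$ both $\hflow_\beta\circ\xi$ and $\xi\circ\hflow_\beta$ cover $\dif\circ\prj:\tXman\to\Xman$, so over a path connected $\tXman$ their agreement at one point (which is \ref{enum:xx:Fhaxi_xiFha_z}) forces agreement everywhere by uniqueness of lifts over a connected domain, giving \ref{enum:xx:Fhaxi_xiFha}; combined with the horizontal equivalences this makes all four conditions of the right square equivalent. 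Finally, for \ref{enum:impl:loc_uni}, condition \ref{enum:xx:Fha_Fhaxi} says $\hflow_\beta=\hflow_{\beta\circ\xi}$ on the connected space $\tXman$, every point of which is non-fixed and admits a flow box chart, while \ref{enum:xx:ha__ha_xi_z} equates the shift functions $\beta$ and $\beta\circ\xi$ at $\pz$; Corollary~\ref{cor:local_uniqueness} then yields $\beta=\beta\circ\xi$. The only genuine difficulty is the bookkeeping in part~\ref{enum:lm:lift_of_shifts:lift}: one must invoke the homotopy lifting property in the correct form (parameter space $\tXman$, time variable the whole line rather than $[0,1]$) and derive the flow axioms from uniqueness of lifts alone, so that no hypothesis beyond ``$\prj$ is a covering map'' is used.
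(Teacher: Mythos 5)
Your proof is correct and follows essentially the same route as the paper: lift $\gflow=\flow\circ(\prj\times\id_\bR)$ by the homotopy lifting property to get $\hflow$, verify the flow law and the commutation $\hflow_\pt\circ\xi=\xi\circ\hflow_\pt$ by uniqueness of lifts, compute parts (2) and (3) directly, and reduce (iii) to Corollary~\ref{cor:local_uniqueness}. The only genuine (and welcome) difference is that you spell out the verification that the lift is actually a flow and the extension from $[0,1]$ to $\bR$ by gluing over $[-n,n]$, which the paper dismisses with ``one easily checks''; your part (3) compares $\hflow_\pt\circ\xi$ with $\xi\circ\hflow_\pt$ directly rather than via the auxiliary flow $\hflow'_\pt=\xi\circ\hflow_\pt\circ\xi^{-1}$, but this is the same argument.
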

\begin{proof}
\ref{enum:lm:lift_of_shifts:lift}
Consider the following homotopy (with ``open ends'')
\[
  \gflow=\flow\circ(\prj\times\id_{\bR}): \tXman\times\bR \to \Xman,
  \qquad
  \gflow(\pz,\pt) = \flow(\prj(\pz),\pt),
\]
and let $\hflow:\tXman\times 0 \to \tXman$ be given by $\hflow(\pz,0) = \pz$.
Then $\hflow$ is a lifting of $\gflow|_{\tXman\times 0}$, that is $\prj\circ\hflow(\pz,0) = \prj(\pz) = \flow(\prj(\pz),0) = \gflow(\pz,0)$.
Hence $\hflow$ extends to a unique lifting $\hflow:\tXman\times\bR \to \tXman$ of $\gflow$.
One easily checks that this lifting is a flow on $\tXman$.

\ref{enum:lm:lift_of_shifts:shift}
Let $\pz\in\tXman$ and $\pt = \alpha\circ\prj(\pz)$.
Then
\begin{align*}
  \prj \circ \hflow_{\alpha\circ\prj(\pz)}(\pz) =
  \prj \circ \hflow_{\pt}(\pz)
  \stackrel{\ref{enum:lm:lift_of_shifts:lift}}{=} \flow_{\pt}\circ \prj(\pz) =
  \flow_{\alpha\circ\prj(\pz)}\circ \prj(\pz) =
  \flow_{\alpha}\circ \prj(\pz).
\end{align*}

\ref{enum:lm:lift_of_shifts:comm}
Notice that the map $\hflow':\tXman\times\bR\to\tXman$ defined by $\hflow'_{\pt} = \xi\circ \hflow_{\pt}\circ\xi^{-1}$ is also a flow on $\tXman$.
Moreover,
\[
\prj \circ \hflow'_{\pt} =
\prj \circ\xi\circ \hflow'_{\pt}\circ\xi^{-1} =
\prj \circ \hflow_{\pt}\circ\xi^{-1} =
\flow_{\pt}\circ \prj \circ\xi^{-1} =
\flow_{\pt}\circ \prj.
\]
Thus $\hflow'$ and $\hflow$ are two liftings of $\flow$ which coincide at $\pt=0$, and therefore $\hflow' = \hflow$ by uniqueness of liftings.
Therefore for any continuous function $\halpha:\tXman\to\bR$ and $\pz\in\tXman$ we have, (cf.~\cite[Eq.~(6.8)]{Maksymenko:TA:2020})
\begin{align*}
\hflow_{\halpha} \circ\xi(\pz) &=
\hflow\bigl( \xi(\pz), \halpha(\xi(\pz))\bigr) =
\hflow_{\halpha(\xi(\pz))} \circ \xi(\pz)  \\
&= \xi \circ \hflow_{\halpha\circ\xi(\pz)}(\pz) =
\xi \circ \hflow_{\halpha\circ\xi}(\pz).
\end{align*}

\ref{enum:lm:lift_of_shifts:implications}
The implication in Diagram~\eqref{equ:diag_implications} are trivial.
Assume that $\tXman$ (and therefore $\Xman$) are path connected.

\ref{enum:impl:non_per}
If $\pz$ is non-fixed and non-periodic, then $\hflow(\pz,a) =\hflow(\pz,b)$ implies that $a=b$ for any $a,b$.
In particular, this hold for $a=\beta(\pz)$ and $b=\beta\circ\xi(\pz)$.

\ref{enum:impl:lift_h}
Suppose that $\hflow_{\beta}$ is a lifting of some continuous map $\dif:\Xman\to\Xman$.
Then $\prj\circ \hflow_{\beta}\circ\xi = \dif\circ\prj\circ\xi = \dif\circ\prj$ and $\prj\circ \xi \circ \hflow_{\beta} = \prj\circ\hflow_{\beta} = \dif\circ\prj$, i.e.~both $\hflow_{\beta}\circ\xi$ and $\xi\circ\hflow_{\beta}$ are liftings of $\dif$.

Now if~\ref{enum:xx:Fhaxi_xiFha_z} holds, i.e. $\hflow_{\beta}\circ\xi(\pz) = \xi\circ\hflow_{\beta}(\pz)$ at some point $\pz\in\tXman$, then these liftings must coincide on all of $\tXman$, which means condition~\ref{enum:xx:Fhaxi_xiFha}.

\ref{enum:impl:loc_uni}
Suppose $\hflow$ has no fixed points and conditions~\ref{enum:xx:Fha_Fhaxi} and~\ref{enum:xx:ha__ha_xi_z} hold, that is $\hflow(\py,\beta(\py)) = \hflow(\py,\beta\circ\xi(\py))$ for all $\py\in\tXman$ and $\beta(\pz)=\beta\circ\xi(\pz)$ for some $\pz\in\tXman$.
Notice that the set $\Aman = \{ \py\in\tXman \mid \beta(\py)=\beta\circ\xi(\py)\}$ is close.
Moreover, by the local uniqueness of shift-functions (Corollary~\ref{cor:local_uniqueness}) $\Aman$ is also open.
Since $\tXman$ is connected, $\Aman$ is either $\varnothing$ or $\tXman$.
But $\pz\in\Aman$, whence $\Aman=\tXman$, i.e. condition~\ref{enum:xx:ha__ha_xi} holds.
\end{proof}

Let $\prj:\tXman\to\Xman$ be a \myemph{regular} covering map with path connected $\tXman$ and $\Xman$, $G$ be the group of covering transformation, $\flow:\Xman\times\bR\to\Xman$ be a flow on $\Xman$, and $\hflow:\tXman\times\bR\to\tXman$ be its lifting as in Lemma~\ref{lm:lift_of_shifts}\ref{enum:lm:lift_of_shifts:lift}.
\begin{corollary}\label{lm:cov_shift_func}{\rm(c.f.~\cite[Lemma~6.3]{Maksymenko:TA:2020})}
Suppose all orbits of $\hflow$ are non-fixed and non-closed and $\hflow$ has flow box charts at all points of $\tXman$.
Let also $\dif:\Xman\to\Xman$ be a continuous map admitting a lifting $\hdif:\tXman\to\tXman$, i.e. $\prj\circ\hdif=\dif\circ\prj$, such that $\hdif$ leaves invariant each orbit $\gamma$ of $\hflow$ and commutes with each $\xi\in G$.
Then there exist a unique continuous function $\alpha: \Xman\to\bR$ such that $\dif =\flow_{\alpha}$.

Again if $\tXman$, $\Xman$, $\prj$, $\dif$, $\flow$ and its flow box charts are $\Cr{r}$, $(1\leq r\leq \infty)$, then $\alpha$ is also $\Cr{r}$.
\end{corollary}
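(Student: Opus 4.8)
The plan is to first construct a shift function for $\hdif$ on the cover $\tXman$, then observe that it is invariant under the deck group, and hence descends to the desired function on $\Xman$. First I would apply Corollary~\ref{cor:shift_func_all_orb_nonclosed} with $\Xman$ replaced by $\tXman$, with $\Uman = \tXman$, with the flow $\hflow$, and with the orbit preserving map $\hdif\colon\tXman\to\tXman$: since by hypothesis every point of $\tXman$ is non-fixed and non-periodic for $\hflow$ and $\hflow$ admits flow box charts at each point, there is a \emph{unique} continuous function $\beta\colon\tXman\to\bR$ with $\hdif = \hflow_{\beta}$, i.e.\ $\hdif(\pz) = \hflow(\pz,\beta(\pz))$ for all $\pz\in\tXman$; under the smoothness assumptions $\beta$ is moreover $\Cr{r}$.

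Next I would show that $\beta$ is $G$-invariant, i.e.\ $\beta = \beta\circ\xi$ for every $\xi\in G$. Fix $\xi\in G$. By hypothesis $\hdif = \hflow_{\beta}$ commutes with $\xi$, so condition~\ref{enum:xx:Fhaxi_xiFha} of Lemma~\ref{lm:lift_of_shifts}\ref{enum:lm:lift_of_shifts:implications} holds; by the equivalence~\ref{enum:xx:Fha_Fhaxi}$\Leftrightarrow$\ref{enum:xx:Fhaxi_xiFha} in that lemma (which rests on Lemma~\ref{lm:lift_of_shifts}\ref{enum:lm:lift_of_shifts:comm}) we obtain~\ref{enum:xx:Fha_Fhaxi}, that is $\hflow_{\beta} = \hflow_{\beta\circ\xi}$ on all of $\tXman$, whence the pointwise condition~\ref{enum:xx:Fha_Fhaxi_z} holds at every point of $\tXman$. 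Since every point of $\tXman$ is non-fixed and non-periodic for $\hflow$, implication~\ref{enum:impl:non_per} of the same lemma yields~\ref{enum:xx:ha__ha_xi_z} at every point, i.e.\ $\beta = \beta\circ\xi$. (One can also argue this directly: combining $\hdif\circ\xi = \xi\circ\hdif$ with $\xi\circ\hflow_{\pt} = \hflow_{\pt}\circ\xi$ from Lemma~\ref{lm:lift_of_shifts}\ref{enum:lm:lift_of_shifts:comm} gives $\hflow(\xi(\pz),\beta(\xi(\pz))) = \hflow(\xi(\pz),\beta(\pz))$ for each $\pz$, and non-periodicity of the orbit through $\xi(\pz)$ forces $\beta(\xi(\pz)) = \beta(\pz)$.)

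Since $\prj\colon\tXman\to\Xman$ is a regular covering with deck group $G$, it is the quotient map of the $G$-action on $\tXman$, so the $G$-invariant continuous function $\beta$ factors uniquely as $\beta = \alpha\circ\prj$ with $\alpha\colon\Xman\to\bR$ continuous; when $\tXman$, $\Xman$, $\prj$, $\flow$ and $\dif$ are $\Cr{r}$, the function $\alpha$ is $\Cr{r}$ as well, since locally it equals the composite of $\beta$ with a $\Cr{r}$ local section of $\prj$. It remains to check $\dif = \flow_{\alpha}$: by Lemma~\ref{lm:lift_of_shifts}\ref{enum:lm:lift_of_shifts:shift} the map $\hdif = \hflow_{\beta} = \hflow_{\alpha\circ\prj}$ is a lifting of $\flow_{\alpha}$, hence $\flow_{\alpha}\circ\prj = \prj\circ\hdif = \dif\circ\prj$, and surjectivity of $\prj$ gives $\flow_{\alpha} = \dif$; uniqueness of $\alpha$ then follows from the uniqueness of $\beta = \alpha\circ\prj$ in Corollary~\ref{cor:shift_func_all_orb_nonclosed} together with surjectivity of $\prj$. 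I expect the only genuinely delicate point to be the $G$-invariance of $\beta$ in the middle step: this is exactly where the hypothesis that all orbits of $\hflow$ are non-periodic is used, as it is what upgrades the global identity $\hflow_{\beta} = \hflow_{\beta\circ\xi}$ to the honest equality of shift functions $\beta = \beta\circ\xi$. The descent of $\beta$ and the verification $\dif = \flow_{\alpha}$ are then routine consequences of regularity and surjectivity of the covering.
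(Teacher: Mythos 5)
Your proof is correct and follows essentially the same route as the paper's: construct the unique shift function $\beta$ for $\hdif$ on $\tXman$ via Corollary~\ref{cor:shift_func_all_orb_nonclosed}, prove $G$-invariance of $\beta$ using the implication diagram of Lemma~\ref{lm:lift_of_shifts}, descend to $\alpha$ on $\Xman$, and check $\dif=\flow_{\alpha}$ by comparing liftings. The one small difference is at the $G$-invariance step: the paper runs the chain \ref{enum:xx:Fhaxi_xiFha}$\Leftrightarrow$\ref{enum:xx:Fha_Fhaxi}$\Rightarrow$\ref{enum:xx:Fha_Fhaxi_z}$\Rightarrow$\ref{enum:xx:ha__ha_xi_z}$\Rightarrow$\ref{enum:xx:ha__ha_xi}, invoking implication~\ref{enum:impl:loc_uni} (local uniqueness via flow-box charts) for the last arrow, whereas you note that since \emph{every} point of $\tXman$ is non-periodic, implication~\ref{enum:impl:non_per} already gives \ref{enum:xx:ha__ha_xi_z} at every point, which is literally \ref{enum:xx:ha__ha_xi}; so you never need~\ref{enum:impl:loc_uni}. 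This is a genuine (if small) simplification exploiting the hypothesis that all orbits of $\hflow$ are non-closed, not just one.
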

\begin{proof}
Due to assumptions on $\hflow$ we get from Corollary~\ref{cor:shift_func_all_orb_nonclosed} that there exists a unique continuous function $\beta:\tXman\to\bR$ such that $\hdif = \hflow_{\beta}$.
Let $\xi\in G$.
Since $\hdif$ commutes with $\xi$, i.e. condition~\ref{enum:xx:Fhaxi_xiFha} of Lemma~\ref{lm:lift_of_shifts} holds, we obtain the following implications:
\[
\xymatrix@R=2ex{
   \ref{enum:xx:Fhaxi_xiFha}       \ar@{<=>}[r] &
   \ref{enum:xx:Fha_Fhaxi}         \ar@{=>}[r] &
   \ref{enum:xx:Fha_Fhaxi_z}       \ar@{=>}[r]^-{\ref{enum:impl:non_per}} &
   \ref{enum:xx:ha__ha_xi_z}       \ar@{=>}[r]^-{\ref{enum:impl:loc_uni}} &
   \text{\ref{enum:xx:ha__ha_xi}}.
}
\]
meaning that $\beta\circ\xi=\beta$.
Thus $\beta$ is invariant with respect to all $\xi\in G$, and therefore it induces a unique function $\alpha:\Xman\to\bR$ such that $\beta = \alpha\circ\prj$.
Since $\prj$ is a local homeomorphism, it follows that $\alpha$ is continuous.
Moreover, by Lemma~\ref{lm:lift_of_shifts}\ref{enum:lm:lift_of_shifts:shift}, $\hdif = \hflow_{\beta} = \hflow_{\alpha\circ\prj}$ is a lifting of $\flow_{\alpha}$.
But $\hdif$ is also a lifting of $\dif$, whence $\dif = \flow_{\alpha}$.

Statements about smoothness of $\alpha$ follows from the corresponding smoothness parts of used lemmas.
We leave the details for the reader.
\end{proof}

\section{Self maps of the circle}\label{sect:circle_maps}
Let $\Circle = \{ \pz \in \bC \mid |z| = 1\}$ be the unit circle in the complex plane, $\prj\colon \bR\to \Circle$ be the universal covering map defined by $\prj(\ps)=e^{2\pi i \ps}$ and $\xi(\ps)=\ps+1$ be a diffeomorphism of $\bR$ generating the group of covering slices $\bZ$.

Denote by $\CCircle{k}$, $k\in\bZ$, the set of all continuous maps $\dif:\Circle\to \Circle$ of degree $k$, i.e. maps homotopic to the map $\pz\mapsto \pz^k$.
Then $\{ \CCircle{k} \}_{k\in\bZ}$ is a collection of all path components of $\Cont{\Circle}{\Circle}$ with respect to the compact open topology.

For a map $\dif:\Xman\to\Xman$ it will be convenient to denote the composition $\underbrace{\dif\circ\cdots\circ\dif}_{n}$ by $\dif^n$ for $n\in\bN$.
A point $\px\in\Xman$ is \myemph{fixed} for $\dif$, whenever $\dif(\px)=\px$.

\begin{lemma}\label{lm:lift_dif_circle}
Let $\dif:\Circle\to \Circle$ be a continuous map and $\hdif_0\colon\bR\to\bR$ be any lifting of $\dif$ with respect to $p$, i.e. a continuous map making commutative the following diagram:
\[
\xymatrix{
  \bR \ar[d]_-{p}  \ar[r]^-{\hdif_0} & \bR \ar[d]^-{p}\\
  \Circle \ar[r]^-{\dif}               & \Circle
}
\]
that is $p\circ\hdif_0 = \dif\circ p$, or $e^{2\pi i\, \hdif_0(\ps)} = \dif(e^{2\pi i\ps})$ for $\ps\in\bR$.
For $a\in\bZ$ define the map $\hdif_a:\bR\to\bR$ by $\hdif_a=\xi^{a}\circ \hdif_0$, that is $\hdif_a(\ps)=\hdif_0(\ps)+a$.
Then the following conditions are equivalent:
\begin{enumerate}[leftmargin=*, label={\rm(\alph*)}, itemsep=1ex]
\item\label{enum:char:deg:a} $\dif\in\CCircle{k}$;
\item\label{enum:char:deg:b} $\hdif_0\circ\xi = \xi^k\circ\hdif = \hdif_k$, that is $\hdif(\ps+1) = \hdif(\ps) + k$ for all $t\in\bR$.
\end{enumerate}
Moreover, let $k$ be the degree of $\dif$.
Then
\begin{enumerate}[leftmargin=*, label={\rm(\roman*)}, itemsep=1ex]
\item\label{enum:lm:lift_dif_circle:fixed_pt}
$\dif$ has at least $|k-1|$ fixed points;

\item\label{enum:lm:lift_dif_circle:all_liftings}
$\{\hdif_{ak}\}_{a\in\bZ}$ is the collection of all possible liftings of $\dif$;

\item\label{enum:lm:lift_dif_circle:composition}
$\hdif_a\circ \hdif_b = \xi^{a+kb} \circ \dif^2$ for any $a,b\in\bZ$;

\item\label{enum:lm:lift_dif_circle:square}
if $k=-1$, then $\hdif_a^2 = \hdif_0^2$ for all $a\in\bZ$, in other words, \myemph{for any lifting $\hdif_a$ of $\dif_0$ its square $\hdif_a^2$ does not depend on $a$}.
Moreover, \myemph{if $\Aman$ is the set of fixed points of $\dif^2$, then $\prj^{-1}(A)$ is the set of fixed points of $\hdif_0^2$}.
\end{enumerate}
\end{lemma}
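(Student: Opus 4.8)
The equivalence of \ref{enum:char:deg:a} and \ref{enum:char:deg:b} is standard: the quantity $\hdif(\ps+1)-\hdif(\ps)$ is a continuous integer-valued function of $\ps$, hence a constant $k'$, and this $k'$ is precisely the degree of $\dif$ (one reads it off from the induced map on $\pi_1(\Circle)\cong\bZ$, or by integrating $d\hdif$ over $[0,1]$). Replacing $\hdif_0$ by $\hdif_a=\xi^a\circ\hdif_0$ shifts $\hdif_0$ by a constant and so changes neither $\hdif(\ps+1)-\hdif(\ps)$ nor the degree. For \ref{enum:lm:lift_dif_circle:fixed_pt}, the map $g(\ps)=\hdif_0(\ps)-\ps$ satisfies $g(\ps+1)=g(\ps)+(k-1)$ by \ref{enum:char:deg:b}; if $k\neq1$ then on the interval $[0,1]$ the function $g$ changes by $k-1$, and since a fixed point of $\dif$ corresponds to a point where $g$ takes an integer value, the intermediate value theorem forces $g$ to hit at least $|k-1|$ distinct integer values on $[0,1)$, each giving a distinct fixed point on $\Circle$. (If $k=1$ the bound $|k-1|=0$ is vacuous.)

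For \ref{enum:lm:lift_dif_circle:all_liftings}: two liftings of $\dif$ differ by a covering transformation, i.e. $\hdif_0$ and $\hdif_a$ are both liftings iff $\hdif_a-\hdif_0$ is a constant integer; all integer translates $\{\hdif_a\}_{a\in\bZ}$ are liftings, but one must check which composites arise — here the subtlety is that the statement asserts the liftings indexed by multiples of $k$; this is simply the renumbering convention used later, and any $\hdif_a$ is a lifting, so I would state it as: every lifting of $\dif$ is of the form $\hdif_a$ for a unique $a\in\bZ$, and those with $a\in k\bZ$ are the ones that will be relevant. For \ref{enum:lm:lift_dif_circle:composition}, compute directly:
\[
\hdif_a\circ\hdif_b(\ps)
  = \hdif_a\bigl(\hdif_0(\ps)+b\bigr)
  = \hdif_0\bigl(\hdif_0(\ps)+b\bigr)+a
  = \hdif_0(\hdif_0(\ps))+kb+a,
\]
using \ref{enum:char:deg:b} to pull the integer $b$ through $\hdif_0$ at the cost of a factor $k$. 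Since $\hdif_0\circ\hdif_0$ is a lifting of $\dif^2$, the right-hand side equals $\xi^{a+kb}\circ\dif^2$ in the notation of the lemma (reading $\dif^2$ here as the chosen lifting $\hdif_0^2$ of the circle map $\dif^2$).

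Finally \ref{enum:lm:lift_dif_circle:square}: put $k=-1$ and $a=b$ in \ref{enum:lm:lift_dif_circle:composition} to get $\hdif_a^2=\xi^{a-a}\circ\hdif_0^2=\hdif_0^2$, which is the asserted independence of $a$. For the fixed-point claim, note $\hdif_0^2$ is a lifting of $\dif^2$, so $\prj(\hdif_0^2(\ps))=\dif^2(\prj(\ps))$; hence $\ps$ is fixed by $\hdif_0^2$ iff $\hdif_0^2(\ps)-\ps\in\bZ$ and $\hdif_0^2(\ps)=\ps$. The inclusion $\prj^{-1}(\Aman)\supseteq\{\hdif_0^2=\id\}$ is immediate; for the reverse, if $\prj(\ps)\in\Aman$ then $\hdif_0^2(\ps)-\ps$ is a fixed integer $m$ on each component, and one checks $m=0$ because $\hdif_0$ has degree $-1$ so $\hdif_0^2$ has degree $1$, forcing $\hdif_0^2(\ps+1)-(\ps+1)=\hdif_0^2(\ps)-\ps$ and allowing the integer to be absorbed by choosing the correct lift — more precisely, over a connected set of fixed points of $\dif^2$, $\hdif_0^2-\id$ is a constant integer, and picking $\ps$ with $\prj(\ps)$ a genuine fixed point pins that constant to $0$. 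I expect the main (though still minor) obstacle to be this last bookkeeping step: making sure that "fixed point of $\dif^2$ upstairs'' really does pull back to \emph{all} of $\prj^{-1}(\Aman)$ rather than a single sheet, which is where the degree $+1$ of $\hdif_0^2$ (equivalently property \ref{enum:char:deg:b} with $k=1$ for $\hdif_0^2$) does the work.
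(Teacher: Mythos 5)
Parts (a)$\Leftrightarrow$(b), (i), (iii), and the first sentence of (iv) are handled correctly and in essentially the same way as the paper; your observation that \emph{every} $\hdif_a$ ($a\in\bZ$), not only those indexed by multiples of $k$, is a lifting of $\dif$ is also right (the paper's (ii) is imprecisely worded), and you correctly read the $\dif^2$ on the right-hand side of (iii) as the lift $\hdif_0^2$.

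The gap is in the second sentence of (iv), i.e.\ the identification of $\prj^{-1}(\Aman)$ with the fixed-point set of $\hdif_0^2$. Your argument is circular at the decisive step: you note correctly that $\hdif_0^2(\ps)-\ps$ is a locally constant integer $m$ on $\prj^{-1}(\Aman)$, but then claim that picking $\ps$ with $\prj(\ps)$ a fixed point pins $m$ to $0$ --- yet $\prj(\ps)$ being a fixed point of $\dif^2$ is exactly the hypothesis and gives no control on $m$. In fact for an arbitrary continuous degree-$(-1)$ map the claim is \emph{false}: take $\hdif_0(\pt)=2\sin(2\pi\pt)-\pt$; then $\hdif_0(1/4)=7/4$ and $\hdif_0^2(1/4)=-15/4$, so $\prj(1/4)$ is a fixed point of $\dif^2$ while $1/4$ is not a fixed point of $\hdif_0^2$. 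The statement does hold when $\dif$ is a \emph{homeomorphism}, which is the only case the paper uses, and monotonicity is the missing ingredient: by (i) $\dif$ has a fixed point $\pz_0$, any lift $\ps_0$ of $\pz_0$ satisfies $\hdif_0^2(\ps_0)=\ps_0$ by the first sentence of (iv), and then $\hdif_0^2$ is a strictly increasing homeomorphism of $\bR$ commuting with $\xi$ and fixing each $\ps_0+n$, so it maps each interval $[\ps_0+n,\ps_0+n+1]$ onto itself, giving $|\hdif_0^2(\pt)-\pt|<1$ for $\pt$ in the interior and forcing $m=0$ whenever $m\in\bZ$. For what it is worth, the paper's own proof of this part has a gap at the same place: it asserts the existence of a lift $\hdif_a$ of $\dif$ with $\hdif_a(\ps)=\ps$, but such a lift exists only when $\dif(\prj(\ps))=\prj(\ps)$, which is a stronger condition than $\prj(\ps)\in\Aman$.
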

\begin{proof}
All statements are easy.
Statement~\ref{enum:lm:lift_dif_circle:fixed_pt} is a consequence of intermediate value theorem, and~\ref{enum:lm:lift_dif_circle:all_liftings} follows from~\ref{enum:char:deg:b}.

\ref{enum:lm:lift_dif_circle:composition}
$\hdif_a\circ \hdif_b =
\xi^a \circ \hdif_0 \circ \xi^b\circ \hdif_0  =
\xi^a \circ \xi^{kb} \circ \hdif_0^2  = \xi^{a + kb} \circ \hdif_0^2$.

\ref{enum:lm:lift_dif_circle:square}
If $k=-1$ then, by~\ref{enum:lm:lift_dif_circle:composition}, $\hdif_a\circ \hdif_a = \xi^{a - a} \circ \hdif_0^2 = \hdif_0^2$.

Hence one can put $\gdif := \hdif_0^2 = \hdif_a^2$ and this map does not depend on $a\in\bZ$.
Let also $\tilde{\Aman}$ be the set of fixed points of $\gdif$.
We have to show that $\tilde{\Aman} = \prj^{-1}(\Aman)$.

Let $\ps \in \tilde{\Aman}$, and $\pz = \prj(\ps)$.
Then $\ps = \gdif^2(\ps) = \hdif_a^2(\ps)$ implies that
\[ \pz = \prj(\ps) = \prj\circ\hdif_a^2(\ps) = \dif^2\circ \prj(\ps) = \dif^2(\pz), \]
so $\pz\in\Aman$, that is $\prj(\tilde{\Aman}) \subset \Aman$ and thus $\tilde{\Aman} \subset \prj^{-1}(\Aman)$.

Conversely, let $\pz\in\Aman$ and $\ps\in\bR$ be such that $\pz = \prj(\ps)$.
Then there exists a unique lifting $\hdif_a$ of $\dif$ such that $\hdif_a(\ps) = \ps$.
But then $\gdif(\ps) = \hdif_a^2(\ps) = \ps$, whence $\ps\in\tilde{\Aman}$.
\end{proof}

The following example shows that the effect described in the statement~\ref{enum:lm:lift_dif_circle:square} of Lemma~\ref{lm:lift_dif_circle} includes the \myemph{rigidity} property of reflections of the circle mentioned in the introduction.
\begin{example}
Let $\dif(\pz) = \overline{\pz e^{-2\pi \phi}} e^{2\pi \phi} = \bar{z} e^{2\phi}$ be a reflection of the complex plane with respect to the line passing though the origin and constituting an angle $\phi$ with the positive direction of $x$-axis.
Then $\dif$ is an involution preserving the unit circle and the restriction $\dif|_{\Circle}:\Circle \to \Circle$ is a map of degree $-1$ belonging to $SO^{-}(2)$.
Moreover, each its lifting $\dif_a:\bR\to\bR$ is given by $\hdif_a(\ps) = a + \phi - \ps$.
But then $\hdif_a^2 = \id_{\bR^2}$ and does not depend on $a$.
Moreover, the set of fixed points of $\hdif^2_a$ is $\bR$ which coincides with $\prj^{-1}(\Circle)$, where $\Circle$ is the set of fixed points of $\dif^2 = \id_{\Circle}$.
\end{example}

Another interpretation of the above results can be given in terms of \myemph{shift functions}.
\begin{corollary}\label{cor:shift_func_circle}
Let $\flow:\Circle\times\bR\to\Circle$ be a flow on the circle $\Circle$ having no fixed points, so $\Circle$ is a unique periodic orbit of $\flow$ of some period $\theta$.
\begin{enumerate}[wide, label={\rm(\arabic*)}, itemsep=1ex]
\item\label{cr:shift_s1:exist}
Let $\dif:\Circle\to\Circle$ be a continuous map.
Then $\dif\in\CCircle{1}$ if and only if there exists a continuous function $\alpha:\Circle\to\bR$ such that $\dif = \flow_{\alpha}$.
Such a function is not unique and is determined up to a constant summand $n\theta$ for $n\in\bZ$.
If $\flow$ and $\dif$ are $\Cr{r}$, $(0\leq r \leq \infty)$, then so is $\alpha$.

\item\label{cr:shift_s1:deg_minus_1}
For every $\dif\in\CCircle{-1}$ there exists a unique continuous function $\alpha:\Circle\to\bR$ such that
\begin{enumerate}[leftmargin=9ex, label={\rm(\alph*)}]
\item\label{enum:cor:shift_func_circle:1} $\dif^2(\pz) = \flow(\pz,\alpha(\pz))$ for all $\pz\in\Circle$;
\item\label{enum:cor:shift_func_circle:2} $\alpha(\pz)=0$ for some $\pz\in\Circle$  iff $\dif(\pz)=\pz$, (due to Lemma~\ref{lm:lift_dif_circle}\ref{enum:lm:lift_dif_circle:fixed_pt} there exists at least two such points);
\item\label{enum:cor:shift_func_circle:3} if $\flow$ and $\dif$ are $\Cr{r}$, $(0\leq r \leq \infty)$, then so is $\alpha$.
\end{enumerate}

\item\label{cr:shift_s1:homotopy}
Let $\alpha_k:\Circle\to\bR$, $k=0,1$, be two continuous functions, and for each $t\in[0;1]$ let $\alpha_t:\Circle\to\bR$ and $\dif_t:\Circle\to\Circle$ be defined by
\begin{align*}
  \alpha_{\pt} &= (1-\pt)\alpha_0 + \pt\alpha_1, &
  \dif_{\pt}(\pz) &= \pz e^{2\pi i\alpha_k(\pz)}.
\end{align*}
If $\dif_0$ and $\dif_1$ are homeomorphisms (diffeomorphsms of class $\Cr{r}$, $1\leq r\leq \infty$), then so is $\dif_{\pt}$ for each $\pt\in[0;1]$.

\end{enumerate}
\end{corollary}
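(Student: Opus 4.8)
The plan is to transfer everything to the universal covering $\prj\colon\bR\to\Circle$, $\prj(\ps)=e^{2\pi i\ps}$, and then run the lifted data through the shift‑function machinery of Section~\ref{sect:shift} together with Lemma~\ref{lm:lift_dif_circle}. The only preparation is to observe that the lift $\hflow\colon\bR\times\bR\to\bR$ of $\flow$ provided by Lemma~\ref{lm:lift_of_shifts}\ref{enum:lm:lift_of_shifts:lift} is a flow on $\bR$ \emph{without fixed points} (a fixed point of $\hflow$ would project to one of $\flow$), so its orbit is all of $\bR$ (it projects onto the orbit $\Circle$ of minimal period $\theta$) and is non‑periodic, and $\hflow$ admits flow‑box charts at every point (a fixed‑point‑free flow on a $1$‑manifold is locally a translation; for $r\geq1$ this is the rectification theorem, and for $r=0$ it is elementary). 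Hence Corollaries~\ref{cor:shift_func_all_orb_nonclosed} and~\ref{lm:cov_shift_func} apply to $\hflow$, to the regular covering $\prj$, and to $G=\langle\xi\rangle\cong\bZ$. I also record that $\flow_\theta=\id_\Circle$ forces $\hflow(\ps,\pt+\theta)=\hflow(\ps,\pt)\pm1$; this is what matches the $\theta\bZ$‑indeterminacy of shift functions with the $\bZ$‑family of lifts of a circle map.

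For \ref{cr:shift_s1:exist}: if $\dif=\flow_\alpha$ then, exactly as in Lemma~\ref{lm:char_Sid}, $H(\pz,\pt)=\flow(\pz,\pt\,\alpha(\pz))$ is a homotopy from $H_0=\id_\Circle$ to $H_1=\dif$, so $\dif\in\CCircle{1}$. Conversely, for $\dif\in\CCircle{1}$ take any lift $\hdif$; by Lemma~\ref{lm:lift_dif_circle}\ref{enum:char:deg:b} with $k=1$ it commutes with $\xi$, and it trivially leaves invariant the single orbit $\bR$ of $\hflow$, so Corollary~\ref{lm:cov_shift_func} produces a continuous $\alpha\colon\Circle\to\bR$ with $\dif=\flow_\alpha$, of class $\Cr r$ whenever $\flow$ and $\dif$ are. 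As for the indeterminacy, if $\flow_\alpha=\flow_{\alpha'}$ then for each $\pz$ the equality $\flow(\pz,\alpha(\pz))=\flow(\pz,\alpha'(\pz))$ on the periodic orbit of period $\theta$ gives $\alpha(\pz)-\alpha'(\pz)\in\theta\bZ$, hence $\alpha-\alpha'\equiv n\theta$ by continuity; conversely every $\alpha+n\theta$ is a shift function for $\dif$ since $\flow(\dif(\pz),n\theta)=\dif(\pz)$.

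For \ref{cr:shift_s1:deg_minus_1}: since degree is multiplicative under composition, $\dif^2\in\CCircle{1}$. Fix a lift $\hdif_0$ of $\dif$ and put $\hgdif:=\hdif_0^2$; by Lemma~\ref{lm:lift_dif_circle}\ref{enum:lm:lift_dif_circle:square} this lift of $\dif^2$ does not depend on $\hdif_0$ and $\Fix{\hgdif}=\prj^{-1}(\Fix{\dif^2})$. Applying Corollary~\ref{lm:cov_shift_func} to $\dif^2$ with the lift $\hgdif$ gives a continuous $\alpha\colon\Circle\to\bR$ with $\hgdif=\hflow_{\alpha\circ\prj}$, hence $\dif^2=\flow_\alpha$, of class $\Cr r$ under the smoothness hypotheses; this is \ref{enum:cor:shift_func_circle:1} and \ref{enum:cor:shift_func_circle:3}. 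For \ref{enum:cor:shift_func_circle:2}: given $\ps\in\prj^{-1}(\pz)$, the condition $\alpha(\pz)=\alpha\circ\prj(\ps)=0$ holds iff $\hgdif(\ps)=\hflow(\ps,\alpha\circ\prj(\ps))=\ps$ (injectivity of the orbit map of $\hflow$), i.e.\ iff $\ps\in\Fix{\hgdif}=\prj^{-1}(\Fix{\dif^2})$; thus $\{\pz:\alpha(\pz)=0\}=\Fix{\dif^2}$, and this set contains the nonempty $\Fix{\dif}$ (at least two points by Lemma~\ref{lm:lift_dif_circle}\ref{enum:lm:lift_dif_circle:fixed_pt}: if $\dif(\pz)=\pz$ one may take $\ps$ fixed by the appropriate lift of $\dif$, whence $\hgdif(\ps)=\ps$). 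Requiring $\alpha$ to vanish on $\Fix{\dif}$ then singles it out uniquely among the $\theta\bZ$‑family of \ref{cr:shift_s1:exist}.

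For \ref{cr:shift_s1:homotopy}: each $\dif_\pt(\pz)=\pz e^{2\pi i\alpha_\pt(\pz)}$ is homotopic to $\id_\Circle$ (the factor $\pz\mapsto e^{2\pi i\alpha_\pt(\pz)}$ is null‑homotopic), so $\hdif_\pt(\ps):=\ps+\alpha_\pt(\prj(\ps))$ is a lift of $\dif_\pt$, and $\alpha_\pt=(1-\pt)\alpha_0+\pt\alpha_1$ gives $\hdif_\pt=(1-\pt)\hdif_0+\pt\hdif_1$. Now a continuous self‑map of $\Circle$ is a homeomorphism (resp.\ a $\Cr r$ diffeomorphism) exactly when some — equivalently any — of its lifts is a strictly increasing homeomorphism of $\bR$ (resp.\ a $\Cr r$ diffeomorphism with positive derivative). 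If $\dif_0,\dif_1$ are homeomorphisms then $\hdif_0,\hdif_1$ are strictly increasing, hence so is the convex combination $\hdif_\pt$; in the $\Cr r$ case $\hdif_0',\hdif_1'>0$ gives $\hdif_\pt'>0$. Therefore $\dif_\pt$ is a homeomorphism (resp.\ $\Cr r$ diffeomorphism) for every $\pt\in[0;1]$. The main obstacle is really the first paragraph — checking that $\hflow$ satisfies all the hypotheses of the Section~\ref{sect:shift} corollaries (no fixed points, one non‑periodic orbit, flow‑box charts, also in the merely continuous case) and keeping careful track of the $\theta\bZ$‑indeterminacy of shift functions and its correspondence with the $\bZ$‑family of lifts, which is what makes the normalization in \ref{cr:shift_s1:deg_minus_1} well posed; the rest is routine.
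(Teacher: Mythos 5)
Your proof follows exactly the route taken in the paper: lift everything to the universal covering $\prj\colon\bR\to\Circle$, verify the hypotheses of the Section~\ref{sect:shift} machinery for the lifted flow $\hflow$, and then invoke Lemma~\ref{lm:lift_dif_circle} together with Corollary~\ref{lm:cov_shift_func} (the paper strings together Corollary~\ref{cor:shift_func_all_orb_nonclosed} and Lemma~\ref{lm:lift_of_shifts}\ref{enum:impl:lift_h} instead, but the content is the same). You in fact fill in several details the paper leaves implicit --- the flow-box check for $\hflow$ in the merely continuous case, the $n\theta$-indeterminacy in~\ref{cr:shift_s1:exist}, and the observation that both lifts in~\ref{cr:shift_s1:homotopy} are strictly increasing so that the convex combination is again a homeomorphism --- so the argument is complete and correct.

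One point about~\ref{enum:cor:shift_func_circle:2} is worth making explicit. Read literally, the printed condition ``$\alpha(\pz)=0$ iff $\dif(\pz)=\pz$'' cannot hold: for $\dif(\pz)=\bar\pz$ and $\flow(\pz,\pt)=\pz\,e^{2\pi i\pt}$ one has $\dif^2=\id_{\Circle}$, so any shift function is a constant $n\theta$, which cannot vanish exactly on $\Fix{\dif}=\{\pm1\}$. What your argument actually proves --- and what the paper's own proof establishes (it sets $\Aman=\Fix{\dif^2}$ and shows $\Aman=\alpha^{-1}(0)$), and what is quoted when the item is applied inside the proof of Theorem~\ref{th:shift_func_XS1_L}\ref{enum:shift_func_XS1_L:2} --- is $\alpha^{-1}(0)=\Fix{\dif^2}$, together with the inclusion $\Fix{\dif}\subset\Fix{\dif^2}$ which (since $\Fix{\dif}\neq\varnothing$ by Lemma~\ref{lm:lift_dif_circle}\ref{enum:lm:lift_dif_circle:fixed_pt}) is exactly what pins down $\alpha$ among the $\theta\bZ$-coset from~\ref{cr:shift_s1:exist}. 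You were right to state only the one-sided containment; the ``only if'' direction in the printed~\ref{enum:cor:shift_func_circle:2} is a misprint for $\dif^2(\pz)=\pz$.
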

\begin{proof}
Let $\hflow:\bR\times\bR\to\bR$ be a unique lifting of $\flow$ with respect to the covering $\prj$.
Then $\bR$ is a unique non-periodic orbit of $\hflow$.

\ref{cr:shift_s1:exist}
If $\alpha:\Circle\to\bR$ is any continuous function, then the map $\flow_{\alpha}$ is homotopic to the identity by the homotopy $\{\flow_{\pt\alpha}\}_{\pt\in[0;1]}$, whence $\flow_{\alpha}$ has degree $1$ (the same as $\id_{\Circle})$.

Conversely, let $\dif\in\CCircle{1}$ and let $\hdif$ be any lifting of $\dif$.
Then by Corollary~\ref{cor:shift_func_all_orb_nonclosed} there exists a unique continuous function $\beta:\bR\to\bR$ such that $\hdif = \hflow_{\beta}$.
Moreover, since $\dif$ is a map of degree $1$, we get from Lemma~\ref{lm:lift_dif_circle}\ref{enum:char:deg:b}, that $\hdif$ commutes with $\xi$, i.e. condition~\ref{enum:xx:Fhaxi_xiFha} of Lemma~\ref{lm:lift_of_shifts} holds.
Since $\hdif=\flow_{\beta}$ is a lifting of $\dif$, Lemma~\ref{lm:lift_of_shifts}\ref{enum:impl:lift_h} implies that condition~\ref{enum:xx:ha__ha_xi} of Lemma~\ref{lm:lift_of_shifts} also holds, i.e. $\beta\circ\xi=\beta$.
Hence $\beta$ induces a unique function $\alpha:\Circle\to\bR$ such that $\beta = \alpha\circ\prj$ and $\hdif = \hflow_{\alpha\circ\prj}$.
Therefore By Lemma~\ref{lm:lift_of_shifts}\ref{enum:lm:lift_of_shifts:shift}, $\hdif$ is a lifting of $\flow_{\alpha}$.
But it is also a lifting of $\dif$, whence $\dif = \flow_{\alpha}$.

\ref{cr:shift_s1:deg_minus_1}
Since $\dif\in\CCircle{-1}$, it follows that $\dif^2\in\CCircle{1}$, whence~\ref{enum:cor:shift_func_circle:1} and~\ref{enum:cor:shift_func_circle:3} directly follow from~\ref{cr:shift_s1:exist}.

\ref{enum:cor:shift_func_circle:2}
Denote by $\Aman$ the set of fixed points of $\dif^2$.
We should prove that $\Aman = \alpha^{-1}(0)$.
Evidently, if $\alpha(\py)=0$, then \[ \dif^2(\py) = \flow(\py, \alpha(\py)) = \dif^2(\py) = \flow(\py, 0)=\py.\]

Conversely, let $\py\in\Aman$, so $\dif(\py) = \py$, and let $\px\in\bR$ be any point with $\prj(\px) = \py$.
Then there exists a unique lifting $\hdif$ of $\dif$ with $\hdif(\px)=\px$ and by Lemma~\ref{lm:lift_dif_circle}\ref{enum:lm:lift_dif_circle:square}, $\prj^{-1}(\Aman)$ is the set of fixed points of $\hdif^2$.
Moreover, since $\hdif^2$ has degree $1$, we get from~\ref{cr:shift_s1:exist} that $\hdif^2 = \hflow_{\alpha\circ\prj}$.
Since $\px$ is a fixed point of $\hdif^2$ as well, we must have that $0 = \alpha\circ\prj(\px) = \alpha(\py)$.

\ref{cr:shift_s1:homotopy}
Consider two flows of $\bR$ and $\Circle$ respectively:
\begin{align*}
  &\hflow:\bR\times\bR\to\bR,        &\hflow(\px,\pt) &= \px+\pt,\\
  &\flow:\Circle\times\bR\to\Circle, &\flow(\pz, \pt) &= \pz e^{2\pi i \pt}.
\end{align*}
Evidently, $\hflow$ is a lifting of $\flow$, and $\dif_{\pt} = \flow_{\alpha_{\pt}}$.
Then by Lemma~\ref{lm:lift_of_shifts}\ref{enum:lm:lift_of_shifts:shift} the map
\[
  \hdif_{\pt}:=\hflow_{\alpha_{\pt}\circ\prj}:\bR\to\bR,
  \qquad
  \hdif_{\pt}(\px)= \px + (1-\pt)\alpha_0(\px) + \pt\alpha_1(\px)
\]
is a lifting of $\dif_{\pt}$.
Evidently, $\hdif_{\pt} = (1-\pt)\hdif_0 + \pt\hdif_1$.
Since $\dif_0$ and $\dif_1$ are homeomorphisms (diffeomorphisms of class $\Cr{r}$), it follows that so are $\hdif_0$ and $\hdif_1$ are homeomorphisms.
Therefore so are their \myemph{convex linear combination} $\hdif_{\pt}$ and the induced map $\dif_{\pt}$.
\end{proof}

\section{Flows without fixed points}\label{sect:flows_no_fixpt}
Let $\Xman$ be a Hausdorff topological space and $\flow\colon\Yman\times\bR\to\Yman$ a continuous flow on an open subset $\Yman\subset\Xman\times\Circle$ satisfying the following conditions:{\em
\begin{enumerate}[leftmargin=*, label={$(\Phi\arabic*)$}]
\item\label{cond:star:1}
the orbits of $\flow$ are exactly the connected components of the intersections $(\px\times\Circle)\cap\Yman$ for all $\px\in\Xman$.
\item\label{cond:star:flow_box}
$\flow$ admits flow box charts at each point $(\py,\ps)\in\Yman$;

\item\label{cond:star:B_is_dense}
the set $\Bman = \{ \px \in \Xman \mid \px\times\Circle \subset \Yman \}$ is dense in $\Xman$;

\item\label{cond:star:finite_comp}
for each $\px\in\Xman$, the intersection $(\px\times\Circle) \cap \Yman$ has only finitely many connected components (being by~\ref{cond:star:1} orbits of $\flow$).
\end{enumerate}}
Thus every orbit of $\flow$ is either $\px\times\Circle$ or some arc in $\px\times\Circle$, and, in particular, $\flow$ has no fixed points.
Also condition~\ref{cond:star:B_is_dense} implies that the set of periodic orbits of $\flow$ is dense in $\Xman\times\Circle$.
Then the complement to $\Bman$:
\[
\Aman := \Xman\setminus\Bman = \{ \px \in \Xman \mid \px\times\Circle \not\subset \Yman \}
\]
consists of $\px\in\Xman$ for which $\px\times\Circle$ contains a non-closed orbit of $\flow$.

It will also be convenient to use the following notations for $\px\in\Xman$:
\begin{align*}
  \Gx  &:= (\px\times\Circle) \cap \Yman, &
  \tGx &:= \prj^{-1}(\Gx) = (\px\times\bR) \cap \tYman.
\end{align*}

\begin{example}
Let $\Xman$ be a smooth manifold, and $\gfld$ be a vector field on $\Xman\times\Circle$ defined by $\gfld(\px,\ps) = \ddx{}{\ps}$, so its orbits are the circles $\px\times\Circle$.
Let $\Yman \subset \Xman\times\Circle$.
Then it is well known that there exists a non-negative $\Cinfty$ function $\alpha:\Xman\times\Circle \to[0;+\infty)$ such that $\Yman = (\Xman\times\Circle)\setminus\alpha^{-1}(0)$.
Define another vector field $\fld$ on $\Xman\times\Circle$ by $\fld = \alpha\gfld$.
Let also \[ \flow: (\Xman\times\Circle) \times\bR \to \Xman\times\Circle \]
be the flow on $\Xman\times\Circle$ generated by $\fld$.
Then $\Yman$ is the set of non-fixed points of $\flow$ and the induced flow on $\Yman$ satisfies conditions~\ref{cond:star:1} and~\ref{cond:star:flow_box}.
Another two conditions~\ref{cond:star:B_is_dense} and~\ref{cond:star:finite_comp} depend only on a choice of $\Yman$.
For instance they will hold if the complement $(\Xman\times\Circle)\setminus\Yman$ is finite.
\end{example}

\begin{theorem}\label{th:shift_func_XS1_L}
Suppose $\flow$ satisfies~\ref{cond:star:1}-\ref{cond:star:finite_comp}.
Let also $\dif:\Yman\to\Yman$ be a homeomorphism having the following properties.

\begin{enumerate}[leftmargin=*, label={\rm(\arabic*)}, itemsep=1ex]
\item\label{enum:shift_func_XS1_L:h_pres_levels}
$\dif(\Gx) \subset \px\times\Circle$ for each $\px\in\Xman$, so $\dif$ preserves the first coordinate, and in particular, leaves invariant each periodic orbit of $\flow$, thought it may interchange non-periodic orbits contained in $\px\times\Circle$.

\item\label{enum:shift_func_XS1_L:h_pres_periodic_orbits}
For each $\px\in\Bman$ the restriction $\dif: \px\times\Circle \to \px\times\Circle$ has degree $-1$ as a self-map of a circle.
\end{enumerate}
Then there exists a unique continuous function $\alpha:\Yman\to\bR$ such that
\begin{enumerate}[label={\rm(\alph*)}, itemsep=1ex]
\item\label{enum:shift_func_XS1_L:1}
$\dif^2(\px,\ps) = \flow(\px,\ps,\alpha(\px,\ps))$ for all $(\px,\ps)\in\Yman$, in particular $\dif^2$ preserves \myemph{every} orbit of $\flow$;

\item\label{enum:shift_func_XS1_L:2}
$\alpha(\px,\ps)=0$ iff $\dif(\px,\ps)=(\px,\ps)$;

\item\label{enum:shift_func_XS1_L:3}
if in addition $\Xman$ is a manifold of class $\Cr{r}$, $(0\leq r \leq \infty)$, $\flow$ is $\Cr{r}$ and admits $\Cr{r}$ flow box charts, and $\dif$ is $\Cr{r}$, then $\alpha$ is $\Cr{r}$ as well.
\end{enumerate}
\end{theorem}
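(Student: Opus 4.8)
The idea is to reduce the parametrized statement to the fibrewise circle analysis of Section~\ref{sect:circle_maps}, glued together by the covering machinery of Section~\ref{sect:shift}. After treating the connected components of $\Xman$ separately we may assume $\Xman$ is connected. Pass to the covering $\prj\colon\tYman\to\Yman$ obtained by restricting $\id_\Xman\times(\bR\to\Circle)\colon\Xman\times\bR\to\Xman\times\Circle$, so that its group of deck transformations is $\bZ$, generated by $\xi(\px,\pt)=(\px,\pt+1)$. Lift $\flow$ to a flow $\hflow$ on $\tYman$ via Lemma~\ref{lm:lift_of_shifts}\ref{enum:lm:lift_of_shifts:lift}. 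The crucial observation is that \emph{every} orbit of $\hflow$ is non-fixed and non-periodic: a periodic orbit $\px\times\Circle$ (for $\px\in\Bman$) lifts to the line $\px\times\bR$, and a non-closed arc-orbit lifts to countably many arcs; moreover $\hflow$ inherits flow-box charts from $\flow$ through the local homeomorphism $\prj$ (condition~\ref{cond:star:flow_box}). Thus $\tYman$, $\hflow$ fall into the setting of Corollaries~\ref{cor:shift_func_all_orb_nonclosed} and~\ref{lm:cov_shift_func}.

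The heart of the argument is to show that $\dif^2$ leaves invariant \emph{every} orbit of $\flow$. For a periodic orbit $\px\times\Circle$ ($\px\in\Bman$) this is immediate from hypothesis~\ref{enum:shift_func_XS1_L:h_pres_levels}. So fix $\px_0\in\Aman$; by~\ref{cond:star:finite_comp} the set $(\px_0\times\Circle)\cap\Yman$ is a disjoint union of finitely many open arcs $I_1,\dots,I_m$, each a non-closed orbit of $\flow$, and since $\dif$ is a homeomorphism of $\Yman$ preserving first coordinates we get $\dif\bigl((\px_0\times\Circle)\cap\Yman\bigr)=(\px_0\times\Circle)\cap\Yman$, so $\dif$ induces a permutation $\sigma$ of $\{I_1,\dots,I_m\}$. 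Choosing $\px_n\to\px_0$ with $\px_n\in\Bman$ (density of $\Bman$) and base points $\pz_j\in I_j$, continuity of $\dif$ shows that for $n$ large the orientation-reversing homeomorphism $\dif|_{\px_n\times\Circle}$ (degree $-1$ by~\ref{enum:shift_func_XS1_L:h_pres_periodic_orbits}) carries the cyclically ordered tuple of points near $\pz_1,\dots,\pz_m$ to a tuple near $\pz_{\sigma(1)},\dots,\pz_{\sigma(m)}$; as an orientation-reversing circle homeomorphism reverses cyclic order, $\sigma$ reverses the cyclic order of the arcs, whence $\sigma(j)\equiv c-j$ for a constant $c$, and in particular $\sigma^2=\id$. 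Therefore $\dif^2(I_j)=I_j$ for all $j$, and $\dif^2$ preserves every orbit of $\flow$.

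Next, using once more the degree $-1$ hypothesis on the dense family of fibres $\Bman\times\Circle$, one verifies the monodromy condition that lets $\dif$ be lifted to a homeomorphism $\hdif\colon\tYman\to\tYman$ with $\prj\circ\hdif=\dif\circ\prj$ and, as the parametrized form of Lemma~\ref{lm:lift_dif_circle}\ref{enum:char:deg:b} with $k=-1$, $\hdif\circ\xi=\xi^{-1}\circ\hdif$. Hence $\hdif^2$ commutes with every deck transformation, is independent of the choice of $\hdif$ (two choices differ by some $\xi^a$, and $(\xi^a\hdif)^2=\xi^a\hdif\xi^a\hdif=\hdif^2$, cf.\ Lemma~\ref{lm:lift_dif_circle}\ref{enum:lm:lift_dif_circle:square}), is a lift of $\dif^2$, and by the previous paragraph leaves invariant each orbit of $\hflow$. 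Corollary~\ref{cor:shift_func_all_orb_nonclosed} then gives a unique continuous (and $\Cr r$ under the hypotheses of~\ref{enum:shift_func_XS1_L:3}) function $\beta\colon\tYman\to\bR$ with $\hdif^2=\hflow_\beta$, and the argument of Corollary~\ref{lm:cov_shift_func} (feeding $\hdif^2\circ\xi=\xi\circ\hdif^2$ into the implications of Lemma~\ref{lm:lift_of_shifts}\ref{enum:lm:lift_of_shifts:implications}) shows $\beta$ is deck-invariant, so it descends to a unique continuous (resp.\ $\Cr r$) $\alpha\colon\Yman\to\bR$ with $\dif^2=\flow_\alpha$; this proves~\ref{enum:shift_func_XS1_L:1} and~\ref{enum:shift_func_XS1_L:3}. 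Claim~\ref{enum:shift_func_XS1_L:2} is then fibrewise: on each periodic fibre it is Corollary~\ref{cor:shift_func_circle}\ref{cr:shift_s1:deg_minus_1}\ref{enum:cor:shift_func_circle:2}, and on a non-closed arc $I_j$ it follows by inspecting the fixed-point correspondence of $\hdif^2$ on the lift of $I_j$ together with $\sigma^2=\id$.

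I expect the main obstacle to be Paragraph~2 together with the liftability check opening Paragraph~3: making rigorous the informal statement that a limit of orientation-reversing circle homeomorphisms reverses the cyclic order of the arcs $I_1,\dots,I_m$ (carefully handling the compact set $(\px_0\times\Circle)\setminus\Yman$ that sits between them), and confirming that degree $-1$ on the merely \emph{dense} set $\Bman$ is enough to lift $\dif$ with the twisted equivariance $\hdif\circ\xi=\xi^{-1}\circ\hdif$. Once orbit-invariance of $\dif^2$ and this lift are in hand, the remaining statements — continuity and smoothness of $\alpha$, and the zero-set description~\ref{enum:shift_func_XS1_L:2} — are bookkeeping on top of Corollaries~\ref{cor:shift_func_all_orb_nonclosed},~\ref{lm:cov_shift_func} and~\ref{cor:shift_func_circle}.
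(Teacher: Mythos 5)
Your proposal follows essentially the same route as the paper's proof: pass to the $\bZ$-covering $\prj\colon\tYman\to\Yman$, exploit density of $\Bman$ (condition~\ref{cond:star:B_is_dense}) to propagate the degree~$-1$ behavior from $\Bman\times\Circle$ to all of $\Yman$, and feed the result into the covering shift-function machinery of Corollaries~\ref{cor:shift_func_all_orb_nonclosed} and~\ref{lm:cov_shift_func}. The one real variation is \emph{where} you run the density-and-continuity argument: you argue \emph{downstairs}, showing that the permutation $\sigma$ of the finitely many arcs of $\Gx$ must reverse their cyclic order and hence be an involution, whereas the paper's Lemma~\ref{lm:prop_hdif}\ref{enum:shift_func_XS1_L:A:decr} argues \emph{upstairs}, proving that any lift $\hdif$ restricted to $\tGx$ is strictly decreasing. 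The upstairs version is slightly more economical, because a strictly decreasing bijection of the $\bZ$-indexed family $\{\tJi{k}\}$ with $\xi(\tJi{k})=\tJi{k+n}$ is forced to be $k\mapsto a-k$, which immediately gives $\hdif^2(\tJi{k})=\tJi{k}$ and therefore the \emph{upstairs} orbit-invariance required by Corollary~\ref{lm:cov_shift_func}. Your version reaches only the \emph{downstairs} conclusion $\dif^2(I_j)=I_j$ and then asserts ``by the previous paragraph $\hdif^2$ leaves invariant each orbit of $\hflow$''; that step is not automatic — a lift of an orbit-preserving map may shift a $\bZ$-orbit of arcs by a nonzero amount. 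You do have the missing ingredient (the twisted equivariance $\hdif\circ\xi=\xi^{-1}\circ\hdif$ established via density), and combining it with $\sigma(j)=c-j$ forces $\hdif(\tJi{k})=\tJi{a-k}$, but that chain should be spelled out rather than absorbed into ``the previous paragraph''. Apart from this bookkeeping lacuna (which is exactly the content of the paper's Sublemma~\ref{lm:prop_hdif}), your strategy, the role of each hypothesis, and the obstacles you flag (density, liftability) all coincide with the paper's.
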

\begin{proof}
Let $\prj\colon\Xman\times\bR\to\Xman\times\Circle$ be the infinite cyclic covering of $\Xman\times\Circle$ defined by $\prj(\px,\ps)=(\px, e^{2\pi i \ps})$ and $\xi(\px,\ps)=(\px, \ps+1)$ be a diffeomorphism of $\Xman\times\bR$ generating the group $\bZ$ of covering slices.
In particular, $\prj\circ\xi=\prj$.
Denote $\tYman= \prj^{-1}(\Yman)$
Then the restriction $\prj\colon\tYman\to\Yman$ is a covering of $\Yman$.

Let $\hflow:\tYman\times\bR\to\tYman$ be the lifting of the flow $\flow$, so
\[
  \prj\circ\hflow(\py,\pt) = \flow(\prj(\py),\pt), \qquad (\py,\pt)\in \tYman\times\bR.
\]
Then the orbits of $\hflow$ are exactly the connected components of $\tGx$ for all $\px\in\Xman$.
In particular, all orbits of $\hflow$ are non-closed.
Evidently, for $\px\in\Xman$ the following conditions are equivalent:
\begin{align*}
  \bullet&~\text{$\px\times\bR$ is an orbit of $\hflow$}; &
  \bullet&~\text{$\px\times\bR \subset \tYman$};  \\
  \bullet&~\text{$\px\times\Circle$ is an orbit of $\flow$}; &
  \bullet&~\text{$\px\times\Circle \subset \Yman$, i.e. $\px\in\Bman$}.
\end{align*}

\begin{sublemma}\label{lm:prop_hdif}
Let $\hdif:\tYman\to\tYman$ be any lifting of $\dif$, that is $\prj\circ\hdif = \dif\circ\prj$.
Then the following conditions hold.
\begin{enumerate}[leftmargin=*, label={\rm(\roman*)}, itemsep=0.8ex]
\item\label{enum:shift_func_XS1_L:A:inv_x_R}
$\hdif(\tGx) \subset \tGx$ for all $\px\in\Xman$;

\item\label{enum:shift_func_XS1_L:A:hh_square_is_the_same}
if $\hdif_1$ is another lifting of $\dif$, then $\hdif_1^2 = \hdif^2$;

\item\label{enum:shift_func_XS1_L:A:decr}
for each $\px\in\Xman$ the restriction $\hdif: \tGx \to \tGx$ is strictly decreasing in the sense that if $\dif(\px, \ps) =(\px, \pt)$ for some $\ps,\pt\in\bR$, then $\ps > \pt$;

\item\label{enum:shift_func_XS1_L:A:hh2_leaves_orbits_inv}
for each $\px\in\Xman$ the restriction $\hdif^2$ leaves invariant each orbit of $\hflow$.
\end{enumerate}
\end{sublemma}
\begin{proof}
\ref{enum:shift_func_XS1_L:A:inv_x_R}
Notice that $\prj\circ\hdif(\tGx)  =  \dif\circ\prj(\tGx)   =  \dif(\Gx)  \subset  \Gx$, whence
\[
  \hdif(\tGx)
    \, \subset \,
  \prj^{-1}\bigl( \dif(\Gx) \bigr)
    \, \subset \,
  \prj^{-1}(\Gx)
    \, = \,
  \tGx.
\]

\smallskip

\ref{enum:shift_func_XS1_L:A:hh_square_is_the_same}
Let $\px\in\Bman$, so $\px\times\Circle \subset \Yman$ is an orbit of $\flow$.
Then the restriction $\prj|_{\px\times\bR}:\px\times\bR \to \px\times\Circle$ is a universal covering map, $\dif|_{\px\times\Circle}:\px\times\Circle \to \px\times\Circle$ is a map of degree $-1$, and $\hdif|_{\px\times\bR}, \hdif_1|_{\px\times\bR}:\px\times\bR \to \px\times\bR$ are two lifting of $\dif|_{\px\times\Circle}$.
Hence by Lemma~\ref{lm:lift_dif_circle}\ref{enum:lm:lift_dif_circle:square} $\hdif^2|_{\px\times\bR} = \hdif_1^2|_{\px\times\bR}$.
Thus $\hdif^2 = \hdif_1^2$ on $\Bman\times\bR$.
But by property~\ref{cond:star:B_is_dense}, $\Bman$ is dense in $\Xman$, whence $\Bman\times\bR$ is dense in $\Xman\times\bR$.
Since $\Xman$ is Hausdorff, it follows that $\hdif^2 = \hdif_1^2$ on all of $\Xman\times\bR$.

\smallskip

\ref{enum:shift_func_XS1_L:A:decr}
If $\px\in\Bman$, then $\dif|_{\px\times\Circle}:\px\times\Circle \to \px\times\Circle$ is a map of degree $-1$, whence $\hdif|_{\px\times\bR}:\px\times\bR \to \px\times\bR$ reverses orientation of $\px\times\bR$, i.e. is strictly decreasing.

Suppose $\px\in \Xman\setminus\Bman$, i.e. $\px\times\bR\not\subset\tYman$.
We need to show that $\hdif|_{\tGx}:\tGx\to\tGx$ is strictly decreasing, that is if $(\px,\ps_0), (\px,\ps_1)\in\tGx$ are two distinct points with $\ps_0<\ps_1$, $(\px,\pt_0) = \hdif(\px,\ps_0)$, and $(\px,\pt_1) = \hdif(\px,\ps_1)$, then $\pt_0 > \pt_1$.

Since $\hdif$ is a homeomorphism, it follows that $\pt_0 \not= \pt_1$.
Suppose that $\pt_0 < \pt_1$.
Then there exist $a>0$, and two open neighborhoods $\Vman \subset \Uman$ of $\px$ in $\Xman$ such that
\begin{gather}
\label{equ:th_Vsi__U_pti}
\hdif\bigl( \Vman \times \ps_i \bigr) \ \subset \ \Uman \times(\pt_i - a; \pt_i + a), \quad  i=0,1, \\
\label{equ:t0pa_lt_t1ma}
\pt_0 + a \ < \ \pt_1 - a.
\end{gather}
Due to property~\ref{cond:star:B_is_dense}, the set $\Bman$ is dense in $\Xman$, so there exists a point $\py\in\Bman\cap\Vman \not=\varnothing$.

Then on the one hand $\py\times\bR \subset\tYman$ is an orbit of $\hflow$, and $\hdif:\py\times\bR \to \py\times\bR$ reverses orientation, so if $(\py, \pt'_i) = \hdif(\py,\ps_i)$, $i=0,1$, then $\pt'_0 > \pt'_1$.

On the other hand, due to~\eqref{equ:th_Vsi__U_pti} $\pt'_i \in (\pt_i - a; \pt_i + a)$, whence by~\eqref{equ:t0pa_lt_t1ma}:
\[
  \pt'_0 \ < \ \pt_0 + a \ < \ \pt_1 - a \ <  \ \pt'_1
\]
which gives a contradition.
Hence $\pt_0 > \pt_1$.

\smallskip

\ref{enum:shift_func_XS1_L:A:hh2_leaves_orbits_inv}
If $\px\in\Bman$, then $\tGx = \px\times\bR$ is an orbit of $\hflow$ and by~\ref{enum:shift_func_XS1_L:A:inv_x_R} it is invariant with respect to $\hdif$.
Hence it is also invariant with respect to $\hdif^2$.

Suppose $\px\in\Xman\setminus\Bman$.
Then by property~\ref{cond:star:finite_comp}, if $\Gx = (\px\times\Circle) \cap \Yman$ consists of finitely many connected components $\Ji{0},\ldots,\Ji{n-1}$ for some $n$ enumerated in the cyclical order along the circle $\px\times\Circle$.
This implies that $\tGx$ is a disjoint union of countably many open intervals $\Ji{i}$, $(i\in\bZ)$, being orbits of $\flow$ which can be enumerated so that $\xi(\tJi{k}) = \tJi{k+n}$ and $\prj(\tJi{k}) = \Ji{k\bmod n}$.

Since $\hdif$ is a strictly decreasing homeomorphism of $\tGx$, it follows that there exists $a\in\bZ$ such that $\hdif(\tJi{k}) = \tJi{a-k}$.
Hence
\[
  \hdif^2(\tJi{k}) = \hdif(\tJi{a-k}) = \tJi{a-(a-k)} = \tJi{k}.
  \qedhere
\]
\end{proof}

Now we can deduce our Theorem from Lemma~\ref{lm:prop_hdif}.

\ref{enum:shift_func_XS1_L:1}
By Corollary~\ref{lm:cov_shift_func} applied to $\dif^2$ there exists a unique continuous function $\alpha:\Yman\to\bR$ such that
\begin{align*}
  \hdif^2(\px,\ps)&=\hflow(\px,\ps,\alpha\circ\prj(\px,\ps)), &
  \dif^2(\px,\ps)&=\flow(\px,\ps,\alpha(\px,\ps)).
\end{align*}

\ref{enum:shift_func_XS1_L:2}
Let $(\px,\ps)\in\Xman\times\Circle$.
If $\px\in\Bman$, then $\px\times\Circle \subset \Yman$ is an orbit of $\flow$ and by Corollary~\ref{cor:shift_func_circle}\ref{enum:cor:shift_func_circle:2} $\alpha(\px,\ps)=0$ iff $\dif^2(\px,\ps) = (\px,\ps)$.

Suppose $\px\in\Xman\setminus\Bman$.
Then $(\px,\ps)$ is a non-periodic point of $\flow$, whence $\dif^2(\px,\ps) = \flow(\px,\ps,\alpha(\px,\ps)) = (\px,\ps)$ is possible if and only if $\alpha(\px,\ps)=0$.

\ref{enum:shift_func_XS1_L:3}
Smoothness properties of $\alpha$ follows similarly to the statement~\ref{enum:cor:shift_func_circle:3} of Corollary~\ref{lm:cov_shift_func}.
\end{proof}

\section{Polar coordinates}\label{sect:polar_coord}
Let $\tHsp = \bR\times[0;+\infty)$, $\oHsp = \bR\times(0;+\infty)$, and
\[
  \prj: \tHsp  \to \bC \equiv \bR^2, \qquad
  \prj(\rho, \phi) = \rho e^{2\pi i\phi} = (\rho\cos\phi, \rho\sin\phi),
\]
be the \myemph{infinite branched coveging map defining polar coordinates}.

\begin{lemma}\label{lm:lift}{\rm(\cite[Lemma~11.1]{Maksymenko:TA:2020})}
Let $\Uman\subset\bC$ be an open neighborhood of $0$, and $\dif:\Uman\to\bC$ a $\Cr{r}$, $1\leq r\leq\infty$, smooth embedding with $\dif(0)=0$.
Then there exists a $\Cr{r-1}$ embedding $\hdif:\prj^{-1}(\Uman) \to \tHsp$ such that $\prj\circ\hdif=\dif\circ\prj$.

Suppose, in addition, that the Jacobi matrix $J(\dif,0)$ of $\dif$ at $0$ is orthogonal.
Then there exists $a\in\bR$ such that for each $\ps\in\bR$
\[
\hdif(0,\ps) =
\begin{cases}
  (0,a+\ps) \ \text{if} \ J(\dif,0)\in\SO(2), \\
  (0,a-\ps) \ \text{if} \ J(\dif,0)\in\SO^{-}(2).
\end{cases}
\]
Hence in the second case (when $\dif$ reverses orientation) $\hdif^2(0,\ps) = (0,\ps)$, that is $\hdif$ is always fixed on $0\times\bR$.
\end{lemma}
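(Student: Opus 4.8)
The plan is to lift not $\dif$ itself but a ``radial normalisation'' of $\dif\circ\prj$, which — unlike $\dif\circ\prj$ — extends across the branch locus $0\times\bR$. Identifying $\bR^2$ with $\bC$, I first define $\mu\colon\prj^{-1}(\Uman)\to\bR^2$ by $\mu(\rho,\phi)=\dif(\rho e^{2\pi i\phi})/\rho$ for $\rho>0$ and $\mu(0,\phi)=J(\dif,0)e^{2\pi i\phi}=\partial_\rho(\dif\circ\prj)(0,\phi)$. Since $\dif\circ\prj$ is $\Cr{r}$ ($\prj$ being smooth) and vanishes on $0\times\bR$, Hadamard's lemma gives $\dif\circ\prj(\rho,\phi)=\rho\int_0^1\partial_\rho(\dif\circ\prj)(t\rho,\phi)\,dt$, whence $\mu$ is of class $\Cr{r-1}$ and $\rho\,\mu(\rho,\phi)=\dif\circ\prj(\rho,\phi)$ throughout; moreover $\mu$ is $1$-periodic in $\phi$ and \emph{nowhere zero} — for $\rho>0$ because $\dif$ is injective with $\dif(0)=0$, and for $\rho=0$ because $J(\dif,0)$ is invertible.

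Next I would lift $\mu$ through the covering $\prj\colon\oHsp\to\bR^2\setminus 0$, obtaining a $\Cr{r-1}$ map $(R,\Theta)\colon\prj^{-1}(\Uman)\to\oHsp$ with $Re^{2\pi i\Theta}=\mu$, normalised by fixing its value at one point of $0\times\bR$. (The lifting criterion is the one point that needs care: it holds immediately once $\Uman$ is a disc, so that $\prj^{-1}(\Uman)$ is simply connected, and in general because an embedding fixing $0$ preserves winding numbers of loops around $0$ up to the global sign $\varepsilon=\operatorname{sign}\det J(\dif,0)$, so $(\dif\circ\prj)_*$ kills every loop killed by $\prj_*$ on $\prj^{-1}(\Uman)$.) Then I set $\hdif(\rho,\phi):=\bigl(\rho R(\rho,\phi),\,\Theta(\rho,\phi)\bigr)$. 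This takes values in $\tHsp$ because $\rho R\ge 0$, with $\rho R=0$ exactly when $\rho=0$, so $\hdif(0\times\bR)\subset 0\times\bR$; it is $\Cr{r-1}$; and it lifts $\dif$, since $\prj\circ\hdif(\rho,\phi)=\rho R\,e^{2\pi i\Theta}=\rho\,\mu(\rho,\phi)=\dif\circ\prj(\rho,\phi)$.

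To see that $\hdif$ is an embedding: on $\oHsp$ the map $\prj$ is a local diffeomorphism, so $\prj\circ\hdif=\dif\circ\prj$ forces $D\hdif$ to be conjugate to $D\dif$, hence invertible; and on $0\times\bR$ one computes $\det D\hdif(0,\phi)=R(0,\phi)\,\partial_\phi\Theta(0,\phi)$, which is nonzero because $R(0,\phi)=|J(\dif,0)e^{2\pi i\phi}|>0$ and because $\phi\mapsto J(\dif,0)e^{2\pi i\phi}$ traverses a centred ellipse with angular velocity of constant sign $\varepsilon$. Thus $\hdif$ is an immersion carrying boundary into boundary. For injectivity, if $\hdif(\rho_1,\phi_1)=\hdif(\rho_2,\phi_2)$ then applying $\prj$ and using injectivity of $\dif$ gives $\prj(\rho_1,\phi_1)=\prj(\rho_2,\phi_2)$, so $\rho_1=\rho_2=:\rho$ and $\phi_2=\phi_1+n$ with $n\in\bZ$; from $1$-periodicity of $\mu$ one gets $\Theta(\rho,\phi+1)=\Theta(\rho,\phi)+\varepsilon$ (the integer difference being continuous, hence constant, and equal to the winding number of $\mu(0,\cdot)$), so the equality $\Theta(\rho,\phi_1)=\Theta(\rho,\phi_1+n)$ forces $n=0$. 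An injective immersion of $2$-manifolds with boundary that sends boundary into boundary is an embedding, which completes the first assertion.

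Finally, suppose $J:=J(\dif,0)$ is orthogonal. On $0\times\bR$ we have $\hdif(0,\phi)=(0,\Theta(0,\phi))$ with $R(0,\phi)e^{2\pi i\Theta(0,\phi)}=Je^{2\pi i\phi}$. If $J\in\SO(2)$ it is the rotation by some angle $2\pi b$, so $Je^{2\pi i\phi}=e^{2\pi i(\phi+b)}$ and continuity forces $\Theta(0,\phi)=\phi+a$ for the constant $a$ prescribed by the normalisation; thus $\hdif(0,\phi)=(0,a+\phi)$. If $J\in\SO^{-}(2)$ it is a reflection, $Je^{2\pi i\phi}=e^{2\pi i(b-\phi)}$ for some $b$, so $\Theta(0,\phi)=a-\phi$, giving $\hdif(0,\phi)=(0,a-\phi)$ and hence $\hdif^2(0,\phi)=(0,a-(a-\phi))=(0,\phi)$, i.e.\ $\hdif$ is fixed on $0\times\bR$. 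As indicated, the only genuinely delicate step is the lifting criterion for $\mu$ (equivalently, the topology of $\prj^{-1}(\Uman)$); the rest is forced by the relation $\prj\circ\hdif=\dif\circ\prj$ together with a one-line Jacobian computation at the origin.
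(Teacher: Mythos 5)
The paper gives no proof of this lemma: it is imported by citation from \cite[Lemma~11.1]{Maksymenko:TA:2020}, so there is no proof of record in the text to compare against, and your argument has to be judged on its own terms. Your strategy --- dividing out $\rho$ via Hadamard's lemma to obtain a nowhere-vanishing $\Cr{r-1}$ map $\mu$, lifting $\mu$ through the universal covering $\prj|_{\oHsp}$ of $\bR^2\setminus 0$ to get $(R,\Theta)$, and setting $\hdif=(\rho R,\Theta)$ --- is clean, and the computation of $\hdif$ on the axis $0\times\bR$ in the orthogonal case is correct. When $\Uman$ is a disc (so that $\prj^{-1}(\Uman)$ is simply connected) your proof is essentially complete.

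The genuine gap is exactly the step you flag as delicate. Your ``in general'' claim --- that an embedding $\dif$ fixing $0$ sends every loop of winding number $0$ about the origin in $\Uman\setminus 0$ to a loop of winding number $0$ --- is false when $\Uman$ is not simply connected. Take $\Uman=\bC\setminus\{2\}$ and $\dif(z)=\tfrac{1}{z-2}+\tfrac{1}{2}$: this is a $\Cinfty$ (holomorphic) embedding of $\Uman$ into $\bC$ with $\dif(0)=0$ and $\det J(\dif,0)=\tfrac{1}{16}>0$, so $\varepsilon=+1$. The circle $C=\{2+e^{i\theta}\}\subset\Uman\setminus 0$ has winding number $0$ about the origin, yet $\dif(C)=\{e^{-i\theta}+\tfrac{1}{2}\}$ is the circle of radius $1$ centred at $\tfrac{1}{2}$, which encircles $0$ with winding number $-1$. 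Any loop in $\prj^{-1}(\Uman)\cap\oHsp$ covering $C$ is therefore mapped by $(\dif\circ\prj)_*$ to a nontrivial element of $\pi_1(\bR^2\setminus 0)$, so $\mu$ (hence $\hdif$) does not lift; the statement as written thus actually fails for this $\Uman$. What is true, and is all the application in Lemma~\ref{lm:ext_alpha} needs, is that $\dif$ multiplies by $\varepsilon$ the winding number of loops that are homotopic \emph{in $\Uman\setminus 0$} to iterates of a small loop about $0$. So the hypothesis that $\Uman$ be a disc (or at least that $\prj^{-1}(\Uman)$ be simply connected) is essential, and the unqualified ``in general'' argument should be dropped; the rest of your proof then stands.
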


Let $\func:\bR^2\to\bR$ be a homogeneous polynomial without multiple factors and having degree $k \geq 2$ and $\fld = -\ddx{\func}{\py} \ddx{}{\px} + \ddx{\func}{\px} \ddx{}{\py}$ be the Hamiltonial vector field of $\func$.
Since the restriction $\prj:\Int(\mathbb{H})\to\bR^2\setminus 0$ is a infinite cyclic covering map, $\fld$ induces a vector field $\hfld$ on $\oHsp$.
One can even obtain precise formulas for $\hfld$ (see~\cite[\S4.2, Corolary~4.4]{Maksymenko:hamv2}):
\[
  \hfld(r,\phi) =
  \bigl(\ddx{}{ \rho}, \ddx{}{\phi}  \bigr)
  \begin{pmatrix}
  \cos\phi & \sin\phi \\
  -\tfrac{1}{\rho} \sin\phi & \tfrac{1}{\rho} \cos\phi
  \end{pmatrix}
  \begin{pmatrix}
    -\func'_{y}(\pz) \\
     \func'_{x}(\pz)
  \end{pmatrix},
\]
where $\pz = \prj(\rho,\phi) = \rho e^{2\pi i}$.
The latter formula can be reduced to a more simplified form.
Define the following function $q:\bC\setminus0 \to \bC$ by
\[
q(\pz) =
\frac{ - \func'_y(\pz) + i \func_x(\pz) }{\pz} =
\frac{ (- \func'_y(\pz) + i \func_x(\pz))\bar{\pz}}{|\pz|^2}.
\]
Then
\[
  \hfld(r,\phi) = \mathrm{Re}(q(\pz)) \ddx{}{\rho} + \mathrm{Im}(q(\pz)) \tfrac{1}{\rho} \ddx{}{\phi}.
\]
Since $\func$ is a homogeneous of degree $k\geq2$, it follows that $\hfld(r,\phi)$ smoothly extends to $\tHsp$.

Let $\flow$ and $\hflow$ be the local flows generated by $\fld$ and $\hfld$ respectively.
Then $\flow_t\circ\prj(\rho,\phi) = \prj\circ\hflow_t(\rho,\phi)$ whenever all parts of that identity are defined.
\begin{example}
1) Suppose $0$ is a non-degenerate local extreme of $\func$.
Then one can assume that $\func(\px,\py) = \tfrac{1}{2}(\px^2+\py^2)$.
In this case
\begin{align*}
  \fld(\px,\py) &= -\py\ddx{}{\px} + \px \ddx{}{\py}, &
  \flow(\pz, \pt) &= \pz e^{2\pi i \pt}, \\
  \hfld(\rho,\phi) &= \ddx{}{\phi}, &
  \hflow(\rho,\phi, \pt) &= (\rho, \phi+\pt).
\end{align*}

2) Let $0$ be a non-degenerate saddle, so one can assume that $\func(\px,\py) = \px\py$.
Then
\begin{gather*}
  \fld(\px,\py) = -\px\ddx{}{\px} + \py \ddx{}{\py}, \qquad
  \flow(\px,\py, \pt) = \bigl(\px e^{-\pt}, \py e^{\pt}\bigr), \\
  \hfld(\rho,\phi) = \rho\cos2\phi \ddx{}{\rho} + \sin2\phi \ddx{}{\phi}.
\end{gather*}
Notice that writing down precise formulas for $\hflow$ is a rather complicated task.

3) If $0$ is a degenerate critical point of $\func$, so $\deg\func \geq3$, then the situation is more complicated.
Notice that in this case $\hfld$ is zero on $\partial\tHsp = \bR\times 0$, whence $\hflow$ is fixed on that line.
Again the formulas for $\flow$ and $\hflow$ are highly complicated.
\end{example}

\begin{lemma}\label{lm:ext_alpha}{\rm(see~\cite[Theorem~1.6]{Maksymenko:hamv2}, \cite[Proof of Theorem~5.6]{Maksymenko:TA:2020})}
Let $\Uman\subset\bR^2$ be an open neighborhood of the origin $0\in\bR^2$, $\dif:\Uman\to\bR^2$ an embedding which preserves orbits of $\flow$, and $\alpha:\Uman\setminus 0\to\bR$ be a $\Cinfty$ function such that $\dif(\px) = \flow(\px, \alpha(\px))$ for all $\px\in\Uman\setminus 0$.
Let also $\hdif:\prj^{-1}(\Uman) \to \tHsp$ be any lifting of $\dif$ as in Lemma~\ref{lm:lift}.
Then $\alpha$ can be defined at $\px$ so that it becomes $\Cinfty$ in $\Uman$ in the following cases:
\begin{enumerate}[leftmargin=*, label={\rm(\alph*)}]
\item\label{emum:lm:ext_alpha:1} $\px$ is a non-degenerate local extreme of $\func$ or a (possibly degenerate) saddle point;
\item\label{emum:lm:ext_alpha:2} $\px$ is a degenerate local extreme of $\func$ and $\hdif$ is fixed on $\bR\times0$.
\item\label{emum:lm:ext_alpha:3}
$\px$ is a degenerate local extreme of $\func$ and there exists an open neighborhood $\Vman\subset\Uman$ and another embedding $\qdif:\Uman\to\bR^2$ such that $\qdif(\Vman) \subset \Uman$, $\qdif$ preserves orbits of $\flow$ and \myemph{reverses} their orientation, and $\dif = \qdif^2$.
\end{enumerate}
\end{lemma}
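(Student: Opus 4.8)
The plan is to transport the question through the polar-coordinate branched covering $\prj\colon\tHsp\to\bC$ of Lemma~\ref{lm:lift} and to reduce the smooth extension of $\alpha$ across $0$ to a smooth extension, across the exceptional circle $\bR\times 0$, of a shift function of the blown-up flow $\hflow$. First I would fix a lift $\hdif\colon\prj^{-1}(\Uman)\to\tHsp$ of $\dif$ as in Lemma~\ref{lm:lift}. On the interior $\oHsp$ the field $\hfld$ has no zeros (since $0$ is the only critical point of $\func$), so there $\hflow$ is an honest flow admitting flow box charts; composing $\hdif$ with a power of the deck generator $\xi(\rho,\phi)=(\rho,\phi+1)$ if necessary, we may assume that on $\prj^{-1}(\Uman)\cap\oHsp$ it coincides with the lift $\hflow_{\alpha\circ\prj}$ of $\flow_{\alpha}=\dif$ supplied by Lemma~\ref{lm:lift_of_shifts}\ref{enum:lm:lift_of_shifts:shift}. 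Thus $\hdif=\hflow_{\beta}$ on $\prj^{-1}(\Uman)\cap\oHsp$, where $\beta:=\alpha\circ\prj$ is visibly $\xi$-invariant. So the whole problem reduces to extending $\beta$ to a $\Cinfty$, $\xi$-invariant function on all of $\prj^{-1}(\Uman)$ that is constant along $\bR\times 0$; such a $\beta$ descends through $\prj$ to a $\Cinfty$ function on $\Uman$ agreeing with $\alpha$ off $0$, i.e.\ to the desired extension.

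Cases \ref{emum:lm:ext_alpha:1} and \ref{emum:lm:ext_alpha:2} are then exactly the analysis of Hamiltonian-like flows of homogeneous polynomials near $\bR\times 0$ performed in~\cite[Theorem~1.6]{Maksymenko:hamv2} (see also~\cite[proof of Theorem~5.6]{Maksymenko:TA:2020}), which is precisely the statement that $\beta$ has the required smooth $\xi$-invariant extension. When $0$ is a non-degenerate extreme one may moreover normalize $\func=\tfrac12(x^2+y^2)$, so that $\hfld=\partial/\partial\phi$ and $\hdif(\rho,\phi)=(\rho,g(\rho,\phi))$ with $g$ a $\Cinfty$ angular component; then $\beta=g-\phi$ extends smoothly, and since $J(\dif,0)\in\SO(2)$ forces $g(0,\phi)=\phi+a$ by Lemma~\ref{lm:lift}, $\beta$ is constant along $\bR\times 0$. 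The saddle case and the degenerate-extreme case are genuinely delicate --- there is no explicit normal form for $\hflow$, whose field $\hfld=\rho^{k-2}\RE(q(e^{2\pi i\phi}))\tfrac{\partial}{\partial\rho}+\rho^{k-3}\IM(q(e^{2\pi i\phi}))\tfrac{\partial}{\partial\phi}$ has $\bR\times 0$ as part of (saddle) or all of (degenerate extreme, where $k=\deg\func\ge 4$) its singular set, and one must control the orders of flatness of several remainder terms along $\bR\times 0$; in case \ref{emum:lm:ext_alpha:2} the hypothesis that $\hdif$ be fixed on $\bR\times 0$ is exactly what makes this work, allowing one to divide out the appropriate power of $\rho$ transversally to the fixed line without losing smoothness. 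I expect this degenerate analysis to be the real technical core; it is, however, imported wholesale, so the step requiring fresh care here is the reduction of case \ref{emum:lm:ext_alpha:3} to case \ref{emum:lm:ext_alpha:2}.

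For that reduction: since $\qdif$ preserves every orbit of $\flow$, near $0$ it carries each small level circle of $\func$ onto itself, so $\func\circ\qdif=\func$ near $0$; comparing degree-$k$ homogeneous parts gives $\func\circ J=\func$ for $J:=J(\qdif,0)$, so $J$ belongs to the linear automorphism group of $\func$. For a homogeneous polynomial without multiple factors having a degenerate extreme at $0$ this group is finite --- it permutes the (at least two) pairwise non-proportional definite quadratic factors of $\func$, with kernel contained in the intersection of the orthogonal groups of two of them --- hence $J$ has finite order; combined with $\det J<0$ (because $\qdif$ reverses the orientation of the orbits, hence of the $2$-disks they bound, hence of a neighbourhood of $0$) this forces the eigenvalues of $J$ to be $1$ and $-1$, whence $J^2=\id$. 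Now take any lift $\cov{\qdif}$ of $\qdif$ (Lemma~\ref{lm:lift}); its restriction to $\bR\times 0=\prj^{-1}(0)$ is a lift of the circle map induced by the linear reflection $J$ on directions, and this induced map is again an involution, as is immediate from $J^2=\id$ on $\bR^2$ together with the fact that a degree $-1$ circle map has a fixed point; hence $\cov{\qdif}^{2}$ restricts to the identity on $\bR\times 0$. Since $\cov{\qdif}^{2}$ is a lift of $\qdif^2=\dif$, the hypothesis of case \ref{emum:lm:ext_alpha:2} holds for $\dif$, which completes the proof.
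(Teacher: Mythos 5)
Your proposal follows the same overall strategy as the paper: for cases \ref{emum:lm:ext_alpha:1} and \ref{emum:lm:ext_alpha:2} defer to the cited references, and deduce \ref{emum:lm:ext_alpha:3} from \ref{emum:lm:ext_alpha:2} by showing that $\widetilde{\qdif}^{\,2}$ is a lift of $\dif$ that fixes $\bR\times 0$. The difference is in how that last fact is justified. The paper simply invokes Lemma~\ref{lm:lift}, whose conclusion $\hdif^2(0,\ps)=(0,\ps)$ is stated under the hypothesis that $J(\qdif,0)$ is orthogonal; neither the lemma's hypotheses nor those of Lemma~\ref{lm:ext_alpha} make this evident (a general orientation‑reversing $J$ with $J^2\neq\lambda\mathrm{id}$ would give a degree $-1$ circle map whose lifted square is not the identity). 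You instead supply exactly the missing ingredient: from $\func\circ\qdif=\func$ near $0$ you deduce $\func\circ J=\func$ for $J=J(\qdif,0)$; since $\func=Q_1\cdots Q_q$ with $q\ge 2$ pairwise non‑proportional definite factors, the group $\{A:\func\circ A=\func\}$ is finite (it permutes the factors, and the kernel lies in $O(Q_1)\cap O(Q_2)$, a finite group); so $J$ has finite order, and combined with $\det J<0$ this forces eigenvalues $\{1,-1\}$, hence $J^2=\mathrm{id}$. Then $\widetilde{\qdif}|_{\bR\times 0}$ is a lift of the involutive degree $-1$ circle map $g_J$, and by Lemma~\ref{lm:lift_dif_circle}\ref{enum:lm:lift_dif_circle:square} (together with the existence of a fixed point of $g_J$) its square is the identity on $\bR\times 0$. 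This is a correct, self‑contained replacement for the paper's one‑line citation and is arguably a welcome elaboration: the weaker conclusion $J^2=\mathrm{id}$, rather than orthogonality, is what the argument actually uses.

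One small caveat on your opening paragraph: the claim that ``such a $\beta$ descends through $\prj$ to a $\Cinfty$ function on $\Uman$'' is not literally true for an arbitrary smooth, $\xi$-invariant function on $\tHsp$ constant on $\bR\times 0$ (e.g.\ $\beta(\rho,\phi)=\rho$ descends only to $|\pz|$, which is not smooth at $0$); the branched covering $\prj$ imposes additional flatness constraints along $\bR\times 0$. Since you ultimately defer cases \ref{emum:lm:ext_alpha:1}--\ref{emum:lm:ext_alpha:2} to~\cite[Theorem~1.6]{Maksymenko:hamv2} and~\cite[Theorem~5.6]{Maksymenko:TA:2020} exactly as the paper does, this imprecision in the framing does not affect the validity of the argument, but the ``reduction'' you describe is a heuristic rather than a rigorous equivalence.
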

\begin{proof}
Cases~\ref{emum:lm:ext_alpha:1} and~\ref{emum:lm:ext_alpha:2} are proved in~\cite[Theorem~1.6]{Maksymenko:hamv2}, see also proof of Theorem~5.6 in~\cite{Maksymenko:TA:2020}.

\ref{emum:lm:ext_alpha:3}$\Rightarrow$\ref{emum:lm:ext_alpha:2}
Let $\widetilde{\qdif}:\prj^{-1}(\Uman) \to \tHsp$ be any lifting of $\qdif$ as in Lemma~\ref{lm:lift}.
Then $\widetilde{\qdif}^2$ is a lifting of $\qdif^2 = \dif$.
Moreover, since $\widetilde{\qdif}$ reverse orientation of orbits, we get from Lemma~\ref{lm:lift} that $\widetilde{\qdif}^2$ is fixed on $\bR\times0$.
Hence condition~\ref{emum:lm:ext_alpha:2} holds.
\end{proof}

\section{Chipped cylinders of a map $\func\in\FMP$}\label{sect:chip_cyl}
Let $\func\in\FMP$.
In what follows we will use the following notations.
\begin{enumerate}[wide, label={\rm(\roman*)}, itemsep=1ex]
\item
$\Kman_1,\ldots,\Kman_{\vk}$ denote all the critical leaves of $\func$, and
\[ \KK = \mathop{\cup}\limits_{i=1}^{\vk}\Kman_i.\]

\item
Let $\regN{\Kman_i}$, $i=1,\ldots,\vk$, be an $\func$ regular neighborhood of $\Kman_i$ choosen so that $\regN{\Kman_i} \cap \regN{\Kman_j}=\varnothing$ for $i\not=j$.

\item
Let also $\Lman_1,\ldots,\Lman_{\vl}$ be all the connected components of $\Mman\setminus \KK$;

\item
For each $i=1,\ldots,\vl$ let
\[ \chp_i = \overline{\Lman_i} \setminus\fSing. \]
Then there exist a finite subset $\Qman_i \subset \{-1,1\} \times \Circle$, and an immersion $\phi_i:\bigl([-1,1]\times\Circle\bigr) \setminus \Qman_i \to \chp_i$ and a $\Cinfty$ embedding $\eta:[0,1]\to\Pman$ such that the following diagram is commutative:
\[
\xymatrix{
  \bigl([0;1]\times\Circle\bigr) \setminus \Qman_i \ar[rr]^-{\phi} \ar[d]_{\prj_1} && \chp_i \ar[d]^-{\func}   \\
   [0;1] \ar[rr]^-{\eta} && \Pman
}
\]
where $\prj_1$ is the projection to the first coordinate.
Notice that $\phi$ can be non-injective only at points of $\{-1,1\}\times\Circle$ and this can happens only when $\Pman=\Circle$, see Example~\ref{exmp:chipped_cyl} and Figure~\ref{fig:chp:examples}d) below.

We will call $\chp_i$ a \myemph{chipped} cylinder of $\func$, see Figure~\ref{fig:chipped_cyl}.
\begin{figure}[ht]
\centering
\includegraphics[width=0.8\textwidth]{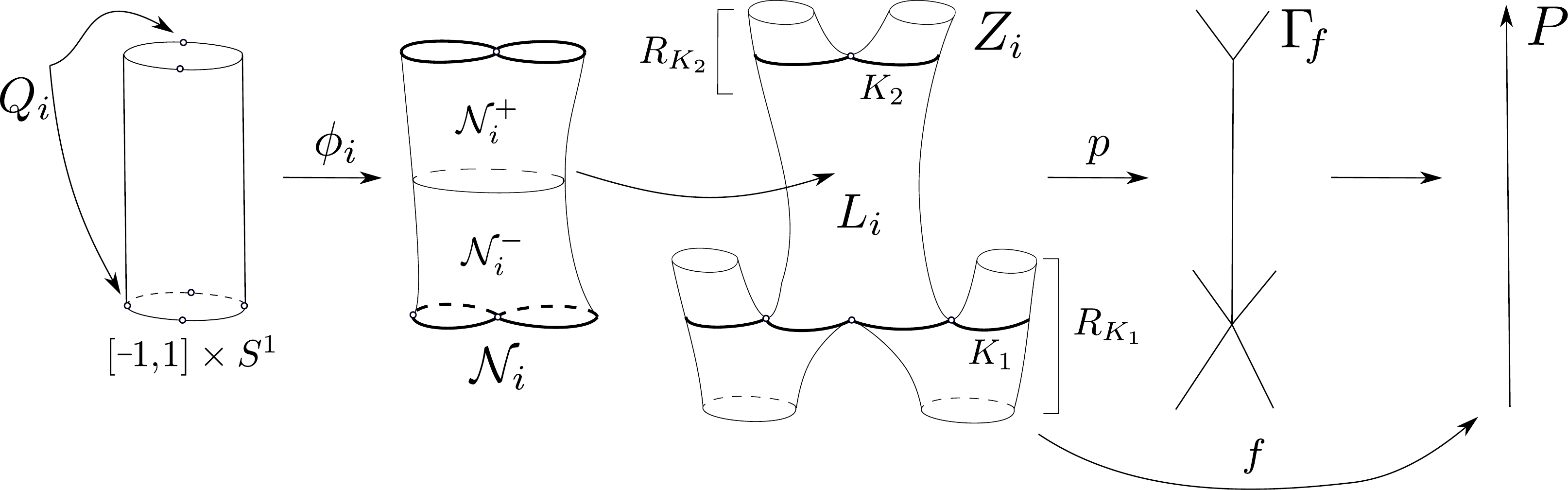}
\caption{}\label{fig:chipped_cyl}
\end{figure}

It will also be convenient to denote
\begin{gather*}
  \chp_i^{-} = \phi_i\bigl([-1;0]\times\Circle\bigr) \setminus \Qman_i\bigr),  \qquad
  \chp_i^{+} = \phi_i\bigl([0;1]\times\Circle\bigr) \setminus \Qman_i\bigr), \\
  \Int{\chp_i} = \phi_i\bigl((-1;1)\times\Circle\bigr).
\end{gather*}
We will call $\chp_i^{-}$ and $\chp_i^{+}$ \myemph{chipped half-cylinders} of $\chp_i$ and $\func$, and $\Int{\chp_i}$ the \myemph{interior} of $\chp_i$.

\item
Let also
\[
\Zman_i \, = \, \chp_i \, \bigcup \, \Bigl( \mathop{\cup}\limits_{\overline{\chp_i} \cap \Kman_j \not=\varnothing} \regN{\Kman_j} \Bigr)
\]
be the union of the chipped cylinder $\chp_i$ with $\func$-regular neighborhoods of critical leaves of $\func$ which intersect the closure $\overline{\chp_i}$.
We will call $\Zman_i$ an \myemph{$\func$-regular neighborhood} of $\chp_i$.
\end{enumerate}

\begin{example}\label{exmp:chipped_cyl}
a) Let $\func:[0,1]\times\Circle\to P$ be a map of class 
$\mathcal{F}([0,1]\times\Circle,P)$
having no critical points, see Figure~\ref{fig:chp:examples}a).
Then it has a unique chipped cylinder $\chp = [0,1]\times\Circle$, which coincides with its $\func$-regular neighborhood and $\KK=\varnothing$.

b) Let $\func:D^2\to P$ be a map of class $\mathcal{F}(D^2,P)$ having only one critical point $\pz$, see Figure~\ref{fig:chp:examples}b).
Then $\KK = \{\pz\}$, $\func$ has a unique chipped cylinder $\chp = D^2\setminus\{\pz\}$, and its $\func$-regular neighborhood is all $D^2$.

c) Let $\func:S^2\to P$ be a map of class $\mathcal{F}(S^2,P)$ having only two critical points $\pz_1$ and $\pz_2$ being therefore extremes of $\func$, see Figure~\ref{fig:chp:examples}c).
Then $\KK = \{\pz_1, \pz_2\}$, $\func$ has a unique chipped cylinder $\chp = S^2\setminus\{\pz_1,\pz_2\}$ and its $\func$-regular neighborhood is all $S^2$.

d) Let $\Mman$ be either a $2$-torus or Klein bottle with a hole and $\func:\Mman\to\Circle$ be a map of class $\mathcal{F}(M,\Circle)$ schematically shown in Figure~\ref{fig:chp:examples}d).
It has only one critical point $\pz$ and that point is a saddle, a unique critical leaf $\Kman = \KK$, and two chipped cylinders $\chp_1$ and $\chp_2$.
It follows from the Figure~\ref{fig:chp:examples}d) that $\overline{\chp_1}$ intersects only one $\Kman$ from ``both sides'', in the sense that both intersections $\overline{\chp_1^{-}}\cap\Kman$ and  $\overline{\chp_1^{+}}\cap\Kman$ are non-empty.

e) Let $\func:[0,1]\times\Circle\to\bR$ be a Morse function having one minimum $\pz$ and one saddle point $\py$ as in Figure~\ref{fig:chp:examples:all}.
Then $\func$ has two critical leaves: the point $\pz$ and a critical leaf $\Kman$ containing $\py$, and three chipped cylinders $\chp_1$, $\chp_2$, $\chp_3$.
Let $\regN{\Kman}$ be an $\func$-regular neighborhood of $\Kman$.
Then the corresponding $\func$-regular neighborhoods of chipped cylinders are the following ones:
\begin{align*}
  \Zman_1 &= \chp_1 \cup \{\pz\}\cup \regN{\Kman}, &
  \Zman_2 &= \chp_2 \cup \regN{\Kman}, &
  \Zman_3 &= \chp_3 \cup \regN{\Kman}.
\end{align*}
\end{example}

\begin{figure}[htbp!]
\begin{tabular}{ccccccc}
\includegraphics[height=1.4cm]{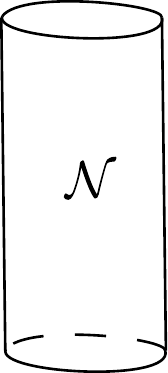} &&
\includegraphics[height=1.4cm]{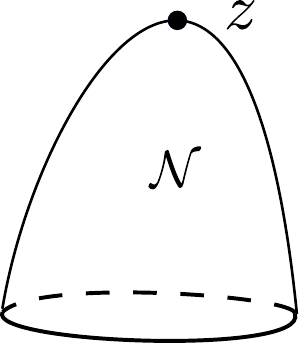} &&
\includegraphics[height=1.4cm]{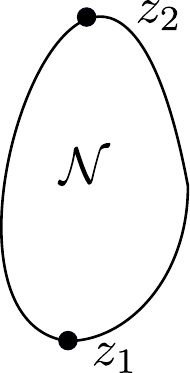} &&
\includegraphics[height=1.4cm]{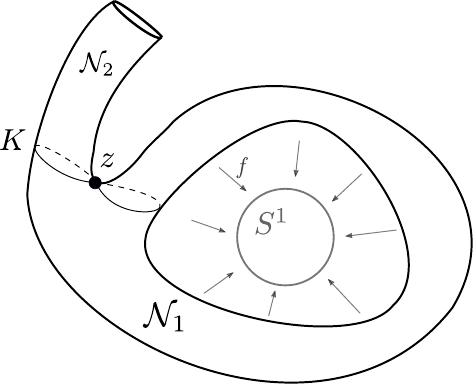} \\
a)  & \qquad &  b) &\qquad &  c) &\qquad &  d)
\end{tabular}
\caption{}\label{fig:chp:examples}
\end{figure}

\begin{figure}[htbp!]
\includegraphics[height=1.8cm]{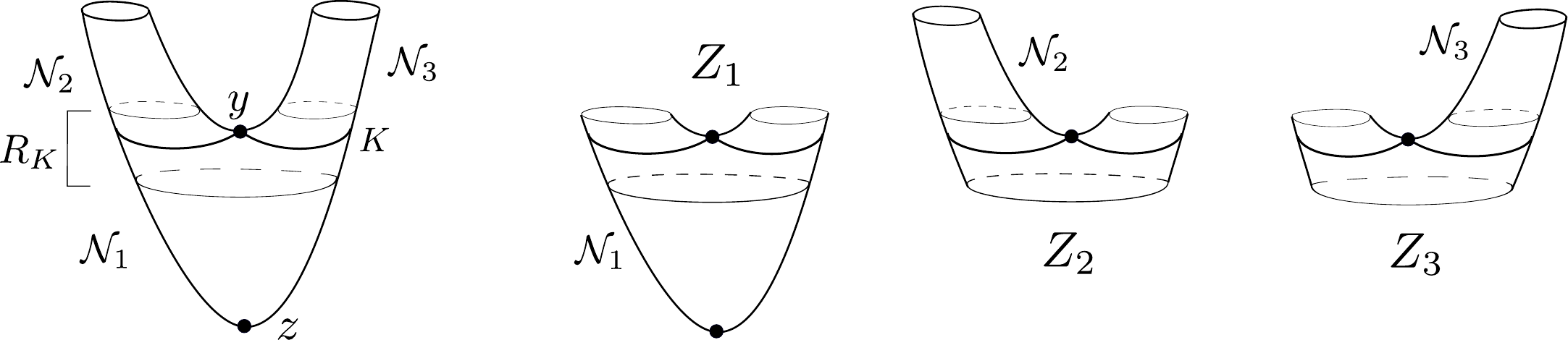}
\caption{}\label{fig:chp:examples:all}
\end{figure}

The following lemma describes simple properties of chipped cylinders.
The proof is left for the reader.
\begin{lemma}\label{lm:chp_cyl_prop}
Let $\chp$ be a chipped cylinder of $\func$, $\chp^{-}$ and $\chp^{+}$ be its chipped half-cylinders, $\Zman$ be $\func$-regular neighborhood of $\chp$, and $\prj:\Mman\to\KRGraphf$ be the projection onto the graph of $\func$.
Then the following statements hold.
\begin{enumerate}[leftmargin=*, label={\rm(\arabic*)}, itemsep=1ex]
\item
$\chp^{-}$ and $\chp^{+}$ are orientable manifolds.
Moreover, if $\Pman=\bR$, then $\chp$ is orientable as well.
However, if $\Pman=\Circle$, then it is possible to construct an example of $\func$ having non-orientable chipped cylinder, see Example~\ref{exmp:chipped_cyl}d).

\item
Each of the closures $\overline{\chp^{-}}$ and $\overline{\chp^{+}}$ intersects at most one critical leaves of $\func$, and those intersections consist of open arcs being leaves of the singular foliation $\SingFol$.

\item\label{enum:lm:chp_cyl_prop:3}
If $\chp \cap \Kman_i = \varnothing$ but $\overline{\chp} \cap \Kman_i \not= \varnothing$, then $\Kman_i$ is a critical point of $\func$ being a local extreme.

\item \label{saddle_p}
If $M$ is connected and $\overline{\chp}$ does not contain any saddle point of $f$, then $M=\overline{\chp}=\Zman$ and $f$ is one of the maps from a{\rm)}-c{\rm)} of Example \ref{exmp:chipped_cyl}.
\item
$\chp' \cap \chp'' \subset \KK$ for any two distinct chipped cylinders $\chp'$ and $\chp''$ of $\func$.

\item
Every regular leaf of $\func$ is contained in some chipped cylinder of $\func$.


\end{enumerate}
\end{lemma}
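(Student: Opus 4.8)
The plan is to treat the six items separately, most of them by directly unpacking the definition $\chp_i=\overline{\Lman_i}\setminus\fSing$, where $\Lman_1,\dots,\Lman_{\vl}$ are the connected components of $\Mman\setminus\KK$, and using the local normal forms of $\func$ at its critical points (Figure~\ref{fig:level_sets}, supplied by axiom~\ref{enum:F:hompol}). Items~(5) and~(6) I would dispose of first. For~(6): a regular leaf $\gamma$ is a leaf of $\func$, hence disjoint from every critical leaf, so $\gamma\cap\KK=\varnothing$; being connected it lies in a single $\Lman_i$, and $\gamma\cap\fSing=\varnothing$ then gives $\gamma\subset\chp_i$. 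For~(5): distinct components $\Lman'\neq\Lman''$ are disjoint open sets whose frontiers in $\Mman$ lie in the closed set $\KK$, and since each misses $\KK$ one gets $\Lman'\cap\overline{\Lman''}=\varnothing=\Lman''\cap\overline{\Lman'}$, whence $\overline{\Lman'}\cap\overline{\Lman''}\subset\KK$ and so $\chp'\cap\chp''\subset\KK$.

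For~(2) I would unwind the normal-form parametrization $\phi_i$. Since $\phi_i$ is injective off $\{-1,1\}\times\Circle$, the set $\phi_i([0;1)\times\Circle)$ is an embedded half-open cylinder whose frontier inside $\chp_i^{+}$ is $\phi_i((\{1\}\times\Circle)\setminus\Qman_i)$, a finite disjoint union of open arcs all lying in the single level set $\func^{-1}(\eta(1))$. If $\eta(1)$ is a regular value, this level set is a boundary component of $\Mman$ by axiom~\ref{enum:F:bd} and $\overline{\chp_i^{+}}$ meets no critical leaf; if $\eta(1)$ is a critical value, the arcs are connected components of the unique critical leaf $\Kman$ at that level minus $\fSing$, i.e.\ leaves of $\SingFol$, and $\overline{\chp_i^{+}}\cap\Kman$ is their union together with the finitely many critical points of $\Kman$ in their closures. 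The same reasoning applies to $\chp_i^{-}$ at level $\eta(0)$.

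Items~(3) and~(4) I would deduce from the classification of critical points. For~(3): if $\chp_i\cap\Kman_j=\varnothing$ but $z\in\overline{\chp_i}\cap\Kman_j$, then $z$ is a limit of points of $\chp_i$ not in $\chp_i$, hence $z\in\fSing$; were $z$ a saddle (degenerate or not), some component of $\Mman\setminus\KK$ abutting $z$ would have closure containing a separatrix arc of $\Kman_j$ at $z$, so $\chp_i=\overline{\Lman_i}\setminus\fSing$ would meet $\Kman_j$, a contradiction --- hence $z$ is a local extreme, its critical leaf is the single point $\{z\}$ (the zero set of $Q_1\cdots Q_q$ near $0$), so $\Kman_j=\{z\}$ and $\Mman$ is a punctured-disk chart near $z$. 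For~(4): if $\Mman$ is connected and $\overline{\chp}$ contains no saddle, then by~(2) each of $\overline{\chp^{-}},\overline{\chp^{+}}$ meets at most one critical leaf, which --- lacking a saddle --- is empty, a boundary circle, or (by the extreme case just handled) a single extreme point; thus $\overline{\chp}$ arises from $\chp$ by adjoining zero, one, or two extreme points and is a compact subsurface whose frontier in $\Mman$ lies in $\partial\Mman$, so connectedness of $\Int\Mman$ forces $\overline{\chp}=\Mman$. Then $\Mman$ is a cylinder each of whose ends is a boundary circle or a filled-in extreme point --- i.e.\ a cylinder, $2$-disk, or $2$-sphere --- $\KK$ is the set of those extreme points, $\Zman=\chp\cup\bigcup_j\regN{\Kman_j}=\Mman$, and $\func$ is one of the maps of Example~\ref{exmp:chipped_cyl}a)--c).

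Item~(1) is where the main obstacle lies. Since $\phi_i$ is injective on $[0;1)\times\Circle$, the image $\phi_i([0;1)\times\Circle)$ is an embedded half-open cylinder, hence orientable, and because a surface with boundary is orientable iff its interior is and the end arcs are part of the boundary, $\chp_i^{+}$ (and likewise $\chp_i^{-}$) is orientable. For $\chp_i$ itself, glued from $\chp_i^{-}$ and $\chp_i^{+}$ along the regular leaf $\phi_i(\{0\}\times\Circle)$, orientability can fail only through an orientation-reversing gluing along that leaf or through $\phi_i$ identifying two boundary arcs of the $\chp_i^{\pm}$ with a flip, turning them into interior arcs. When $\Pman=\bR$ the genuine embedding $\eta$ keeps the two ends at distinct levels $\eta(0)\neq\eta(1)$, so no $\{0\}$-arc is identified with a $\{1\}$-arc, and an orientation-reversing self-identification within one end would force $\func$ to have a local extremum along a one-sided curve, contradicting the absence of critical points on $\chp_i$; co-orienting all leaves simultaneously by the nowhere-zero $1$-form $d\func$ then makes the two pieces' orientations agree. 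For $\Pman=\Circle$ this fails, and the non-orientable chipped cylinder of Example~\ref{exmp:chipped_cyl}d) is exactly such a flipped identification of the two ends of a strip sitting over a common critical value. The hard part will be the precise control of where and how $\phi_i$ can fail to be injective on $\{-1,1\}\times\Circle$; once that is pinned down, (1) and the rest of the lemma are routine.
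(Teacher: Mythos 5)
The paper does not actually contain a proof of this lemma; immediately after stating it, the authors write ``The proof is left for the reader,'' so there is no paper argument to compare against. I will therefore assess the proposal on its own terms.

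Items (5) and (6) are correct and complete: your observation that $\partial\Lman_i\subset\KK$ gives $\overline{\Lman'}\cap\overline{\Lman''}\subset\KK$ cleanly, and the argument for (6) is routine. Items (2), (3), and (4) are essentially right, though loosely stated. In (2), ``this level set is a boundary component of $\Mman$ by axiom~\ref{enum:F:bd}'' is not really a direct consequence of that axiom; what you actually need is that a regular leaf in $\Int\Mman$ lies in the interior of $\Lman_i$, so the foliated cylinder structure would continue past it, contradicting the end of the parametrization --- this should be said. In (4), the phrase ``meets at most one critical leaf, which --- lacking a saddle --- is empty, a boundary circle, or ... a single extreme point'' conflates the critical leaf with the end of $\chp^{\pm}$: a critical leaf is never empty or a boundary circle. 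What you mean (and what the argument actually uses) is that the end of $\chp^\pm$ is either a boundary circle of $\Mman$ or a single local extreme, and the step that rules out any critical leaf with an arc (because the arc endpoints would be saddles and would lie in the closure) should be spelled out rather than implicitly absorbed into ``by (2).''

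The genuine gap is item (1), and you acknowledge it yourself: your proof that $\chp^{+}$ and $\chp^{-}$ are orientable, and that $\chp$ is orientable when $\Pman=\bR$, rests on the claim that $\phi_i$ restricted to $([0,1]\times\Circle)\setminus\Qman_i$ is an embedding. The paper only states that $\phi_i$ ``can be non-injective only at points of $\{-1,1\}\times\Circle$,'' which does not by itself exclude non-injectivity within a single end-circle $\{1\}\times\Circle$. You need (and do not prove) the stronger statement that any failure of injectivity only identifies points of $\{-1\}\times\Circle$ with points of $\{1\}\times\Circle$, never two distinct points of the same end; only then is $\chp^{\pm}$ homeomorphic to an annulus with finitely many boundary punctures and hence orientable, and only then does the $\Pman=\bR$ argument (two ends at distinct $\eta$-levels, hence no identification) close. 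You explicitly defer this (``The hard part will be the precise control of where and how $\phi_i$ can fail to be injective ... once that is pinned down''), so item (1) is left unproved. This is the main thing to fix; the rest of the lemma is, as you say, routine once that is settled.
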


The following theorem is the principal technical tool.
Let $\Mman$ be a (possibly non-orientable) compact surface, $\func\in\FMP$, $\chp$ be a chipped cylinder of $\func$, and $\Zman = \chp \bigcup \Bigl( \mathop{\cup}\limits_{\overline{\chp}\cap\Kman_j \not=\varnothing} \regN{\Kman_j} \Bigr)$ be its $\func$-regular neighborhood of $\chp$.
\begin{theorem}\label{th:shift_func_on_chip_nbh}
{\rm 1)} Let $\Vman\subset\chp$ be a regular leaf of $\func$, and $\dif\in\Stabilizer{\func|_{\Zman}}$ a diffeomorphism of $\Zman$ such that $\dif(\Vman)=\Vman$ and $\dif$ \myemph{reverses} orientation of $\Vman$.
Then every leaf of the singular foliation $\SingFol$ in $\Zman$ is $(\dif^2, +)$-invariant, i.e. $\dif^2$ fixes all critical points of $\func$ in $\Zman$ and preserves all other leaves of $\SingFol$ in $\Zman$ with their orientations.

{\rm 2)}
Suppose in addition that $\Zman$ is orientable and let $\flow$ be any Hamiltonian like flow of $\func|_{\Zman}$ on $\Zman$.
Then there exists a unique $\Cinfty$ function $\alpha:\Zman\to\bR$ such that $\dif^2 = \flow_{\alpha}$ on $\Zman$ and $\alpha=0$ at each fixed point of $\dif^2$ in $\Int{\chp}$ and for each local extreme $\pz$ of $\func$ in $\Zman$.
\end{theorem}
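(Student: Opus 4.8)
The plan is to carry $\dif$ over to the product situation of Theorem~\ref{th:shift_func_XS1_L} on $\Int\chp$, to produce there a shift function for $\dif^{2}$, and then to enlarge it first across the (at most two) critical leaves met by $\overline{\chp}$ and next across the collars of their $\func$-regular neighbourhoods lying outside $\chp$. First I would reduce statement~1) to the case of \emph{orientable} $\Zman$: if $\Zman$ is non-orientable one passes to its orientation double covering $\prj\colon\widetilde{\Zman}\to\Zman$, lifts $\dif$ to a diffeomorphism $\widetilde{\dif}$, sets $\widetilde{\func}=\func\circ\prj\in\FF(\widetilde{\Zman},\Pman)$, replaces $\widetilde{\dif}$ by its composition with the nontrivial covering transformation if necessary so that it leaves invariant a component $\widetilde{\Vman}$ of $\prj^{-1}(\Vman)$ (it then reverses orientation of $\widetilde{\Vman}$, $\prj$ being a local diffeomorphism), applies statement~1) to $\widetilde{\dif}$ on the orientable component of $\widetilde{\Zman}$ containing $\widetilde{\Vman}$, and projects the conclusion down. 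From now on $\Zman$, hence $\chp$, is orientable, and I fix once and for all a Hamiltonian-like flow $\flow$ of $\func|_{\Zman}$ on $\Zman$, which exists by \cite[Lemma~5.1]{Maksymenko:AGAG:2006} and whose orbits are exactly the leaves of $\SingFol$ inside $\Zman$.

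Next I would pin down the behaviour of $\dif$ on $\chp$. From $\func\circ\dif=\func$ it follows that $\dif$ permutes the critical leaves of $\func$ in $\Zman$ and the components of the complement of their union; the component containing $\Vman$ is $\chp$ with the critical leaves removed, and $\dif$ fixes it because $\dif(\Vman)=\Vman$, so $\dif(\chp)=\chp$ and $\dif(\Int\chp)=\Int\chp$. Identifying $\Int\chp$ by the chipped-cylinder immersion with an open subset of $(-1;1)\times\Circle$ of the form $\Xman\times\Circle$, the map $\dif|_{\Int\chp}$ preserves the first coordinate, and being fibre-wise a self-map of $\Circle$ whose degree is locally constant and equals $-1$ on $\Vman$, it has degree $-1$ on every regular leaf of $\func$ in $\Int\chp$. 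Since $\func(\chp^{-})$ and $\func(\chp^{+})$ are distinct nondegenerate subarcs of $\Pman$ meeting only in the $\func$-value of their common boundary leaf, $\dif$ cannot interchange $\chp^{-}$ and $\chp^{+}$; hence it preserves each of $\chp^{\pm}$, fixes the (by Lemma~\ref{lm:chp_cyl_prop}, at most one) critical leaf $\Kman$ met by $\overline{\chp^{\pm}}$ together with its $\func$-regular neighbourhood $\regN{\Kman}$, and — this is the foliated ``reflection has order two'' phenomenon — acts on $\Kman$, with its partition into leaves of $\SingFol$, as an orientation-reversing involution; consequently $\dif^{2}$ fixes every critical point of $\func$ in $\Zman$, fixes every arc-leaf of $\SingFol$ inside the critical leaves met by $\overline{\chp}$, and maps each collar of each such $\regN{\Kman}$ onto itself.

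To build $\alpha$, I would first apply Theorem~\ref{th:shift_func_XS1_L} to $\dif|_{\Int\chp}$ (all orbits of $\flow$ on $\Int\chp$ being periodic, so hypotheses~\ref{cond:star:1}--\ref{cond:star:finite_comp} hold trivially), obtaining a unique $\Cinfty$ function $\alpha\colon\Int\chp\to\bR$ with $\dif^{2}=\flow_{\alpha}$ there and $\alpha$ vanishing at the fixed points of $\dif^{2}$ in $\Int\chp$. Then I extend $\alpha$ over each critical leaf met by $\overline{\chp}$: when such a leaf is a point it is a local extreme $\pz$ by Lemma~\ref{lm:chp_cyl_prop}\ref{enum:lm:chp_cyl_prop:3}, and near $\pz$ the map $\func$ is a homogeneous polynomial, $\flow$ is its Hamiltonian flow in polar coordinates, $\dif$ fixes $\pz$ and preserves the orbits of $\flow$ there while reversing their orientation, so case~\ref{emum:lm:ext_alpha:3} of Lemma~\ref{lm:ext_alpha} (applied to $\dif^{2}$ with $\qdif=\dif$) extends $\alpha$ to a $\Cinfty$ function near $\pz$, whose value at $\pz$ is $0$ because $\dif$ has fixed points (Lemma~\ref{lm:lift_dif_circle}\ref{enum:lm:lift_dif_circle:fixed_pt}), at which $\alpha=0$, on every circle around $\pz$; and for a critical leaf $\Kman$ with saddle points I extend $\alpha$ over the saddles by case~\ref{emum:lm:ext_alpha:1} of Lemma~\ref{lm:ext_alpha} and along the arc-leaves of $\Kman$ by the local uniqueness of shift functions (Lemma~\ref{lm:local_uniqueness_shift_func}, Corollary~\ref{cor:local_uniqueness}), which by connectedness of $\Kman$ gives $\dif^{2}=\flow_{\alpha}$ on a neighbourhood of $\Kman$ meeting every collar of $\regN{\Kman}$; finally, on each such collar $C\cong(0;1]\times\Circle$ (foliated by circles) the map $\dif^{2}$ preserves the first coordinate and equals $\flow_{\alpha}$ near the $\Kman$-end, hence is fibre-wise of degree $1$, and Corollary~\ref{cor:shift_func_circle}\ref{cr:shift_s1:exist} applied fibre-wise (equivalently, Corollary~\ref{lm:cov_shift_func}) produces a shift function on all of $C$ agreeing with $\alpha$ near that end. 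Since on any overlap two shift functions of $\dif^{2}$ differ only by a locally constant multiple of the period of $\flow$, these pieces assemble into one $\Cinfty$ function $\alpha\colon\Zman\to\bR$ with $\dif^{2}=\flow_{\alpha}$, vanishing at the fixed points of $\dif^{2}$ in $\Int\chp$ and at the local extremes of $\func$ in $\Zman$, and unique by local uniqueness plus this normalisation; this is statement~2). Statement~1) then follows, since by Lemma~\ref{lm:char_Sid} the homotopy $H(x,t)=\flow(x,t\,\alpha(x))$ is an isotopy in $\Stabilizer{\func|_{\Zman}}$ from $\id_{\Zman}$ to $\dif^{2}$ through maps $\flow_{t\alpha}$ each preserving every orbit of $\flow$, so $\dif^{2}$ fixes every critical point (a fixed point of $\flow$) and restricts on every leaf $L$ of $\SingFol$ in $\Zman$ to a homeomorphism joined to $\id_{L}$ by a path, hence preserving the orientation of $L$.

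I expect the genuine difficulty to be the step I passed over as ``reflection has order two'': showing that the orientation-reversing diffeomorphism $\dif$ acts on each critical leaf met by $\overline{\chp}$ as an involution, so that $\dif^{2}$ fixes its saddle points and arc-leaves and does not permute the collars of its $\func$-regular neighbourhood that lie outside $\chp$. Only with this in hand is the extension of the shift function beyond $\Int\chp$ legitimate; it is the precise foliated incarnation of the rigidity statement from the introduction, and I anticipate deriving it from the facts that $\dif$ reverses orientations of the regular leaves in $\chp$ — hence reverses the cyclic order of the ``ends'' of $\chp$ along each such critical leaf, and an orientation-reversing permutation of a finite cyclically ordered set is a reflection — together with the orientability of $\regN{\Kman}$.
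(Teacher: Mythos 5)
Your plan is structurally parallel to the paper's but reverses the logical order: you aim to prove statement~2) by constructing the shift function on $\Int\chp$ via Theorem~\ref{th:shift_func_XS1_L}, extending it over the critical leaves and then over the remaining collars of $\regN{\Kman}$, and finally deriving statement~1) from 2) via Lemma~\ref{lm:char_Sid}; the paper instead proves 1) first and then uses it to assemble the shift function. Your reduction of 1) to the orientable case via the orientation double covering is also different from the paper, which simply exploits that the half-cylinders $\chp^{-}$ and $\chp^{+}$ are \emph{always} orientable (Lemma~\ref{lm:chp_cyl_prop}(1)) and works with them one at a time; the paper's route avoids lifting to a cover altogether.

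The genuine gap is exactly the step you flagged. Your sketch for ``reflection has order two'' is: $\dif$ reverses the cyclic order of the ``ends'' of $\chp$ along $\Kman$, so it acts as a dihedral reflection and $\dif^{2}$ is the identity on those ends. This only shows that $\dif^{2}$ fixes the arc-leaves of $\Kman$ lying in $\overline{\chp}$ (and hence their endpoints). It does \emph{not} by itself show that $\dif^{2}$ fixes the other edges of $\Kman$ — those bounded by collars of $\regN{\Kman}$ belonging to other chipped cylinders — nor that $\dif^{2}$ preserves those collars; yet your extension of $\alpha$ across $\Kman$ and $\regN{\Kman}$ (via Lemma~\ref{lm:ext_alpha} and Corollary~\ref{cor:local_uniqueness}) implicitly assumes that $\dif^{2}$ is orbit-preserving there, which is precisely what is in question. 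The paper closes this by a different cyclic-order argument, propagated through the graph $\Kman$: since $\dif^{2}$ preserves orientation, at every critical point $v$ it preserves the cyclic order of edges incident to $v$; once one edge at $v$ is known to be $(\dif^{2},+)$-invariant (coming from step 1a, i.e. from Theorem~\ref{th:shift_func_XS1_L} applied on $\chp^{-}$), a cyclic-order-preserving permutation of a finite cyclic set fixing one element fixes them all, and iterating over adjacent vertices of the connected graph $\Kman$ gives $(\dif^{2},+)$-invariance of \emph{all} edges, whence \cite[Lemma~7.4]{Maksymenko:TA:2020} yields it on all of $\regN{\Kman}$. You would need some such propagation step to make your extension legitimate. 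Separately, you do not address the case $\Kmm=\Kpp$ (the chipped cylinder meeting the same critical leaf from both ends, as in Example~\ref{exmp:chipped_cyl}d), where $\chp$ is not of the form covered by Theorem~\ref{th:shift_func_XS1_L} and where the shift functions $\alpha^{-}$ on $\chpmm$ and $\alpha^{+}$ on $\chppp$ must be checked to agree on $\chpmm\cap\chppp\cap\Kmm$; the paper handles this explicitly in step~2a) by comparing both to a third function $\alpha^{0}$ on $\Int\chp$ and to each other along the non-periodic orbits in $\Kmm$.
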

\begin{proof}
1) Let us mention that since $\dif$ reverses orientation of $\Vman$, it reverses orientations of all regular leaves in $\chp$.
Therefore those leaves are $(\dif^2,+)$-invariant, and we should prove the same for all other leaves of $\SingFol$ in $\Zman$.

First we introduce the following notation.
If $\KK\cap\overline{\chpmm} \not= \varnothing$, then let $\Kmm$ be a unique critical leaf of $\func$ intersecting $\overline{\chpmm}$, and let $\Rmm$ be its $\func$-regular neighborhood.
Otherwise, when $\KK\cap\overline{\chpmm}= \varnothing$, put $\Kmm=\Rmm=\varnothing$,
Define further $\Kpp$ and $\Rpp$ in a similar way with respect to $\chppp$.
Then  \[ \Zman = \Rmm \cup \chpmm \cup \chppp \cup \Rpp. \]
As those four sets are invariant with respect to $\dif$, it suffices to prove that $\dif^2$ preserves leaves of $\SingFol$ with their orientation for each of those sets.

1a) Let us show that \myemph{$\dif^2$ preserves all leaves of $\SingFol$ in $\chpmm$.}

Since $\chp^{-}$ is an orientable manifold, one can construct a Hamiltonian like flow of $\func$ on $\chp^{-}$.
Evidently, $\flow$ satisfies conditions~\ref{cond:star:1}-\ref{cond:star:finite_comp}.
Moreover, since $\dif$ reverses orientation of all periodic orbits of $\flow$ in $\chppp$, we get from Theorem~\ref{th:shift_func_XS1_L} that there exists a unique $\Cinfty$ function $\alpha:\chp\to\bR$ such that $\dif^2|_{\chpmm} = \flow_{\alpha}$ and $\alpha$ vanishes at fixed points of $\dif^2$ on regular leaves of $\func$ in $\chpmm$.
In particular, each non-periodic orbit of $\flow$ in $\chppp$ is $(\dif^2, +)$-invariant as well.

1b) Now let us prove that each leaf of $\SingFol$ in $\Kmm$ is also $(\dif^2,+)$-invariant.
This will imply $(\dif^2,+)$-invariantness of all leaves of $\SingFol$ in $\Rmm$ (see the proof of the implication (iii)$\Rightarrow$(i) in~\cite[Lemma~7.4]{Maksymenko:TA:2020}).

If $\Kmm=\varnothing$ there is nothing to prove.

If $\Kmm \cap \chpmm = \varnothing$, then by Lemma~\ref{lm:chp_cyl_prop}\ref{enum:lm:chp_cyl_prop:3} $\Kmm$ is a local extreme of $\func$.
Hence $\Kman$ is an element of $\SingFol$ and its is evidently invariant with respect $\dif$, and therefore with respect to $\dif^2$.

Thus assume that $\Kmm \cap \chpmm \not= \varnothing$.
Then this intersection contains a non-periodic orbit $\gamma$ of $\flow$.
Since $\gamma$ is $(\dif^2,+)$-invariant, it follows from~\cite[Claim~7.1.1]{Maksymenko:AGAG:2006} or~\cite[Lemma~7.4]{Maksymenko:TA:2020}, that \myemph{all elements of the foliation $\SingFol$ are also $(\dif^2,+)$-invariant}.

Let us recall a simple proof of that fact.
Indeed, let $v$ be a vertex of $\gamma$ being therefore a critical point of $\func$.
Then $\dif(v)=v$, whence $\dif^2$ preserves the set of all edges incident to $v$.
Moreover, as $\dif^2$ preserves orientation at $v$, it must also \myemph{preserve cyclic order of edges incoming to $v$}.
But since $\gamma$ (being one of those edges) is $(\dif^2,+)$-invariant, it follows that so are all other edges incident to $v$.
Applying the same arguments to those edges and so on, we will see that $\dif^2$ preserves all edges of $\Kman$ with their orientation.

The proofs for $\chppp$ and $\Rpp$ are similar.

\smallskip

2) Assume now that $\Zman$ is an orientable surface and let $\flow$ be any Hamiltonian like flow of $\func$.
We know from 1) that $\dif^2$ preserves all orbits of $\flow$ with their orientations.

2a) We claim that \myemph{there exists a unique $\Cinfty$ function $\alpha:\chp\to\bR$ such that $\dif^2|_{\chp} = \flow_{\alpha}$ and $\alpha$ vanishes at fixed points of $\dif^2$ on periodic orbits.}

If both $\Kmm$ and $\Kpp$ are non-empty and distinct, then the restriction of $\flow$ to $\chp$ satisfies conditions~\ref{cond:star:1}-\ref{cond:star:finite_comp}.
Moreover, as $\dif$ reverses orientation of all periodic orbits of $\flow$, the statement follows from Theorem~\ref{th:shift_func_XS1_L} as in 1a).

However, if $\Kmm=\Kpp$, as in Example~\ref{exmp:chipped_cyl}d), the situation is slightly more complicated: $\chp$ might be not of the form $([-1,1]\times\Circle) \setminus \Qman$ for some finite set $\Qman \subset \{-1,1\}\times\Circle$ and Theorem~\ref{th:shift_func_XS1_L} is not directly applicable.
Nevertheless, one can apply that theorem to each of the sets $\chpmm$, $\chppp$, and $\Int{\chp}$ and construct three functions
\begin{align*}
  &\alpha^{-}:\chpmm\to\bR, &
  &\alpha^{-}:\chppp\to\bR, &
  &\alpha^{0}:\Int{\chp}\to\bR
\end{align*}
satisfying $\dif^2|_{\chpmm} = \flow_{\alpha^{-}}$, $\dif^2|_{\chppp} = \flow_{\alpha^{+}}$, $\dif^2|_{\Int{\chp}} = \flow_{\alpha^{0}}$, and vanishing at fixed points of $\dif^2$ on  periodic orbits.
From uniqueness of such functions, we get that $\alpha^{-} = \alpha^{0}$ on $\chpmm \cap \Int{\chp}$ and $\alpha^{+} = \alpha^{0}$ on $\chppp \cap \Int{\chp}$.

A possible problem is that $\chp$ intersects $\Kmm$ from ``both sides'', and therefore a priori $\alpha^{+}$ and $\alpha^{-}$ can differ on $\chpmm \cap \chppp \cap \Kmm$.
However, $\chpmm \cap \chppp \cap \Kmm$ consists of non-periodic orbits of $\flow$, and therefore foreach such orbit $\gamma$ the identity $\dif^2|_{\gamma} = \flow_{\alpha^{-}}|_{\gamma} = \flow_{\alpha^{+}}|_{\gamma}$ implies that $\alpha^{-} = \alpha^{+}$ on $\gamma$.

Thus $\alpha^{-} = \alpha^{+}$ on $\chpmm \cap \chppp \cap \Kmm$, and therefore those functions define a well defined $\Cinfty$ function $\alpha:\chp\to\bR$ satisfying $\dif^2|_{\Int{\chp}} = \flow_{\alpha}$ and $\alpha$ vanishes at fixed points of $\dif^2$ on periodic orbits.

2b) It remains to show that \myemph{$\alpha$ extends to a shift function for $\dif^2$ on $\Rmm\cup\Rpp$} and thus on all of $\Zman$.
It suffices to prove that for $\Rmm$.

If $\Kmm=\varnothing$, then $\Rmm=\varnothing$ and there is nothing to prove.

If $\Kmm$ is a local extreme of $\func$, then by Lemma~\ref{lm:ext_alpha} (cases \ref{emum:lm:ext_alpha:1} and~\ref{emum:lm:ext_alpha:3} for non-degenerate and degenerate critical point) $\alpha$ can be defined at $\Kmm$ so that it becomes $\Cinfty$.

In all other cases $\Kmm$ contains a non-periodic orbit of $\flow$.
Then by the implication (ii)$\Rightarrow$(iv) of \cite[Lemma~7.4]{Maksymenko:TA:2020}, $\alpha$ extends to a $\Cinfty$ shift function for $\dif^2$ on $\Rmm$.
It remains to prove the following lemma:
\begin{sublemma}
$\alpha(\pz)=0$ for every local extreme of $\func$ in $\Zman$.
\end{sublemma}
\begin{proof}
Indeed, it is evident, that arbitrary small neighborhood of $\pz$ contains a periodic orbit $\gamma$ of $\flow$.
Since $\dif$ reverses orientations of $\gamma$, we have from Lemma~\ref{lm:lift_dif_circle}\ref{enum:lm:lift_dif_circle:fixed_pt} that $\dif$ always has at least one fixed point $\px \in \gamma$ (in fact it has even two such points).
Hence by Corollary~\ref{cor:shift_func_circle}\ref{enum:cor:shift_func_circle:2}, $\alpha(\px)=0$.
Then by continuity of $\alpha$ we should have that $\alpha(\pz)=0$ as well.
\end{proof}

Theorem~\ref{th:shift_func_on_chip_nbh} is completed.
\end{proof}

\section{Creating almost periodic diffeomorphisms}\label{sect:change_dif}
Let $\Mman$ be a compact orientable surface, $\func\in\FMP$, $\Zman$ be an $\func$-adapted subsurface, $\dif\in\Stabilizer{\func}$ be such that $\dif(\Zman) = \Zman$, and $m\geq2$.
If $\dif^{m}|_{\Zman}$ is isotopic to the identity of $\Zman$ by $\func$-preserving isotopy, then the following Lemma~\ref{lm:gm_in_Sid} gives conditions when one can change $\dif$ on $\Mman\setminus\Zman$ so that its $m$-power will be $\func$-preserving isotopic to the identity on all of $\Mman$.
The proof follows the line of~\cite[Lemma~13.1(3)]{Maksymenko:TA:2020} in which $\Mman$ is a $2$-disk or a cylinder.
\begin{lemma}\label{lm:gm_in_Sid}{\rm(cf.~\cite[Lemma~13.1(3)]{Maksymenko:TA:2020})}
Let $\Mman$ be a connected compact orientable surface, $\func\in\FMP$, $\Zman$ be an $\func$-adapted subsurface, and $\dif\in\Stabilizer{\func}$ be such that $\dif(\Zman) = \Zman$.
Suppose that the following conditions hold.
\begin{enumerate}[leftmargin=*, label={\rm(\arabic*)}]
\item\label{enum:lm:gm_in_Sid:1}
Each component of $\Zman$ contains at least one saddle point of $\func$.

\item\label{enum:lm:gm_in_Sid:3}
$\dif^m$ is isotopic in $\Stabilizer{\func}$ to a diffeomorphism $\tau$ fixed on some neighborhood of $\Zman$ {\rm(}by \cite[Lemma~7.1]{Maksymenko:TA:2020} this condition holds if there exists a $\Cinfty$ function $\alpha:~\Zman\to\bR$ such that $\dif^m|_{\Zman} = \flow_{\alpha}${\rm)}.

\item\label{enum:lm:gm_in_Sid:2}
There exists $m\geq2$ and $a\geq1$ such that connected components of $\overline{\Mman\setminus\Zman}$ can be enumerated as follows:
\begin{equation}\label{equ:Yij}
\begin{array}{cccc}
\Yman_{1,0}   & \Yman_{1,1}   & \cdots & \Yman_{1,m-1}   \\
\Yman_{2,0}   & \Yman_{2,1}   & \cdots & \Yman_{2,m-1}   \\
\cdots        & \cdots        & \cdots & \cdots        \\
\Yman_{a,0}   & \Yman_{a,1}   & \cdots & \Yman_{a,m-1}
\end{array}
\end{equation}
so that $\dif(\Yman_{i,j})=\Yman_{i,j+1\bmod m}$ for all $i,j$, that is $\dif$ cyclically shifts columns in~\eqref{equ:Yij}.
\end{enumerate}

Then there exists $\gdif\in\Stabilizer{\func}$ such that $\gdif=\dif$ on $\Zman$ and $\gdif^{m} \in\StabilizerId{\func}$, that is $\gdif^{m} = \flow_{\beta}$ for some $\Cinfty$ function $\beta:\Mman\to\bR$.
\end{lemma}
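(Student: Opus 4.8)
The plan is to obtain $\gdif$ by a ``cyclic untwisting'' of $\dif$ along the columns of~\eqref{equ:Yij}, absorbing the resulting monodromy into a shift along a Hamiltonian‑like flow. Fix, once and for all, a Hamiltonian‑like flow $\flow$ for $\func$ on $\Mman$; it is global since $\Mman$ is compact. The first step is to derive from~\ref{enum:lm:gm_in_Sid:3} a single $\Cinfty$ function $\gamma\colon\Mman\to\bR$ with $\dif^{m}=\flow_{\gamma}$ on $\Zman$. Indeed, $\StabilizerId{\func}$ is the identity path component of the topological group $\Stabilizer{\func}$, hence a normal subgroup; as $\dif^{m}$ is isotopic in $\Stabilizer{\func}$ to some $\tau$ which equals $\id$ near $\Zman$, the diffeomorphism $\dif^{m}\circ\tau^{-1}$ lies in $\StabilizerId{\func}$, so by Lemma~\ref{lm:char_Sid} it equals $\flow_{\gamma}$ for a $\Cinfty$ function $\gamma$; restricting to $\Zman$, where $\tau=\id$, gives $\dif^{m}=\flow_{\gamma}$ there. (When only $\dif^{m}|_{\Zman}=\flow_{\alpha}$ is given, take $\gamma$ to be any $\Cinfty$ extension of $\alpha$ to $\Mman$.)

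Next I would define $\gdif$ to equal $\dif$ on $\Zman\cup\bigcup_{i}\bigcup_{j<m-1}\Yman_{i,j}$ and to equal the global diffeomorphism $\flow_{\gamma}\circ\dif^{-(m-1)}$ on $\bigcup_{i}\Yman_{i,m-1}$. Both $\dif$ and $\flow_{\gamma}\circ\dif^{-(m-1)}$ are $\Cinfty$ diffeomorphisms of $\Mman$ preserving $\func$ (recall $\flow_{\gamma}$ preserves each leaf of $\func$), and since $\flow_{\gamma}=\dif^{m}$ on $\Zman$ they coincide on $\Zman$, hence to all orders along $\partial\Zman$; because $\Zman$ is a subsurface separating the pieces $\Yman_{i,m-1}$ from the rest of $\Mman$, the resulting $\gdif$ is a well‑defined $\Cinfty$ diffeomorphism with $\gdif\in\Stabilizer{\func}$, $\gdif=\dif$ on $\Zman$, and $\gdif$ cyclically shifting the columns of~\eqref{equ:Yij} by~\ref{enum:lm:gm_in_Sid:2}. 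Tracing the cycle $\Yman_{i,j}\to\cdots\to\Yman_{i,j}$ and letting the factors $\dif^{\pm(m-1)}$ telescope, one computes $\gdif^{m}=\flow_{\gamma}$ on $\Zman\cup\bigcup_{i}\Yman_{i,0}$ and $\gdif^{m}|_{\Yman_{i,j}}=(\dif^{j}\circ\flow_{\gamma}\circ\dif^{-j})|_{\Yman_{i,j}}$ for $1\le j\le m-1$.

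It remains to prove $\gdif^{m}\in\StabilizerId{\func}$, which by Lemma~\ref{lm:char_Sid} amounts to producing a $\Cinfty$ function $\beta\colon\Mman\to\bR$ with $\gdif^{m}=\flow_{\beta}$. Each $\dif^{j}\circ\flow_{\gamma}\circ\dif^{-j}$ is a conjugate of $\flow_{\gamma}\in\StabilizerId{\func}$, hence lies in the normal subgroup $\StabilizerId{\func}$, so by Lemma~\ref{lm:char_Sid} it equals $\flow_{\gamma_{j}}$ for some $\Cinfty$ function $\gamma_{j}\colon\Mman\to\bR$. Restricting to $\Zman$ and using $\dif(\Zman)=\Zman$ and $\flow_{\gamma}=\dif^{m}$ on $\Zman$ gives $\flow_{\gamma_{j}}=\dif^{j}\circ\dif^{m}\circ\dif^{-j}=\dif^{m}=\flow_{\gamma}$ on $\Zman$; since by~\ref{enum:lm:gm_in_Sid:1} each component of $\Zman$ contains a non‑closed orbit of $\flow$, shift functions on $\Zman$ are unique (Corollary~\ref{cor:local_uniqueness} together with continuity at the finitely many critical points), hence $\gamma_{j}=\gamma$ on $\Zman$. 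Then $\beta$, defined as $\gamma$ on $\Zman\cup\bigcup_{i}\Yman_{i,0}$ and as $\gamma_{j}$ on $\Yman_{i,j}$ for $j\ge1$, satisfies $\gdif^{m}=\flow_{\beta}$; moreover each difference $\gamma_{j}-\gamma$ is a $\Cinfty$ function on $\Mman$ vanishing on the codimension‑zero set $\Zman$, hence vanishing to infinite order along $\partial\Zman$, so $\beta$ is $\Cinfty$, finishing the proof.

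I expect the last paragraph to be the main obstacle. The construction itself is elementary bookkeeping; the difficulty is showing that the ``corrected monodromies'' $\dif^{j}\circ\flow_{\gamma}\circ\dif^{-j}$ on the various $\Yman_{i,j}$ patch together with $\flow_{\gamma}$ across $\partial\Zman$ into a globally $\Cinfty$ shift. This is precisely where hypothesis~\ref{enum:lm:gm_in_Sid:1} is essential, since it kills the ``winding'' indeterminacy of shift functions along periodic orbits by making the shift function unique on $\Zman$; it also relies on the fact that $\func$‑preserving diffeomorphisms which agree on the codimension‑zero piece $\Zman$ necessarily agree to all orders along $\partial\Zman$, which is what makes the glued $\gdif$ and $\beta$ smooth.
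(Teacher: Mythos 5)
Your proof is correct and follows essentially the same strategy as the paper's: untwist $\dif$ on the last column of~\eqref{equ:Yij}, glue smoothly by using that the two pieces agree on the codimension-zero set $\Zman$, recognize $\gdif^m|_{\Yman_{i,j}}$ as a conjugate of a reference shift, obtain shift functions $\gamma_j$ for these conjugates via normality of $\StabilizerId{\func}$ and Lemma~\ref{lm:char_Sid}, and patch them into a global $\beta$ using uniqueness of shift functions on $\Zman$, which is forced by condition~\ref{enum:lm:gm_in_Sid:1}.

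The only genuine deviation from the paper's construction is cosmetic: you set $\gdif|_{\Yman_{m-1}} = \flow_{\gamma}\circ\dif^{-(m-1)}$ with $\flow_{\gamma} = \dif^{m}\circ\tau^{-1}$, whereas the paper takes $\gdif|_{\Yman_{m-1}} = \tau^{-1}\circ\dif$. These are genuinely different diffeomorphisms (they differ by replacing $\tau^{-1}$ with its conjugate $\dif^{m-1}\circ\tau^{-1}\circ\dif^{-(m-1)}$), but both restrict to $\dif$ on $\Zman$ and both have $m$-th power of the form $\flow_{\beta}$, so either choice works; correspondingly your $\gdif^{m}|_{\Yman_{i,j}}$ is the conjugate $\dif^{j}\circ(\dif^{m}\circ\tau^{-1})\circ\dif^{-j}$ while the paper's is $\dif^{j}\circ(\tau^{-1}\circ\dif^{m})\circ\dif^{-j}$. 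One small inaccuracy: the parenthetical ``when only $\dif^{m}|_{\Zman}=\flow_{\alpha}$ is given, take $\gamma$ to be any $\Cinfty$ extension of $\alpha$'' is too hasty, since $\flow_{\gamma}$ for an arbitrary smooth $\gamma$ need not be a diffeomorphism --- Lemma~\ref{lm:char_Sid} produces a shift function from a diffeomorphism, not the other way around; the route from that sufficient condition to the actual hypothesis~\ref{enum:lm:gm_in_Sid:3} is through the cited Lemma~7.1 of \cite{Maksymenko:TA:2020}, which supplies the required $\tau$. This aside does not affect your main argument, which correctly uses the isotopy hypothesis directly.
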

\begin{proof}
Let $\Yman_j = \bigcup\limits_{i=1}^{a}\Yman_{i,j}$, $j=0,\ldots,m-1$, be the union of components from the same column of~\eqref{equ:Yij}.
Then $\dif(\Yman_j) = \Yman_{j+1\bmod m}$.
Notice that condition~\ref{enum:lm:gm_in_Sid:2} implies that $\dif^{m}(\Yman_{i,j}) = \Yman_{i,j}$ for all $i,j$.

We will show that the the desired diffeomorphism $\gdif\in\Stabilizer{\func}$ can be defined by the formula:
\[
\gdif(\px) =
\begin{cases}
\dif(\px), & \px \in \Zman \cup \Yman_0 \cup \cdots \cup \Yman_{m-2}, \\
\tau^{-1}\circ\dif(\px), & \px \in \Yman_{m-1}.
\end{cases}
\]
Indeed, by definition $\gdif = \dif$ on $\Zman$.
Moreover, as $\tau$ is fixed on some neighborhood of $\Zman$, it also fixed near $\Zman\cap\Yman_{m-1}$.
Therefore $\gdif = \dif = \tau^{-1}\circ\dif$ near $\Zman\cap\Yman_{m-1}$, and so $\gdif$ is a well defined $\Cinfty$ map.
It remains to prove that $\gdif^m = \flow_{\beta} \in \StabilizerId{\func}$ for some $\Cinfty$ function $\beta:\Mman\to\bR$.

Let $\flow$ be a Hamiltonian flow for $\func$.
Since $\tau$ and $\dif^m$ are isotopic in $\Stabilizer{\func}$, it follows that $\tau^{-1}\circ\dif^m\in\StabilizerId{\func}$.
Hence by Lemma~\ref{lm:char_Sid}, $\tau^{-1}\circ\dif^{m} = \flow_{\alpha}$ for some $\Cinfty$ function $\alpha:\Mman\to\bR$.

Since $\StabilizerId{\func}$ is a normal subgroup of $\Stabilizer{\func}$, it follows that
\[
\dif^{j} \circ (\tau^{-1}\circ\dif^{m}) \circ \dif^{-j} =  \dif^{j} \circ \tau^{-1}\circ\dif^{m-j} \in \StabilizerId{\func},
\qquad j=0,\ldots,m-1,
\]
as well.
Therefore, again by Lemma~\ref{lm:char_Sid}, $\dif^{j} \circ \tau^{-1}\circ\dif^{m-j} = \flow_{\alpha_j}$ for some $\Cinfty$ function $\alpha_j:\Mman\to\bR$.

As $\tau$ is fixed on some neighborhood of $\Zman$, it follows that for each $j$
\[
  \flow_{\alpha_{j}}  = \dif^{j} \circ \tau^{-1}\circ\dif^{m-j} = \tau^{-1}\circ\dif^{m} = \flow_{\alpha}
  \ \text{on} \ \Zman.
\]
Then the assumption~\ref{enum:lm:gm_in_Sid:1} that every connected component $\Zman'$ of $\Zman$ contains a saddle point, implies that $\flow$ has a non-closed orbit $\gamma$ in $\Zman'$.
Therefore $\alpha=\alpha_j$ on $\gamma$.
Since $\Zman'$ is connected, it follows from local uniqueness of shift functions for $\tau^{-1}\circ\dif^{m}|_{\Zman}$ (see Corollary~\ref{cor:local_uniqueness}) that $\alpha=\alpha_j$ near $\Zman'$ for all $j=0,\ldots,m-1$.
Hence $\alpha=\alpha_j$ near all of $\Zman$ for all $j=0,\ldots,m-1$.

Thus we obtain a well-defined $\Cinfty$ function $\beta:\Mman\to\bR$ given by:
\[
\beta(\px) =
  \begin{cases}
    \alpha(\px), & \px\in\Zman, \\
    \alpha_j(\px), & \px\in\Yman_j, \ j=0,\ldots,m-1.
  \end{cases}
\]

We claim that $\gdif^m = \flow_{\beta}$.

a) Indeed, if $\px\in\Zman$, then $\gdif^m(\px) = \dif^m(\px) = \flow_{\alpha}(\px) = \flow_{\beta}(\px)$.

b) Also notice that $\gdif(\Yman_{i,j})=\Yman_{i,j+1\bmod m}$ and $\gdif(\Yman_j)=\Yman_{j+1\bmod m}$.
Then $\gdif^m|_{\Yman_j}$ is the following composition of maps:
\[
\Yman_j \ \xrightarrow{~\dif~} \ \Yman_{j+1} \ \xrightarrow{~\dif~} \
 \cdots \ \xrightarrow{~\dif~} \ \Yman_{m-1}        \ \xrightarrow{~\tau^{-1}\circ\dif~}  \
 \Yman_{0} \ \xrightarrow{~\dif~} \ \cdots \ \xrightarrow{~\dif~} \ \Yman_j.
\]
which thus coincides with $\dif^{j} \circ \tau^{-1} \circ \dif^{m-j} = \flow_{\alpha_{j}} = \flow_{\beta}$.
\end{proof}

\section{Proof of Theorem~\ref{th:exist:g_rev_or}}\label{sect:proof:th:exist:g_rev_or}
Let $\Mman$ be a connected compact surface, $\func\in\FMP$, $\dif\in\Stabilizer{\func}$, $\Aman$ be the union of all regular leaves of $\func$ being $\minv{\dif}$-invariant and $\Kman_1,\ldots,\Kman_{\vk}$ be all the critical leaves of $\func$ such that $\overline{\Aman}\cap\Kman_i\not=\varnothing$.
For $i=1,\ldots,\vk$, let $\regN{\Kman_i}$ be an $\func$-regular neighborhood of $\Kman_i$ chosen so that $\regN{\Kman_i} \cap \regN{\Kman_j} = \varnothing$ for $i\not= j$ and
\[
   \Zman := \Aman \bigcup  \Bigl( \mathop{\cup}\limits_{i=1}^{\vk} \regN{\Kman_i} \Bigr).
\]
Assume that $\Zman$ is non-empty, orientable and every connected component $\gamma$ of $\partial\Zman \cap \Int{\Mman}$ separates $\Mman$.
We have to prove that there exists $\gdif\in\Stabilizer{\func}$ which coincide with $\dif$ on $\Zman$ and such that $\gdif^2 \in \StabilizerId{\func}$.

\begin{lemma}\label{shift}
There exists a unique $\Cinfty$ function $\alpha:\Zman\to\bR$ such that $\dif^2|_{\Zman} = \flow_{\alpha}$ and $\alpha=0$ at each fixed point of $\dif^2$ on $\minv{\dif}$-invariant regular leaves of $\func$.
\end{lemma}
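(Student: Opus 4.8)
The plan is to cut $\Zman$ into the $\func$-regular neighborhoods of the chipped cylinders of $\func$ contained in it, construct the shift function on each of these pieces by Theorem~\ref{th:shift_func_on_chip_nbh}(2), and then glue the pieces together using the local uniqueness results of Section~\ref{sect:shift}.

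First I would fix the data and record the decomposition. Since $\Zman$ is orientable, $\func|_{\Zman}$ admits a Hamiltonian-like flow $\flow$ on $\Zman$; fix one, and note that its restriction to any $\func$-adapted subsurface of $\Zman$ is again Hamiltonian-like. All shift functions below are taken with respect to $\flow$. Since $\dif$ preserves $\func$, it maps each level set of $\func$ onto itself and hence permutes the critical leaves $\Kman_1,\ldots,\Kman_{\vk}$; choosing a common $\eps$ for those among $\Kman_1,\ldots,\Kman_{\vk}$ lying in a single level set (and $\eps$ small enough for disjointness) makes $\dif(\regN{\Kman_i})=\regN{\dif(\Kman_i)}$, so that $\dif(\Zman)=\Zman$. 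The key structural observation is the following: if a chipped cylinder $\chp$ of $\func$ contains a regular leaf $\Vman$ of $\Aman$, then $\dif(\Vman)=\Vman$, and since distinct chipped cylinders have disjoint regular parts (Lemma~\ref{lm:chp_cyl_prop}) this forces $\dif(\chp)=\chp$; as $\dif$ reverses orientation of $\Vman$ it reverses every regular leaf of $\chp$ (as at the beginning of the proof of Theorem~\ref{th:shift_func_on_chip_nbh}), so $\chp\subset\Aman$, every critical leaf meeting $\overline{\chp}$ lies among $\Kman_1,\ldots,\Kman_{\vk}$, and the $\func$-regular neighborhood $\Zman_{\chp}$ of $\chp$ is a $\dif$-invariant subsurface contained in $\Zman$. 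Conversely $\Aman$ is covered by such chipped cylinders (every regular leaf lies in one, Lemma~\ref{lm:chp_cyl_prop}) and each $\regN{\Kman_i}$ lies in some such $\Zman_{\chp}$, so $\Zman=\bigcup_{\chp}\Zman_{\chp}$, the union taken over all chipped cylinders $\chp\subset\Aman$.

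Applying Theorem~\ref{th:shift_func_on_chip_nbh}(2) to $\dif|_{\Zman_{\chp}}$, $\flow|_{\Zman_{\chp}}$, and any leaf $\Vman\subset\chp$ of $\Aman$ yields, for each such $\chp$, a unique $\Cinfty$ function $\alpha_{\chp}\colon\Zman_{\chp}\to\bR$ with $\dif^2|_{\Zman_{\chp}}=\flow_{\alpha_{\chp}}$, vanishing at every fixed point of $\dif^2$ in $\Int\chp$ and at every local extreme of $\func$ in $\Zman_{\chp}$. The main obstacle is the gluing of the $\alpha_{\chp}$. For $\chp\neq\chp'$ the overlap $\Zman_{\chp}\cap\Zman_{\chp'}$ is a union of neighborhoods $\regN{\Kman_i}$ of \emph{saddle-containing} critical leaves: a critical leaf that is a single local-extreme point lies in the closure of exactly one chipped cylinder (since the leaves of $\func$ near such a point are small circles, all belonging to one chipped cylinder, and distinct chipped cylinders have disjoint regular parts, by Lemma~\ref{lm:chp_cyl_prop}), and so is never shared. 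On such an $\regN{\Kman_i}$ the functions $\alpha_{\chp}$ and $\alpha_{\chp'}$ are both $\Cinfty$ shift functions for $\dif^2$, and they agree along any non-closed orbit $\gamma\subset\Kman_i\setminus\fSing$ of $\flow$ (from $\flow_{\alpha_{\chp}}|_{\gamma}=\dif^2|_{\gamma}=\flow_{\alpha_{\chp'}}|_{\gamma}$ and uniqueness of the shift function on a non-closed orbit); since $\regN{\Kman_i}\setminus\fSing$ is open, connected, and free of fixed points of $\flow$, Corollary~\ref{cor:local_uniqueness} gives $\alpha_{\chp}=\alpha_{\chp'}$ on $\regN{\Kman_i}\setminus\fSing$, and hence on $\regN{\Kman_i}$ by continuity. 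Thus the $\alpha_{\chp}$ patch to a well-defined $\Cinfty$ function $\alpha\colon\Zman\to\bR$ with $\dif^2|_{\Zman}=\flow_{\alpha}$.

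Finally I would verify the vanishing property and uniqueness. Every $\minv{\dif}$-invariant regular leaf of $\func$ lies in the interior of its chipped cylinder, since $\Aman$ is open (see the proof of Lemma~\ref{lm:dZ_prop}); hence $\alpha=0$ at every fixed point of $\dif^2$ on such a leaf. For uniqueness, if $\alpha'$ is any $\Cinfty$ function with $\dif^2|_{\Zman}=\flow_{\alpha'}$ and $\alpha'=0$ at the $\dif^2$-fixed points lying on $\minv{\dif}$-invariant regular leaves, then on each $\Zman_{\chp}$ it is a $\Cinfty$ shift function for $\dif^2$ vanishing at the $\dif^2$-fixed points in $\Int\chp$ (all of which lie on $\minv{\dif}$-invariant leaves, since $\chp\subset\Aman$) and, by continuity, also at each local extreme $\pz$ of $\func$ in $\Zman_{\chp}$ — for $\pz$ is accumulated by $\dif$-reversed periodic orbits of $\flow$ on whose $\dif^2$-fixed points $\alpha'$ vanishes; so the uniqueness part of Theorem~\ref{th:shift_func_on_chip_nbh}(2) forces $\alpha'=\alpha_{\chp}$ on each $\Zman_{\chp}$, whence $\alpha'=\alpha$ on $\Zman$.
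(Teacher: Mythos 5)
Your proof follows essentially the same route as the paper's: decompose $\Zman$ into the $\func$-regular neighborhoods $\Zman_{\chp}$ of those chipped cylinders $\chp$ lying in $\Aman$, obtain each local shift function from Theorem~\ref{th:shift_func_on_chip_nbh}(2), and glue the pieces by the uniqueness of shift functions on overlaps (Corollary~\ref{cor:local_uniqueness}), which contain non-closed orbits. You spell out several points the paper leaves implicit (why $\dif(\Zman_{\chp})=\Zman_{\chp}$, why overlaps involve only saddle-containing critical leaves, and the uniqueness clause of the lemma), but the argument is the same in substance.
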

\begin{proof}
Let $\Vman$ be a regular leaf $\Vman$ of $\func$, and $\chp$ a chipped cylinder of $\func$ such that $\Vman\subset\Int{\chp}$.
If $\Vman$ is $\minv{\dif}$-invariant, then so is every other regular leaf $\Vman'\subset\Int{\chp}$.
This implies that $\Zman$ is a union of $\func$-regular neighborhoods $\Zman_1,\ldots,\Zman_{\vl}$ of some chipped cylinders $\chp_1,\ldots,\chp_{\vl}$ of $\func$.

By Theorem~\ref{th:shift_func_on_chip_nbh}, for each $i=1,\ldots,\vl$ there exists a unique $\Cinfty$ function $\alpha_i:\Zman_i\to\bR$ such that $\dif^2|_{\Zman_i} = \flow_{\alpha_i}$ and $\alpha=0$ at each fixed point of $\dif^2$ on each on $\minv{\dif}$-invariant regular leaf of $\func$ in $\Zman_i$.

Notice that if $\Zman_i \cap \Zman_j \not=\varnothing$, then every connected component $\Wman$ of that intersection always contains a non-periodic orbit $\gamma$ of $\flow$.
Therefore by uniqueness of shift functions (Corollary~\ref{cor:local_uniqueness}) we obtain that $\alpha_i=\alpha_j$ on $\Wman$.

Hence the functions $\{ \alpha_i \}_{i=1,\ldots,\vl}$ agree on the corresponding intersections, and therefore they define a unique $\Cinfty$ function $\alpha:\Zman\to\bR$ such that $\dif^2|_{\Zman}=\flow_{\alpha}$.
Then $\alpha=0$ at each on $\minv{\dif}$-invariant regular leaf of $\func$ in $\Zman_i$.
\end{proof}
If $\Mman=\Zman$, then theorem is proved.
Thus suppose that $\Mman\not=\Zman$.
\begin{lemma}\label{pairs}
The number of connected components $\overline{\Mman\setminus\Zman}$ is \myemph{even}, and they can be enumerated by pairs of numbers:
\begin{equation}\label{equ:Yij:2}
\begin{aligned}
\Yman_{1,0} \quad \Yman_{2,0} \quad \ldots \quad \Yman_{a,0} \\
\Yman_{1,1} \quad \Yman_{2,1} \quad \ldots \quad \Yman_{a,1}
\end{aligned}
\end{equation}
for some $a>1$ so that $\dif$ exchanges the rows in~\eqref{equ:Yij:2}, that is $\dif(\Yman_{i,0})=\Yman_{i,1}$ and $\dif(\Yman_{i,1})=\Yman_{i,0}$ for each $i$.
\end{lemma}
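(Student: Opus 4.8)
The plan is to prove that $\dif$ acts on the finite set of connected components of $\overline{\Mman\setminus\Zman}$ as a \emph{free involution}; once that is known the statement is pure permutation bookkeeping: choose one component from each $\dif$-orbit, label it $\Yman_{i,0}$, put $\Yman_{i,1}:=\dif(\Yman_{i,0})$, and then automatically $\dif(\Yman_{i,1})=\dif^2(\Yman_{i,0})=\Yman_{i,0}$, while the number of components is $2a$ for some $a\geq1$ since it is positive ($\varnothing\neq\Zman\neq\Mman$) and a free involution has no one-element orbit. First I would record that $\dif(\Zman)=\Zman$: as $\Aman$ is a union of $\dif$-invariant regular leaves, $\overline{\Aman}$ is $\dif$-invariant, hence $\dif$ permutes the critical leaves $\Kman_1,\dots,\Kman_{\vk}$ meeting $\overline{\Aman}$; since $\func\circ\dif=\func$, critical leaves in one $\dif$-orbit share a critical value, so choosing the $\func$-regular neighbourhoods with a common $\eps$ within each orbit gives $\dif(\regN{\Kman_i})=\regN{\dif(\Kman_i)}$ and hence $\dif(\Zman)=\Zman$. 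Consequently $\dif$ permutes both the components of $\overline{\Mman\setminus\Zman}$ and the components of $\partial\Zman\cap\Int\Mman$.

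The structural input I would use is property~\ref{cond:B}. As observed right after its formulation, \ref{cond:B} provides a bijection $\gamma\mapsto\Uman(\gamma)$ from the components of $\partial\Zman\cap\Int\Mman$ onto the components of $\overline{\Mman\setminus\Zman}$, where $\Uman(\gamma)$ is the unique component of $\overline{\Mman\setminus\Zman}$ adjacent to $\gamma$. Write $\Uman_1,\dots,\Uman_p$ for these components and $\gamma_j$ for the unique boundary circle with $\Uman(\gamma_j)=\Uman_j$, so $\gamma_j=\partial\Uman_j\cap\partial\Zman$. This bijection is $\dif$-equivariant, because $\dif$ sends a boundary circle of $\Zman$ to a boundary circle of $\Zman$ and the component adjacent to $\gamma$ to the component adjacent to $\dif(\gamma)$; hence $\Uman(\dif(\gamma))=\dif(\Uman(\gamma))$.

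The heart of the argument is that \emph{$\dif^2(\gamma_j)=\gamma_j$ for every $j$}. For this I would invoke Lemma~\ref{shift}: there is a $\Cinfty$ function $\alpha\colon\Zman\to\bR$ with $\dif^2|_{\Zman}=\flow_{\alpha}$ for a Hamiltonian-like flow $\flow$ of $\func|_{\Zman}$ on $\Zman$ (here the orientability hypothesis on $\Zman$ is used). Then $\dif^2$ carries every orbit of $\flow$ in $\Zman$ into itself; in particular $\dif^2(\gamma_j)=\gamma_j$, since $\gamma_j\subset\Zman$ is a regular leaf of $\func$ and therefore a single orbit of $\flow$. Combining this with the equivariant bijection, $\dif^2(\Uman_j)=\Uman(\dif^2(\gamma_j))=\Uman(\gamma_j)=\Uman_j$ for all $j$, so the permutation of $\{\Uman_1,\dots,\Uman_p\}$ induced by $\dif$ squares to the identity, i.e., it is an involution.

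It remains to check that this involution is free. If $\dif(\Uman_j)=\Uman_j$ for some $j$, then $\dif(\gamma_j)=\dif(\partial\Uman_j\cap\partial\Zman)=\partial\Uman_j\cap\partial\Zman=\gamma_j$, contradicting Lemma~\ref{lm:dZ_prop} (the implication \ref{enum:lm:dZ_prop:1}$\Rightarrow$\ref{enum:lm:dZ_prop:4}), since $\gamma_j\subset\Int\Mman$. Hence $\dif$ induces a free involution on the set of components of $\overline{\Mman\setminus\Zman}$, so $p$ is even, $p=2a$ with $a\geq1$, and the enumeration~\eqref{equ:Yij:2} follows as explained in the first paragraph. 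I expect the only non-routine step to be the third paragraph, where Lemma~\ref{shift} is combined with the bijection coming from~\ref{cond:B}: all the rest is elementary topology of $\Zman$ and its complement, but it is precisely property~\ref{cond:B} — the bijection between the interior boundary circles of $\Zman$ and the complementary components — that lets the $\dif^2$-invariance of the circles $\gamma_j$ be promoted to $\dif^2$-invariance of the individual components $\Uman_j$.
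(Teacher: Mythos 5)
Your proof is correct and follows essentially the same route as the paper: use Lemma~\ref{shift} to write $\dif^2|_{\Zman}=\flow_{\alpha}$, deduce $\dif^2(\gamma_i)=\gamma_i$ and hence $\dif^2(\Yman_i)=\Yman_i$, use Lemma~\ref{lm:dZ_prop} to get $\dif(\gamma_i)\cap\gamma_i=\varnothing$ and hence $\dif(\Yman_i)\ne\Yman_i$, and conclude that $\dif$ acts as a free involution on the set of complementary components, which therefore splits into pairs. Your write-up just spells out a few details the paper leaves implicit (the $\dif$-equivariance of the bijection $\gamma\mapsto\Uman(\gamma)$ coming from property~\ref{cond:B}, and the argument that $\dif(\Zman)=\Zman$), and your bound $a\geq1$ is in fact what the later application of Lemma~\ref{lm:gm_in_Sid} requires, whereas the paper's ``$a>1$'' appears to be a slip.
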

\begin{proof}
Let $\Yman_1,\ldots,\Yman_q$ be all the connected components of $\overline{\Mman\setminus\Zman}$.
Denote $\gamma_i := \Yman_i \cap \Zman$.
Then by condition~\ref{cond:B}, $\gamma_i$ is a unique common boundary component of $\Yman_i$ and $\Zman$.

Since $\dif(\Zman)=\Zman$, it follows that $\dif$ induces a permutation of connected components of $\{\Yman_i\}_{i=1,\ldots,q}$.
Moreover, by Lemma~\ref{lm:dZ_prop} $\dif(\gamma_i) \cap \gamma_i = \varnothing$, whence $\dif(\Yman_i) \cap \Yman_i = \varnothing$ as well.
On the other hand, $\dif^2 = \flow_{\alpha}$, whence $\dif^2(\gamma_i) = \flow_{\alpha}(\gamma_i) = \gamma_i$, and therefore $\dif^2(\Yman_i) =\Yman_i$.

Thus $\{\Yman_i\}_{i=1,\ldots,q}$ splits into pairs which are exchanged by $\dif$.
\end{proof}

Now it is enough to apply Lemma~\ref{lm:gm_in_Sid} with $m=2$. Then there exists $\gdif$ such that $\gdif=\dif$ on $\Zman$ and $\gdif^2 \in\StabilizerId{\func}$. Notice that each component of $\Zman$ contains at least one saddle point of $\func$, otherwise by Lemma \ref{lm:chp_cyl_prop} \ref{saddle_p} we have that $M=\Zman$.
So the first condition \ref{enum:lm:gm_in_Sid:1} of Lemma~\ref{lm:gm_in_Sid} holds. The second condition \ref{enum:lm:gm_in_Sid:3} follows from Lemma \ref{shift} and the third condition \ref{enum:lm:gm_in_Sid:2} follows from Lemma \ref{pairs}.
Theorem~\ref{th:exist:g_rev_or} is completed.
\qed

\bibliographystyle{pigc_plain}
\bibliography{disk_ch_or}
\printArticleAuthorsInfo{\thearticlesnum}

\end{document}